\numberwithin{equation}{section}
\theoremstyle{plain}
\newtheorem{thm}{Theorem}[section]
\newtheorem{prop}[thm]{Proposition}
\newtheorem{lem}[thm]{Lemma}
\newtheorem{cor}[thm]{Corollary}
\newtheorem{conj}[thm]{Conjecture}
\newtheorem{question}[thm]{Question}
\theoremstyle{definition}
\newtheorem{rem}[thm]{Remark}
\newtheorem{claim}[thm]{Claim}
\newcommand{\N}{{\Bbb N}}
\newcommand{\Z}{{\Bbb Z}}
\newcommand{\Q}{{\Bbb Q}} 
\newcommand{\C}{{\Bbb C}}
\newcommand{\F}{{\Bbb F}}
\newcommand{\Spec}{{\mathrm{Spec}}\,}
\newcommand{\Spf}{{\mathrm{Spf}}\,}
\newcommand{\lra}{\longrightarrow}
\newcommand{\ra}{\rightarrow}
\newcommand{\hra}{\hookrightarrow}
\newcommand{\lla}{\longleftarrow}
\newcommand{\wt}[1]{\widetilde{#1}}
\newcommand{\ol}[1]{\overline{#1}}
\newcommand{\os}{\overset}
\newcommand{\dR}{{\mathrm{dR}}}
\newcommand{\crys}{{\mathrm{crys}}}
\newcommand{\conv}{{\mathrm{conv}}}
\newcommand{\Hom}{{\mathrm{Hom}}}
\newcommand{\Ker}{{\mathrm{Ker}}}
\newcommand{\im}{{\mathrm{Im}}}
\newcommand{\Coker}{{\mathrm{Coker}}}
\newcommand{\id}{{\mathrm{id}}}
\newcommand{\Coh}{{\mathrm{Coh}}}
\newcommand{\an}{{\mathrm{an}}}
\newcommand{\cD}{{\cal D}}
\newcommand{\cE}{{\cal E}}
\newcommand{\cM}{{\cal M}}
\newcommand{\cO}{{\cal O}}
\newcommand{\cS}{{\cal S}}
\newcommand{\cU}{{\cal U}}
\newcommand{\cX}{{\cal X}}
\newcommand{\wh}{\widehat}
\renewcommand{\wt}{\widetilde}
\newcommand{\et}{{\rm \acute{e}t}}
\newcommand{\MIC}{{\mathrm{MIC}}}
\newcommand{\qn}{{\mathrm{qn}}}
\newcommand{\HIG}{{\mathrm{HIG}}}
\newcommand{\Crys}{{\mathrm{Crys}}}
\newcommand{\Conv}{{\mathrm{Conv}}}
\begin{document}
\title{Convergent isocrystals on simply connected varieties}
\author{H\'el\`ene Esnault\footnote{
Freie Universit\"at Berlin, Arnimallee 3, 14195, Berlin, Germany; partly supported by the Einstein program and the ERC
Advanced
Grant 226257. } 
\,\,\, and \,\, 
Atsushi Shiho\footnote{
Graduate School of Mathematical Sciences, 
the University of Tokyo, 3-8-1 Komaba, Meguro-ku, Tokyo 153-8914, Japan;  partly supported by JSPS 
Grant-in-Aid for Scientific Research (C) 25400008, (B) 23340001 and (A)15H02048.} 
\setcounter{footnote}{-1}
\footnote{Mathematics Subject Classification (2010): 14F10, 14D20.}}
\date{\today}
\maketitle

\begin{abstract}
It is conjectured by de Jong that, 
if $X$ is a connected smooth projective  variety over an algebraically closed 
field $k$ of characteristic $p>0$ with trivial \'etale fundamental group, 
any isocrystal
on $X$ is constant. We prove this conjecture under certain additional assumptions. 
\end{abstract}

\tableofcontents

\section*{Introduction}
The fundamental group is an important invariant in topology, algebraic geometry and arithmetic geometry. For a complex connected 
smooth projective variety $X$,
the topological fundamental group $\pi_1(X)$ (based at some point), which classifies all the coverings of $X$, is defined in a topological, non-algebraic way. But there are (at least) two approaches to define the fundamental group of $X$ in an algebraic way. 
One is the \'etale fundamental group $\pi_1^{\et}(X)$ \cite[V] {SGA1} (based at some geometric point), which classifies all the finite \'etale coverings of $X$. It is isomorphic to the profinite completion of $\pi_1(X)$. 
Another one is the category of $\cO_X$-coherent $\cD_X$-modules, which is equivalent to the category of finite dimensional complex linear representations of $\pi_1(X)$ via the Riemann-Hilbert correspondence. As for the relation between these two algebraic approaches, 
Mal\v{c}ev \cite{malcev} and Grothendieck  \cite{grothendieck} proved that, if $\pi_1^\et(X)=\{1\},$  then there are 
no non-constant $\cO_X$-coherent $\cD_X$-modules. 

\medskip 

This leads to the question of an analog for a connected smooth projective variety $X$ 
over an algebraically closed field $k$ of characteristic $p>0$, namely, the question to 
compare the \'etale fundamental group $\pi_1^{\et}(X)$ of $X$ and the category of  $\cO_X$-coherent $\cD$-modules on $X$. Due to the absence of the topological fundamental group 
of $X$, the relation between them is more mysterious. 

\medskip

One issue to precisely formulate the question above is that 
there are many versions of $\cD$-modules  which are defined on $X$. 
One can consider the full ring of differential operators, or 
the ring of PD-differential operators. One can consider with or without 
thickenings to the Witt ring $W$ of $k$, and one can impose 
various nilpotence or convergence conditions.\footnote{Also, 
we can consider $\cD$-modules on $X$ with Frobenius 
structure. In this case, a $p$-adic analogue of the Riemann-Hilbert 
correspondence, which gives an equivalence between the category of $p$-adic representations of $\pi_1^{\et}(X)$ and the category of unit-root convergent $F$-isocrystals on $X$,
is known by Crew \cite{crew}.  Although it is also interesting to consider the 
case with Frobenius structure, we concentrate to the case without Frobenius structure 
in this article.
}
\medskip 

When we consider the full ring of differential operators $\cD_X$ on $X$ 
in usual sense (without any thickenings to $W$), 
the category of $\cO_X$-coherent 
$\cD_X$-modules is equivalent to the category 
${\rm Inf}(X/k)$ of crystals of finite 
presentation on the infinitesimal site $(X/k)_{\rm inf}$ of $X$ over $k$. 
In this case, Gieseker \cite{Gie75} conjectured in 1975 
that, on a connected smooth projective variety $X$ over an algebraically closed 
field $k$ of characteristic $p>0$ with $\pi_1^\et(X)=\{1\}$, there are no non-constant 
$\cO_X$-coherent $\cD_X$-modules. 
This conjecture was answered affirmatively in \cite[Thm.~1.1]{esnaultmehta}.

\medskip 

When we consider the full ring of differential operators on $X$ 
with thickenings to $W$, we obtain  
the category ${\rm Inf}(X/W)$ of crystals of finite 
presentation on the infinitesimal site $(X/W)_{\rm inf}$ of $X$ over $W$ 
(see Section \ref{sec:prel} for the definition).  
This is a $W$-linear category which lifts ${\rm Inf}(X/k)$. 
Because this category contains $p$-power torsion objects which cannot be 
constant even when $\pi_1^{\et}(X) = \{1\}$, it is natural to consider 
the $\Q$-linearization ${\rm Inf}(X/W)_{\Q}$, which we call the category 
of infinitesimal isocrystals. 
This category is known to be too small to capture all the geometric objects, 
but it is still an interesting category because it contains the geometric 
objects coming from finite \'etale coverings of $X$. 

\medskip 

The category of (certain) modules on the ring of PD-differential operators 
on $X$ with thickenings to $W$ and quasi-nilpotence condition is studied
most extensively, which is defined as    
the category ${\rm Crys}(X/W)$ of crystals of finite 
presentation on the crystalline site $(X/W)_{\rm crys}$ of $X$ over $W$ 
(see Section \ref{sec:prel} for the definition). As before, it is natural to consider 
the $\Q$-linearization ${\rm Crys}(X/W)_{\Q}$, which we call the category 
of isocrystals. 

\medskip 

After \cite{esnaultmehta}, de Jong conjectured in 2010 that, on a connected smooth projective variety $X$ over an algebraically closed field $k$ of characteristic $p>0$ with $\pi_1^\et(X)=\{1\}$, there are no non-constant isocrystals on $X$. 

\medskip

In this article, we consider the conjecture of de Jong for 
a closely related and slightly smaller category 
$\Conv(X \allowbreak /K)$ (where $K$ is the fraction field of $W$), the category 
of convergent isocrystals on $X$ over $K$, which is introduced by 
Ogus \cite[Defn.~2.7]{ogus1}. This corresponds to 
the category of (certain) modules on the ring of differential operators 
on $X$ with thickenings to $W$, tensorization with $\Q$ and the convergence condition, which is slightly 
stronger than the quasi-nilpotence condition. 
Although the category $\Conv(X/K)$ is slightly smaller than 
${\rm Crys}(X/W)_{\Q}$, it is large enough to contain 
the objects coming from geometry 
(e.g., the Gau\ss-Manin convergent isocrystals 
defined by Ogus \cite[Thm.~3.7]{ogus1})
and enjoys nice topological properties 
such as proper descent. Over an algebraically closed field,  
it shares many properties with  the category of lisse $\bar{ \mathbb{Q}}_\ell$-sheaves.

\medskip 

The conjecture of de Jong (for convergent isocrystals) 
is not trivial even when 
$X$ is liftable to a smooth projective scheme 
$X_W$ over $\Spec W$, because the \'etale fundamental 
group of the geometric generic fiber of $X_W$ need not be 
trivial. On the other hand, for 
a proper smooth morphism $f: Y \lra X$ which is 
liftable to a proper smooth morphism 
$f_W: Y_W \lra X_W$, we can prove 
the constancy of the Gau\ss-Manin convergent isocrystal 
$R^if_{\conv *} \cO_{Y/K}$ \cite[Thm.~3.7]{ogus1} 
rather easily, in the following way.
If we denote by $f_L: Y_L \lra X_L$ the base change of 
$f_W$ to a field $L$ containing $W$,  it suffices to prove 
the constancy of $R^if_{K, \dR *}\cO_{Y_K}$ 
as a module with an integrable connection, 
by \cite[Thm.~3.10]{ogus1}. 
Then we may assume that 
$K \subseteq \C$ and it suffices to prove the constancy of 
$R^if^{\an}_{\C,*}\C_{Y^{\an}_\C}$, where 
$f^{\an}_{\C}: Y^{\an}_{\C} \lra X^{\an}_{\C}$ is the 
analytification of $f_{\C}$. 
This is reduced to the constancy of  
$R^if_{\C,*}\Q_{\ell}$ for a prime $\ell \not= p$
by Artin's comparison theorem 
\cite[XVI,4]{SGA4}, and reduces to 
the constancy of $R^if_{W,*}\Q_{\ell}$, which 
is true by 
Grothendieck's base change  theorem on the \'etale fundamental group $\{1\} = \pi^{\et}_1(X) \xrightarrow{\cong } \pi^{\et}_1(X_W)$ \cite[Thm.~18.1.2]{EGAIV4}. 
Thus we find this conjecture interesting enough. 

\medskip 

Our main result is a partial solution to the conjecture of de Jong for 
convergent isocrystals, which is stated as follows. 

\begin{thm} \label{thmintro1}
 Let $X$ be a connected smooth projective  variety over $k$ with 
trivial \'etale fundamental group. Then
\begin{itemize}
\itemsep0em 
\item[(1)] any convergent isocrystal which is filtered so that the associated graded is a sum of rank $1$   
convergent isocrystals, is constant;
\item[(2)] if  the maximal slope of the sheaf of $1$-forms on $X$ is non-positive, 
then any convergent isocrystal is constant.
\end{itemize}
\end{thm}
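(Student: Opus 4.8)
The plan is to reduce both parts to characteristic $p$ and to combine three inputs. First, $\pi_1^{\et}(X)=\{1\}$ forces $b_1(X)=0$, hence $H^1_{\crys}(X/W)\otimes_W K=0$; this gives $\Ext^1_{\Conv(X/K)}(\cO_{X/K},\cO_{X/K})\cong H^1_{\conv}(X/K,\cO_{X/K})=0$, and it also supplies the rigidity we need in order to ascend from characteristic $p$: a convergent isocrystal whose reduction modulo $p$ becomes trivial (equivalently, becomes a trivial stratified bundle) underlies the constant isocrystal, since the successive crystalline deformation obstructions are built out of $H^1_{\crys}(X/W)$ and hence vanish rationally. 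Second, $\pi_1^{\et}(X)=\{1\}$ lets us apply the theorem of Esnault--Mehta \cite{esnaultmehta} on Gieseker's conjecture: on $X$ every stratified bundle (every $\cO_X$-coherent $\cD_X$-module) is trivial. Third, for a convergent isocrystal $E$ we choose a crystal $\cE$ on $(X/W)_{\crys}$ with $\cE\otimes\Q\cong E$ and reduce it modulo $p$, getting a bundle $\ol{\cE}$ with integrable connection $\ol{\nabla}$ on $X/k$; the crystalline Chern--Weil argument makes the Chern classes of $\ol{\cE}$ numerically trivial, so by Langer's theory $\ol{\cE}$ is strongly semistable of degree $0$, and the convergence of $E$ forces the $p$-curvature $\psi_{\ol{\nabla}}$ to be nilpotent. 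The strategy in each case is to produce, in characteristic $p$, a Frobenius descent to a trivial stratified bundle, and then to ascend.

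\smallskip

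\emph{Rank one, and part (1).} For a rank one convergent isocrystal $\cL$, its isomorphism class is governed on one hand by its underlying (numerically trivial) line bundle and on the other by a connection class lying in $H^1_{\conv}(X/K,\cO_{X/K})=0$; and since $\pi_1^{\et}(X)=\{1\}$ kills the prime-to-$p$ torsion line bundles (they would produce \'etale covers) and forces $\mathrm{Pic}^0(X)(k)=0$, a direct analysis of the convergence condition in characteristic $p$ (which rules out the remaining $p$-power torsion line bundles carrying a convergent connection) yields $\cL\cong\cO_{X/K}$, i.e.\ $\cL$ is constant. For part (1), let $E$ carry a filtration with rank one successive quotients; each quotient is constant by the above, and since $\Ext^1_{\Conv(X/K)}(\cO_{X/K},\cO_{X/K})=0$ the extensions split step by step, whence $E\cong\cO_{X/K}^{\oplus\rk E}$ is constant.

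\smallskip

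\emph{Part (2).} Assume the maximal slope of $\Omega^1_X$ is $\le 0$, and reduce $E$ modulo $p$ to $(\ol{\cE},\ol{\nabla})$ as above. Because $\Omega^1_X$ has non-positive maximal slope, a connection moves the Harder--Narasimhan filtration only by the maximal slope of $\Omega^1_X$, so the top HN piece of $\ol{\cE}$ is $\ol{\nabla}$-stable, hence a subcrystal; lifting it rationally exhibits it as the reduction of a sub-convergent-isocrystal of $E$, whose underlying bundle is therefore numerically trivial, forcing that piece to have degree $0$. Since the maximal HN slope of the degree-$0$ bundle $\ol{\cE}$ is thus $0$, $\ol{\cE}$ is semistable, hence strongly semistable of degree $0$, and so is $\End(\ol{\cE})$; consequently $\End(\ol{\cE})\otimes F^{*}\Omega^1_X$ has non-positive maximal slope, and its global section $\psi_{\ol{\nabla}}$ is either $0$ or, being nilpotent, has a proper nonzero $\ol{\nabla}$-stable kernel. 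In the latter case one lifts this subcrystal to a proper sub-convergent-isocrystal of $E$ and concludes by induction on the rank (the base case being rank one); in the former case, Cartier descent together with the convergence condition exhibits $\ol{\cE}$ as a stratified bundle, which is trivial by Esnault--Mehta, so $E$ is constant by the ascent principle.

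\smallskip

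\emph{Main obstacle.} The slope bookkeeping and the formal consequences of $b_1=0$ are routine; the real content is threefold. (i) The two characteristic-$p$ inputs: Langer's theory, used to pass to strong semistability and thereby control maximal slopes under tensor products and Frobenius pullbacks, and the convergence condition, used both to force $\psi_{\ol{\nabla}}$ nilpotent and to upgrade its vanishing to a genuine Frobenius descent, i.e.\ to a stratified bundle. (ii) The rank one subcase, where one must really exclude nontrivial $p$-power torsion line bundles carrying a convergent connection. (iii) The ascent steps --- from a trivial stratified bundle, and from the characteristic-$p$ subcrystals produced along the way, back to $E$ --- where the vanishing of $H^1_{\crys}(X/W)_{\Q}$ must be turned into actual isomorphisms and actual sub-convergent-isocrystals; here the delicate point is that lifting a subobject is not controlled by $H^1$ of the unit alone, so one must use the full structure of the convergence. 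I expect (iii), together with the borderline situation in part (2) where $\Omega^1_X$ has a semistable degree-$0$ piece and the kernel of a nonzero $p$-curvature must still be lifted to $E$, to be the hardest part.
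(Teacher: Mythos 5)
There are genuine gaps, and they sit exactly at the places you yourself flag as ``the real content.'' The most serious one is your ascent principle. You justify it by saying the successive crystalline deformation obstructions ``are built out of $H^1_{\crys}(X/W)$ and hence vanish rationally.'' They are not: the set of lifts of the constant crystal from $\Crys(X/W_n)$ to $\Crys(X/W_{n+1})$ is a torsor-type set measured by $H^1_{\crys}(X/k)^{r^2}$ (Proposition~\ref{prop:e}), a $k$-vector space in which ``rational vanishing'' has no meaning and which does \emph{not} vanish under $\pi_1^{\et}(X)=\{1\}$ (e.g.\ $H^0(X,\Omega^1_X)$ and $H^1(X,\cO_X)$ may well be nonzero); and you cannot divide by $p$ at each step, since you must retain an integral lattice. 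What actually makes the ascent work in the paper is that $F^*$ is nilpotent on $H^1_{\crys}(X/k)$ (its semisimple part vanishes because $\Hom(\pi_1^{\et}(X),\F_p)=0$), combined with the $F^n$-divisibility of the convergent isocrystal (Proposition~\ref{fdivision}) so that the obstruction can be killed by pulling back a division (Propositions~\ref{prop:d}, Corollary~\ref{cor:d}); even then one only obtains, for each $n$, a \emph{different} lattice constant mod $p^n$, and a Mittag--Leffler argument on $H^0_{\crys}$ together with Proposition~\ref{rem:eo}(2) is needed to conclude constancy of $\cE$ itself (proof of Theorem~\ref{thm1}). None of this is supplied by $b_1=0$ alone.

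The characteristic-$p$ part of your argument has the same kind of hole. You pass from ``$p$-curvature zero plus convergence'' to ``$\ol{\cE}$ is a stratified bundle,'' and repeatedly ``lift'' char-$p$ subcrystals (the top HN piece, the kernel of the $p$-curvature) to sub-convergent-isocrystals of $E$. But lattices are not Frobenius divisible, the mod-$p$ values of lattices of the divisions $\cE^{(n)}$ are not iterated Cartier descents of $\ol{\cE}$ (they agree with it only up to isogeny), and subobjects of $\ol{\cE}$ in $\Crys(X/k)$ do not lift to subobjects of $\cE$; so neither the stratified structure nor the induction step exists as stated. The paper avoids both issues by a different mechanism: Jordan--H\"older constituents are taken in $\Conv(X/K)$ itself, a crystalline Langton theorem (Proposition~\ref{prop:langton}) produces a lattice whose value is $\mu$-semistable as a crystal, the slope hypothesis upgrades this to strong $\mu$-semistability of the sheaf (Proposition~\ref{prop:mr}), Chern classes vanish (Proposition~\ref{prop:chern}), and then a uniform noetherianity bound on moduli of $\chi$-stable sheaves plus Esnault--Mehta (Propositions~\ref{prop:a}, \ref{prop:b}, \ref{prop:c}) replaces the nonexistent literal Frobenius-descent tower by chains of Frobenius pullbacks of bounded length taken across all divisions $\cE^{(n)}$. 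Finally, your rank-one case is asserted rather than proved: ruling out $p$-power torsion classes carrying a convergent structure is precisely where the infinite Frobenius divisibility must be used quantitatively (the paper does it either through the full Theorem~\ref{thm1} machinery, or for $p\ge 3$ via the nilpotent-crystalline-site computation of Proposition~\ref{rem:eo}(1) together with the construction of locally free rank-one lattices in Proposition~\ref{prop:rk1lattice}). The extension-splitting step via $H^1_{\conv}(X/K,\cO_{X/K})=0$ is the one ingredient of your sketch that matches the paper and is correct as stated.
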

We refer to Theorem~\ref{thm2} (1) and Corollary~\ref{rk1negativeslope} for this formulation. 
Theorem~\ref{thm2} is more precisely formulated: 
In (2), positive slopes of  the sheaf of $1$-forms on $X$ are allowed, 
according to the maximal rank of the irreducible constituents of 
the given convergent isocrystal. 

\medskip

We now explain the main ideas of the proof.  Convergent isocrystals are known to be Frobenius divisible, although $p$-torsion free crystals in one isocrystal class 
(called {\it lattices} of an isocrystal) is not. Using this, one proves in Proposition~\ref{prop:chern} that the Chern classes of the value $E_X$ on $X$ of a crystal $E$ over $W$ vanish when $E$ is a lattice of a convergent isocrystal. 

\medskip 

If one assumes in addition that $E_X$ is strongly $\mu$-semistable, one sees that 
the subquotients associated to some filtration of $E_X$ 
yield points in the moduli of $\chi$-stable sheaves with vanishing Chern classes. Then, assuming now that $X$ has trivial fundamental group, it is proved in  Propositions~\ref{prop:a}, \ref{prop:b}, \ref{prop:c} 
by a noetherianity argument,  that not only infinitely Frobenius divisible sheaves are constant (Gieseker's conjecture proved in \cite{esnaultmehta}), but also strongly $\mu$-semistable ones with vanishing Chern classes which admit a large enough Frobenius divisibility. 

\medskip 

This, together with the crystalline deformation theory 
in Propositions~\ref{prop:e}, \ref{prop:d}, Corollary~\ref{cor:d}
which allows to prove the constancy modulo $p^n$ from that 
on $X$, leads to the theorem (see Theorem~\ref{thm1}).
\begin{thm} \label{thmintro2}
Let $X$ be a connected smooth projective variety over $k$ with
trivial  \'etale fundamental group and let $\cE$ be a convergent 
isocrystal. If, for any $n \in \N$, 
the $F^n$-division $\cE^{(n)}$ of $\cE$ 
admits a lattice such that 
its value on $X$ as a coherent $\cO_X$-module is
strongly $\mu$-semistable, then $\cE$ is trivial. 
\end{thm}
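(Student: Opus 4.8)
The plan is to assemble the three ingredients announced in the introduction. Write $r = \rk \cE$. Throughout one uses that a convergent isocrystal is Frobenius divisible, so that each $F^n$-division $\cE^{(n)}$ is again a convergent isocrystal with $(F^n)^*\cE^{(n)} \cong \cE$; consequently, if $E^{(n)}$ is any lattice of $\cE^{(n)}$, then $(F^n)^*E^{(n)}$ is a lattice of $\cE$ whose value on $X$ is the $n$-th Frobenius pullback of $E^{(n)}_X$.

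First I would reduce, on the special fibre, to the trivial bundle. By Proposition~\ref{prop:chern}, the value on $X$ of any lattice of a convergent isocrystal has vanishing Chern classes; in particular, for every $n$ the sheaf $E^{(n)}_X$ supplied by the hypothesis is strongly $\mu$-semistable of rank $r$ with vanishing Chern classes. By Propositions~\ref{prop:a}, \ref{prop:b} and \ref{prop:c} --- whose proof combines the finiteness of the relevant moduli space of $\chi$-stable sheaves, a noetherianity argument for the descending chain of Frobenius-pullback images, and the theorem of \cite{esnaultmehta} at the stable limit --- there is an integer $N = N(X,r)$ such that every strongly $\mu$-semistable $\cO_X$-module of rank $r$ with vanishing Chern classes which is the $N$-th Frobenius pullback of such a sheaf is isomorphic to $\cO_X^{\oplus r}$ (Frobenius pullback preserving strong $\mu$-semistability and the vanishing of the Chern classes). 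Applying the hypothesis with $n = N$ and taking $E := (F^N)^*E^{(N)}$ as our lattice of $\cE$, we obtain $E_X \cong \cO_X^{\oplus r}$: the convergent isocrystal $\cE$ admits a lattice whose value on the special fibre is trivial.

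Next I would propagate triviality from $X$ to crystals over $W/p^{n}$ for all $n$ by induction, using the crystalline deformation theory together with the Frobenius structure. This step genuinely needs the divisibility, since lattices are not themselves Frobenius divisible and an arbitrary crystalline lift of $\cO_X^{\oplus r}$ over $W/p^{n}$ need not be trivial: the obstruction to triviality modulo $p^{n+1}$ of a lattice already trivial modulo $p^{n}$ is a class in a first cohomology group of $X$ with coefficients in the (trivial) reduction of the endomorphism sheaf, tensored with $p^{n}W/p^{n+1}W$. By Propositions~\ref{prop:e}, \ref{prop:d} and Corollary~\ref{cor:d}, carrying along at each level a suitable high Frobenius pullback of a lattice of $\cE^{(m)}$ exactly as in the first step (so as to keep simultaneously the semistability and the divisibility), this obstruction is divisible by arbitrarily high powers of the Frobenius pullback acting on that cohomology group, and is therefore $0$; hence the lattice is trivial modulo $p^{n+1}$ and the induction goes through.

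Passing to the inverse limit over $n$ then yields $\cE \cong \cO_{X/K}^{\oplus r}$, i.e. $\cE$ is trivial. I expect the second step to be the main obstacle: one must thread the non-canonical integral lattices through the whole induction while preserving both the strong $\mu$-semistability coming from the hypothesis and the Frobenius divisibility coming from convergence, and then exploit the interaction between the crystalline deformation obstruction and the Frobenius action to annihilate that obstruction --- this is precisely what Propositions~\ref{prop:e}, \ref{prop:d} and Corollary~\ref{cor:d} are designed to achieve, fed at every level by the first step.
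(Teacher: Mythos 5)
Your first step is right and follows the paper: Proposition~\ref{prop:chern} kills the Chern classes, and Propositions~\ref{prop:a}, \ref{prop:b}, \ref{prop:c} give a uniform $c=c(X,r)$ such that the lattice $(F^*)^cE^{(c)}$ of $\cE$ has constant value in $\Crys(X/k)$; likewise your use of Propositions~\ref{prop:e}, \ref{prop:d} and Corollary~\ref{cor:d} to get constancy modulo $p^n$ is the intended mechanism. The genuine gap is your last sentence of the argument, ``passing to the inverse limit over $n$ then yields $\cE\cong\cO_{X/K}^{\oplus r}$.'' What the deformation step actually produces is, for each $n$, a lattice $G^{(n)}:=(F^*)^{c+d(n-1)}E^{(c+d(n-1))}$ of $\cE$ whose restriction to $\Crys(X/W_n)$ is constant; the amount of Frobenius twisting needed grows with $n$, so these are \emph{different} lattices of $\cE$ for different $n$, they do not form a projective system, and the trivializations modulo $p^n$ are not compatible as $n$ varies. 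There is therefore no inverse limit to pass to, and no single lattice has been shown constant modulo every $p^n$. You cannot ``carry along'' one lattice through the induction either: a fixed lattice is not Frobenius divisible, which is exactly why the statement quantifies over all $F^n$-divisions.

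The paper closes this gap with a separate argument that your proposal does not contain. One fixes the single lattice $E:=G^{(1)}$ (so $E_X$ is constant), rescales each $G^{(n)}$ so that $G^{(n)}\subseteq E$, $G^{(n)}\not\subseteq pE$, and uses the constancy of $G^{(n)}_n$ to show that for every $m$ and all $n\geq m$ the map $H^0_{\crys}(X/W_n,E_n)\to H^0_{\crys}(X/W_m,E_m)$ is nonzero; since $W_m$ is Artinian the images stabilize, the system satisfies Mittag--Leffler, and $H^0_{\crys}(X/W,E)$ is a nonzero free $W$-module of some rank $s\leq r$. The case $s<r$ is then excluded by forming the quotient crystal $Q=E/\bigl(H^0_{\crys}(X/W,E)\otimes_W\cO_{X/W}\bigr)$, showing it is $p$-torsion free with $H^0_{\crys}(X/W,Q)\neq 0$ by the same comparison with the $G^{(n)}$, and contradicting this via the vanishing $H^1_{\crys}(X/W,\cO_{X/W})=0$ of Proposition~\ref{rem:eo}\,(2), which uses the triviality of $\pi_1^{\et}(X)$ a second time. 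This $H^0$/$H^1$ bookkeeping with a fixed lattice against the moving family $G^{(n)}$ is the essential missing ingredient; without it the statement ``for every $n$ some lattice of $\cE$ is constant modulo $p^n$'' does not by itself yield the constancy of $\cE$.
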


Also, 
one proves a Langton type theorem in Proposition~\ref{prop:langton}, claiming 
the existence of a lattice whose restriction to a crystal on $X$ over $k$ is 
$\mu$-semistable. One proves 
Theorem~\ref{thmintro1}, together with its refinements not discussed in the introduction,  using Theorem~\ref{thmintro2} and the slope condition on the sheaf of $1$-forms which forces the requested stability conditions (see Proposition~\ref{prop:mr}).

\medskip 

As another consequence of Theorem~\ref{thmintro2}, 
we have the following corollary, which confirms the conjecture of de Jong 
for infinitesimal isocrystals (see Corollary~\ref{cor}). 

\begin{cor} \label{corintro}
Let $X$ be a connected smooth projective variety over $k$ with 
trivial \'etale fundamental group. Then any infinitesimal isocrystal on 
$X$ is constant. 
\end{cor}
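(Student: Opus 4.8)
The plan is to derive Corollary~\ref{corintro} from Theorem~\ref{thmintro2} by showing that every infinitesimal isocrystal on $X$ gives rise to a convergent isocrystal satisfying the hypothesis of that theorem, in a somewhat trivial way. The key point is that an infinitesimal isocrystal is, in an appropriate sense, a much more rigid object than a general isocrystal: the infinitesimal site $(X/W)_{\inf}$ has a stronger topology than the crystalline or convergent sites, so there is a natural functor from $\mathrm{Inf}(X/W)_{\Q}$ to $\Conv(X/K)$ (compatibly with values on $X$), and the essential image lands in a class of isocrystals for which the $F$-divisibility hypothesis is automatic. Concretely, I would first recall that an object of $\mathrm{Inf}(X/W)_{\Q}$ is represented by a crystal $E$ on the infinitesimal site, whose value $E_X$ on $X$ carries not merely a quasi-nilpotent connection but a stratification in the full (non-PD) sense, i.e.\ an action of the full ring of differential operators. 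I would then show this forces $E_X$, and all its $F^n$-pullbacks, to be extremely constrained.

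The central observation to exploit is that for an infinitesimal isocrystal, \emph{every} term in \emph{every} Frobenius division inherits an infinitesimal (stratified) structure, and a crystal on the infinitesimal site with locally free value has value which is already a direct sum of line bundles after a suitable filtration — in fact, more is true: I would argue that any lattice $E$ of an infinitesimal isocrystal has $E_X$ with a filtration whose graded pieces are \emph{stably} $\mu$-semistable, indeed with vanishing Chern classes, simply because an $\cO_X$-coherent $\cD_X$-module structure (the full $\cD_X$, not just the PD version) on the reduction $E_X \otimes k$ already implies by Gieseker--Esnault--Mehta type arguments that the Chern classes vanish and the relevant semistability holds. The cleanest route: for each $n$, the $F^n$-division of the associated convergent isocrystal $\cE$ again comes from an infinitesimal isocrystal (Frobenius divisibility is inherited on the infinitesimal site in an even stronger form), so it suffices to check the hypothesis of Theorem~\ref{thmintro2} once, uniformly in $n$. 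I would verify that a lattice of an infinitesimal isocrystal always admits a value on $X$ which is strongly $\mu$-semistable — this should follow from the fact that the full stratification descends the sheaf through all Frobenius pullbacks simultaneously, so strong $\mu$-semistability reduces to ordinary $\mu$-semistability of a single sheaf carrying an integrable connection, which holds because such a sheaf has numerically trivial determinant and its Harder--Narasimhan filtration is horizontal hence (on the simply connected $X$) trivial.

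With that in place, the proof is short: given an infinitesimal isocrystal $\cF$, let $\cE \in \Conv(X/K)$ be its image under the comparison functor $\mathrm{Inf}(X/W)_{\Q} \to \Conv(X/K)$; for every $n$, the $F^n$-division $\cE^{(n)}$ is the image of the $F^n$-division of $\cF$, which is again infinitesimal and hence admits a lattice whose value on $X$ is strongly $\mu$-semistable by the previous paragraph; thus Theorem~\ref{thmintro2} applies and $\cE$ is trivial; since the comparison functor is faithful and reflects the trivial object (both having the same value $\cO_X$ on $X$ with the canonical stratification), $\cF$ itself is constant. I would then remark that this recovers, and slightly strengthens in the $W$-linear direction, the Gieseker conjecture proved in \cite{esnaultmehta}, which handles the mod-$p$ case.

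The main obstacle I expect is the bookkeeping in the second paragraph: carefully setting up the comparison functor $\mathrm{Inf}(X/W)_{\Q} \to \Conv(X/K)$ and, more delicately, checking that Frobenius divisibility of a convergent isocrystal in the image is \emph{induced} by a divisibility already present at the infinitesimal level — a priori an $F^n$-division exists only in $\Conv(X/K)$ and one must argue it refines to $\mathrm{Inf}(X/W)_{\Q}$, or else directly produce, for each $n$, a strongly $\mu$-semistable lattice of $\cE^{(n)}$ without passing back to the infinitesimal site. If the latter is needed, the argument is: the value on $X$ of any lattice of an infinitesimal isocrystal carries a full $\cD_X$-module structure on its reduction mod $p$, hence is a successive extension of sheaves that are Frobenius-pullbacks (by Cartier), and iterating shows every Frobenius division already has a lattice with $\cD$-module, hence $\mu$-semistable with trivial Chern classes, value; this is where one genuinely uses that the site is the infinitesimal one rather than the crystalline one. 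Everything else — applying Theorem~\ref{thmintro2} and descending triviality — is formal.
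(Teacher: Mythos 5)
Your overall strategy is the paper's: push the infinitesimal isocrystal into $\Conv(X/K)$ via the comparison functor, show every $F^n$-division admits a lattice whose value on $X$ is strongly $\mu$-semistable (in fact constant, by \cite{esnaultmehta}), and apply Theorem~\ref{thm1}. But the step you yourself flag as the ``main obstacle'' is exactly where the real content lies, and your fallback does not close it. What is needed is that the $F^n$-division $\cE^{(n)}$, which a priori only exists in $\Conv(X/K)$, is again of the form $\Q\otimes E^{(n)}$ with $E^{(n)}\in{\rm Inf}(X/W)$; equivalently, that $F^*\colon {\rm Inf}(X/W)\to{\rm Inf}(X/W)$ is an equivalence. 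This is Proposition~\ref{ber} in the paper, and it rests on Berthelot's theorem on Frobenius-divided modules in \emph{mixed} characteristic \cite[Thm.~2.1]{berthelotdivided}; it is a genuinely $p$-adic statement and not a formal consequence of Cartier descent. Your substitute argument --- that the value mod $p$ of a lattice carries a full $\cD_X$-module structure, hence is Frobenius-divided by Katz/Cartier, and that ``iterating'' produces, for each $n$, a lattice of $\cE^{(n)}$ with $\cD$-module value --- only divides the reduction mod $p$ as a (stratified) sheaf. It produces no crystal over $W$ whose $\Q$-linearization is $\cE^{(n)}$, so it does not yield the lattices that Theorem~\ref{thm1} requires; the divisibility must be established at the level of ${\rm Inf}(X/W)$ itself, which is Berthelot's theorem.

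Two secondary points. First, your route to strong $\mu$-semistability via ``the Harder--Narasimhan filtration is horizontal hence trivial'' is not valid in characteristic $p$ for a mere integrable connection: horizontality of the HN filtration fails in general, which is precisely why Proposition~\ref{prop:mr} needs hypotheses on $\mu_{\max}(\Omega^1_X)$. What actually works (and is what the paper does) is that once $E^{(n)}$ is known to be infinitesimal, its value $E^{(n)}_X$ is an object of ${\rm Inf}(X/k)$, hence \emph{constant} by \cite[Thm.~1.1]{esnaultmehta} since $\pi_1^{\et}(X)=\{1\}$, and constancy trivially gives strong $\mu$-semistability; no separate Chern-class or stability argument for stratified sheaves is needed. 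Second, to descend constancy from $\Conv(X/K)$ back to ${\rm Inf}(X/W)_{\Q}$ you need the comparison functor $\Phi'_{\Q}$ to be \emph{fully} faithful (as established in Section~\ref{sec:prel}), not merely faithful and ``trivial-object reflecting'': fullness is what transports the trivializing isomorphism and its inverse back to the infinitesimal category.
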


\medskip 

{\it Acknowledgement:} The first named author thanks  Johan de Jong for  inviting her in the fall of 2010 to lecture  on \cite{esnaultmehta} at Columbia University, after which he formulated the conjecture discussed here. She thanks
Arthur Ogus for subsequent fruitful discussions which solved
  the abelian case (see Proposition~\ref{rem:eo} for a precise statement), Yves Andr\'e for discussions on the  moduli points of  isocrystals in Simpson's  moduli of flat connections on a characteristic $0$ lift of $X$, Adrian Langer for enlightening discussions on  \cite[Thm.~5.1]{langerlie}. She thanks
 Takeshi Saito for offering hospitality in summer 2014  at the Graduate School of Mathematical Sciences, the University of Tokyo,  where this work was started (see \cite{Shi14}).  The second named author thanks
 Kirti Joshi for his suggestion to consider 
the condition $\mu_{\max}(\Omega^1_X) \leq 0$. Both authors thank one of the anonymous referees for gracious, constructive  and helpful comments.

\section{Preliminaries} \label{sec:prel}

In this section, we review some facts on (iso)crystals, infinitesimal (iso)crystals, convergent isocrystals, and Cartier transform of 
Ogus-Vologodsky.

\medskip

Throughout the article,  we fix
 an algebraically closed field $k$ of characteristic $p>0$. Let 
$W$ be the Witt ring of $k$ and 
$K$  be the fraction field of $W$. For $n \in \N$, put 
$W_n := W/p^nW$. Let $\sigma: k \lra k$ be the Frobenius map 
$a \mapsto a^p$ on $k$. Let $\sigma_W: W \lra W$ be the 
automorphism which lifts $\sigma$ and let $\sigma_K: K \lra K$ be 
the automorphism induced by $\sigma_W$. 

\medskip

First we summarize a few facts on (iso)crystals from \cite[Sections~5/6/7]{berthelotogus}, \cite[III/IV]{berthelotbook}.
For a scheme $X$ of finite type over $k$, let 
$(X/W)_{\crys}$ (resp. $(X/W_n)_{\crys}$) 
be the {\it crystalline site} 
on $X/W$ (resp. $X/W_n$). An object 
is a pair $(U \hra T, \delta)$, where 
$U \hra T$ is a closed immersion over $W_n$ for some $n$ 
(resp. over $W_n$) from an open subscheme $U$ of $X$ to a scheme $T$ and 
$\delta$ is 
a PD-structure on $\Ker(\cO_T \ra \cO_U)$, compatible 
with the canonical PD-structure on $pW_n$. Morphisms are the obvious ones. 
For the definition of coverings, see \cite[page~5.2]{berthelotogus}. 
The structure sheaf $\cO_{X/W}$ on $(X/W)_{\crys}$ (resp.   $\cO_{X/W_n}$ on $(X/W_n)_{\crys}$) 
is defined by the rule $(U \hra T, \delta) \mapsto \Gamma(T,\cO_T)$. 

\medskip

A sheaf $E$ of $\cO_{X/W}$-modules (resp. $\cO_{X/W_n}$-modules) on 
 $(X/W)_{\crys}$ (resp. on $(X/W_n)_{\crys}$) is equivalent to the datum of a sheaf of 
 $\cO_T$-modules $E_T$ in the Zariski topology of $T$ for each object $T := (U \hra T, \delta)$, and of
an $\cO_{T'}$-linear morphism $\varphi^*E_{T} \ra E_{T'}$ for each 
morphism $\varphi: (U' \hra T', \delta') \ra (U \hra T, \delta)$, which is an isomorphism when $\varphi:T' \ra T$ is an open immersion and 
$U' $ is equal to $U \times_T T'$. The sheaf $E_T$ is called the {\it value} of $E$ at $T$. 
Via the module structure of $\cO_T$ over itself, $\cO_{X/W}$ (resp. $\cO_{X/W_n}$) is a sheaf of $\cO_{X/W}$-modules (resp. $\cO_{X/W_n}$-modules).
 
 \medskip
 
A sheaf $E$ of $\cO_{X/W}$-modules (resp. of $\cO_{X/W_n}$-modules) on 
$(X/W)_{\crys}$ (resp. on $(X/W_n)_{\crys}$) is a {\it crystal} if 
the morphisms $\varphi^*E_{T} \ra E_{T'}$ are all isomorphisms.  A crystal is 
 {\it  of finite presentation}   if its value $E_T$ is   an $\cO_T$-module of 
finite presentation    for any $(U \hra T, \delta)$. 
The {\it category of  crystals of finite presentation}  on 
$(X/W)_{\crys}$ (resp. on $(X/W_n)_{\crys}$)  is denoted by 
$\Crys(X/W)$ (resp. $\Crys(X/W_n)$), as a full subcategory of the category of sheaves of  $\cO_{X/W}$-modules (resp.  of $\cO_{X/W_n}$-modules). 
The structure sheaf $\cO_{X/W}$ (resp. $\cO_{X/W_n}$) is a crystal. 
It is known \cite[IV Prop. 1.7.6]{berthelotbook}
that $\Crys(X/W)$, $\Crys(X/W_n)$ are abelian categories. 
Furthermore, the categories 
$\Crys(X/W)$, $\Crys(X/W_n)$ satisfy the descent property 
for Zariski coverings of $X$, that is, crystals `glue' in the Zariski 
topology. 

\medskip 

 If we denote the topos associated to 
$(X/W)_{\crys}$ by $(X/W)_{\crys}^{\sim}$, it is 
functorial with respect to $X/W$, namely, if we have a commutative diagram 
\begin{equation}\label{eq:funct}
\begin{CD}
X' @>{f}>> X \\ 
@VVV @VVV \\ 
\Spf (W) @>{f'}>> \Spf (W)
\end{CD}
\end{equation}
with $X'$ of finite type over $k$, we have the canonical morphism of topoi 
$(X'/W)_{\crys}^{\sim} \lra (X/W)_{\crys}^{\sim}$ \cite[\S 5]{berthelotogus}. 
It induces the morphism of 
ringed topoi $((X'/W)_{\crys}^{\sim}, \cO_{X'/W}) \allowbreak \lra 
((X/W)_{\crys}^{\sim},\cO_{X/W})$ and the pullback functor 
$f^*: \Crys(X/W) \allowbreak \lra \Crys(X'/W)$. 
Similar functoriality holds also for 
the the ringed topos $((X/W_n)_{\crys}^{\sim}, \allowbreak \cO_{X/W_n})$ 
associated to $(X/W_n)_{\crys}$ and the category 
$\Crys(X/W_n)$. 

\medskip

The natural inclusion of sites 
$(X/W_n)_{\crys} \hra (X/W)_{\crys}$  induces 
the restriction functor 
\begin{equation}\label{rest}
\Crys(X/W) \lra \Crys(X/W_n); \,\,\, E \mapsto E_n. 
\end{equation}
Since $(X/W)_{\crys}$ is the $2$-inductive limit of the sites
$(X/W_n)_{\crys}$ \cite[page~7-22]{berthelotogus}, 
we have the equivalence 
\begin{equation}\label{proj}
\Crys(X/W) \os{\simeq}{\lra} \varprojlim_n \Crys(X/W_n); 
\,\,\, E \mapsto (E_n)_n. 
\end{equation}
The functors \eqref{rest}, \eqref{proj} are also functorial with respect to 
$X/W$.

\medskip 

For any object $ (U \hra T, \delta)$ in 
$(X/W)_{\crys}$ (resp. $(X/W_n)_{\crys}$), 
the {\it  functor of evaluation} at $T$ 
\begin{equation}\label{evt}
\Crys(X/W) \lra \Coh(\cO_T) \,\,\,\text{(resp. 
$\Crys(X/W_n) \lra \Coh(\cO_T)$)}; \,\,\, E \mapsto E_T 
\end{equation}
is defined, where $\Coh(\cO_T)$ denotes the  category of  sheaves of $\cO_T$-modules of finite type. It is known to be right exact. This follows from 
\cite[IV Rem.~1.7.8]{berthelotbook} and \cite[III Prop.~1.1.5]{berthelotbook}. 

\medskip

If $U \hra Y$ is a closed immersion from an open 
subscheme $U$ of $X$ into a smooth scheme $Y$ over $W_n$ and 
$T:= (U \hra T, \delta)$ is its PD-envelope \cite[page 3.20]{berthelotogus}, the functor 
\eqref{evt} is exact \cite[IV Prop. 1.7.5]{berthelotbook}. 
Moreover, we have an equivalence of categories lifting  
\eqref{evt}, which we explain now. The derivation 
$d: \cO_Y \ra \Omega^1_{Y}$ is extended canonically to 
a PD-derivation 
$d: \cO_{T} \ra \Omega^1_{T} := \cO_{T} \otimes_{\cO_Y} 
\Omega^1_{Y}$, and we have the notion of 
 $\cO_T$-{\it modules  of finite presentation 
with integrable connection on $T$ with respect to 
this PD-derivation} \cite[Section~4]{berthelotogus}. With the obvious morphisms, we denote the category of such objects by 
$\MIC(T)$, and denote the full subcategory consisting of 
quasi-nilpotent ones by $\MIC(T)^{\qn}$. 
(For the definition of the quasi-nilpotence, see 
\cite[Def.~4.10]{berthelotogus}.) 
For $T= (U \hra T, \delta)$ as above and $E \in \Crys(U/W_n)$, 
$E_T$ is naturally endowed with a quasi-nilpotent integrable connection 
$\nabla_{E_T} : E_{T} \ra E_{T} \otimes \Omega^1_{T}$, and 
we have a natural equivalence of abelian categories 
\begin{equation}\label{evt2}
\Crys(U/W_n) \os{\simeq}{\lra} 
\MIC(T)^{\qn}; \,\,\, E \mapsto (E_T, \nabla_{E_T}).  
\end{equation}

\medskip 

We use the functors \eqref{evt}, \eqref{evt2} in the following cases. 
 First, for a smooth variety $X$ over $k$, 
we have the right exact functors 
\begin{align}
& \Crys(X/W) \lra \Coh(\cO_X), \label{evxw} \\ 
& \Crys(X/W_n) \lra \Coh(\cO_X) \label{evxwn} 
\end{align} 
of evaluation at $X := (X \os{\id}{\hra} X, 0)$.
When $n=1$, the functor \eqref{evxwn} is exact. 
The functor \eqref{evxw} (resp. \eqref{evxwn}) 
is functorial with respect to $X/W$ (resp. $X/W_n$).

\medskip

Next, let $X$ be a smooth variety over $k$ and 
assume that we have a lifting of $X$ to a $p$-adic 
smooth formal scheme $X_W$ over $W$. As $X/k$ is smooth,  there always
exists such a lifting on affine open subschemes of $X$.  If we put 
$X_n := X_W \otimes_W W_n$, the evaluation 
at $X_n := (X \hra X_n, \text{canonical PD-structure on $p\cO_{X_n}$})$ 
induces the equivalence 
\begin{equation}\label{evpn}
\Crys(X/W_n) \os{\simeq}{\lra} \MIC(X_n)^{\qn}. 
\end{equation}
So we have an equivalence of categories 
\begin{align}
& \Crys(X/W) \os{\simeq}{\lra} \varprojlim_n \MIC(X_n)^{\qn} 
=: \MIC(X_W)^{\qn}; \label{evformal} \\ 
& E \mapsto \varprojlim_n (E_{X_n}, \nabla_{E_{X_n}}) =: 
(E_{X_W}, \nabla_{E_{X_W}}). \nonumber 
\end{align}
In addition, there exists a full embedding 
\begin{equation}\label{mic-d}
\MIC(X_W)^{\qn} \hra \text{($\cO_{X_W}$-coherent left $\wh{\cD}_{X_W/W}^{(0)}$-modules)}, 
\end{equation} 
where $\wh{\cD}_{X_W/W}^{(0)} := \varprojlim \cD_{X_n/W_n}^{(0)}$ and 
$\cD_{X_n/W_n}^{(0)}$ is the ring of PD-differential operators, by \cite[Thm.~4.8]{berthelotogus}. 
When there exists a local basis $x_1, ..., x_d$ of $X_W$ over $W$, 
$\wh{\cD}_{X_W/W}^{(0)}$ is topologically generated by 
$$\left(\dfrac{\partial}{\partial x}\right)^n := 
\left(\dfrac{\partial}{\partial x_1}\right)^{n_1} \left(\dfrac{\partial}{\partial x_2}\right)^{n_2} 
\cdots \left(\dfrac{\partial}{\partial x_d}\right)^{n_d} \quad (n := (n_1, ..., n_d) \in \N^d) $$
over $\cO_{X_W}$ \cite[Section 4]{berthelotogus}.

\medskip 

The functors \eqref{evformal}, \eqref{mic-d} are functorial with respect to $X_W/W$, 
namely, if there exists a diagram 
\begin{equation}\label{eq:functw}
\begin{CD}
X'_W @>{f_W}>> X_W \\ 
@VVV @VVV \\ 
\Spf (W) @>{f'}>> \Spf (W)
\end{CD}
\end{equation}
lifting \eqref{eq:funct}, the functor \eqref{evformal} is compatible with 
the pullback $f^*$ on the left hand side and the pullback $f_W^*$ 
on the right hand side. Also, the functor \eqref{mic-d} is compatible with 
the pullback $f_W^*$.

\medskip

We say that an object $(E_{X_W},\nabla_{E_{X_W}})$ (resp. $(E_{X_n},\nabla_{E_{X_n}})$) in $\MIC(X_W)^{\qn}$ (resp. in
$\MIC(X_n)^{\qn}$) is $p$-torsion if so is $E_{X_W}$ (resp.$ E_{X_n}$). 
Since the restriction functors
\begin{align*}
& \text{($p$-torsion objects in $\MIC(X_W)^{\qn}$)} \lra \MIC(X)^{\qn}, \\ 
& \text{($p$-torsion objects in $\MIC(X_n)^{\qn}$)} \lra \MIC(X)^{\qn} 
\end{align*}
are equivalences, we see by \eqref{evpn}, \eqref{evformal} and Zariski descent 
that the restriction functors 
\begin{align*}
& \text{($p$-torsion objects in $\Crys(X/W)$)} \lra \Crys(X/k), \\ 
& \text{($p$-torsion objects in $\Crys(X/W_n)$)} \lra \Crys(X/k)
\end{align*} 
are equivalences. 

\medskip

When $X$ is projective over $k$ and we are given a 
fixed closed $k$-immersion $\iota: X \hra \mathbb{P}_k^N$, 
we denote the PD-envelope of $X \os{\iota}{\hra} \mathbb{P}_k^N \hra 
\mathbb{P}_{W_n}^N$ by $D_n := (X \hra D_n, \delta_n)$. 
Then the equivalence of categories \eqref{evt2} becomes globally defined on $X$ \cite[Thm. 6.6]{berthelotogus}
\begin{equation}\label{evdn}
\Crys(X/W_n) \os{\simeq}{\lra} \MIC(D_n)^{\qn}; \,\,\, E \mapsto (E_{D_n}, \nabla_{E_{D_n}}). 
\end{equation}

\medskip

A crystal $E \in  \Crys (X/W)$ (resp. $\Crys(X/W_n)$) is called 
{\it locally free} if, for any object $(U\hookrightarrow T, \delta)$ in $(X/W)_{\crys}$ 
(resp. $\Crys(X/W_n)$), $E_T$ is locally free of finite rank. 

\medskip
A crystal $E \in \Crys(X/W)$ is  said to be $p$-{\it torsion free} if the multiplication by $p$ on $E$ is injective. 

\medskip

A crystal $E \in \Crys(X/W_n)$ is 
called  {\it flat} over $W_n$ if, for any $1 \leq i \leq n-1$, 
the morphism $E/p^{n-i}E \lra E$ induced by 
the multiplication by $p^i$ is an isomorphism onto its image $p^iE\subset E$. 

\medskip

When we have a lifting of $X$ to a $p$-adic smooth 
 formal scheme $X_W$ over $W$, $E \in  \Crys (X/W)$ is locally free 
(resp. $p$-torsion free) if and only if $E_{X_W}$ is 
locally free (resp. $p$-torsion free), and 
$E \in \Crys(X/W_n)$ is flat over $W_n$ if and only if $E_{X_n}$ is 
flat over $W_n$, where $X_n := X_W \otimes_W W_n$. 
Therefore, 
$E \in \Crys(X/W)$ is locally free if and only if  its value 
$E_X\in \Coh(X)$ is locally free, if and only if its 
restriction  to $\Crys(X/k)$ is locally free. 
Also, 
$E \in \Crys(X/W)$ is $p$-torsion free if and only if 
$E_n$ is flat over $W_n$ for each $n \in \N$, where 
$(E_n)_n \in \varprojlim_n \Crys(X/W_n)$ is the object 
corresponding to $E$ via \eqref{proj}. 

\medskip

For a smooth variety $X$ over $k$, 
let $\Crys(X/W)_{\Q}$ be the $\Q$-linearization of 
the category $\Crys(X/W)$, which is called the category of 
{\it isocrystals} on $X$. This means that the objects 
of  $\Crys(X/W)_{\Q}$ 
are those of  $\Crys(X/W)$ and that the morphisms of  $\Crys(X/W)_{\Q}$ are those of 
 $\Crys(X/W)$ tensored with $\Q$. So one has a natural functor 
 $\Crys(X/W) \xrightarrow{\Q \otimes}  \Crys(X/W)_{\Q}$ which is the identity on objects. 
The image of $\cE$ by this functor is denoted by  $\Q \otimes \cE$. 
When $X$ is liftable to a $p$-adic smooth formal scheme $X_W$ over $W$, 
the functors \eqref{evformal} and \eqref{mic-d} induce the full embedding 
\begin{equation}\label{mic-dq}
\Crys(X/W)_{\Q} \hra \text{($(\Q \otimes \cO_{X_W})$-coherent left $(\Q \otimes \wh{\cD}_{X_W/W}^{(0)})$-modules)}, 
\end{equation} 
which is functorial with respect to $X_W/W$.

\medskip 

Next, we recall the basic facts on 
infinitesimal (iso)crystals. 
Basic references are 
\cite{Gro68}, \cite{ogusinf}, \cite[\S 2]{berthelotogus}. 
For a scheme $X$ of finite type over $k$, let 
$(X/W)_{\rm inf}$ (resp. $(X/W_n)_{\rm inf}$) 
be the {\it infinitesimal site} 
on $X/W$ (resp. $X/W_n$). An object 
is a nilpotent closed immersion $U \hra T$ over $W_n$ for some $n$ 
(resp. over $W_n$) from an open subscheme $U$ of $X$ to a scheme $T$. Morphisms are the obvious ones and the covering is defined in the same way as 
in the case of crystalline site. Thus 
$(X/W)_{\rm inf}$ contains  $(X/W_n)_{\rm crys}$ as a full subcategory.

\medskip 

We can define the structure sheaf $\cO_{X/W}$ (resp. $\cO_{X/W_n}$) and 
the notion of crystals of finite presentation on 
$(X/W)_{\rm inf}$ (resp. $(X/W_n)_{\rm inf}$), 
which we call {\it infinitesimal crystals} on $X/W$ (resp. $X/W_n$), 
in the same way as in the case of crystalline site. 
We denote the category of infinitesimal crystals on 
$X/W$ (resp. $X/W_n$) by ${\rm Inf}(X/W)$ (resp. ${\rm Inf}(X/W_n)$). 
The categories 
${\rm Inf}(X/W)$, ${\rm Inf}(X/W_n)$ also satisfy the descent property 
for Zariski coverings of $X$. 

\medskip 

The topos $(X/W)_{\rm inf}^{\sim}$ associated to 
$(X/W)_{\rm inf}$ is also  
functorial with respect to $X/W$. To prove it, 
we need to prove the analogue of \cite[Lem.~5.11, 5.12, 5.13]{berthelotogus}
for infinitesimal site. The proof of \cite[Lem.~5.11, 5.13]{berthelotogus}
works as it is (and we don't need the argument on PD-structure).
The proof of \cite[Lem.~5.12]{berthelotogus} works if we define 
$T$ there to be the $N$-th infinitesimal neighborhood of $U$ in 
$T_1 \times_Y T_2$ for $N \gg 0$. 
As a consequence, if we are given a diagram \eqref{eq:funct}, 
we have the morphism of topoi 
$(X'/W)_{\rm inf}^{\sim} \lra (X/W)_{\rm inf}^{\sim}$, 
the morphism of  
ringed topoi $((X'/W)_{\rm inf}^{\sim}, \cO_{X'/W}) \allowbreak \lra 
((X/W)_{\rm inf}^{\sim},\cO_{X/W})$ and 
the pullback functor 
$f^*: {\rm Inf}(X/W) \allowbreak \lra {\rm Inf}(X'/W)$. 
Similar functoriality holds also for 
the the ringed topos $((X/W_n)_{\rm inf}^{\sim}, \allowbreak \cO_{X/W_n})$ 
associated to $(X/W_n)_{\rm inf}$ and the category 
${\rm Inf}(X/W_n)$.  

\medskip 

As in the case of crystalline site, 
the natural inclusion of sites 
$(X/W_n)_{\rm inf} \hra (X/W)_{\rm inf}$  induces 
the restriction functor 
\begin{equation}\label{restinf}
{\rm Inf}(X/W) \lra {\rm Inf}(X/W_n); \,\,\, E \mapsto E_n, 
\end{equation}
which induces the equivalence 
\begin{equation}\label{projinf}
{\rm Inf}(X/W) \os{\simeq}{\lra} \varprojlim_n {\rm Inf}(X/W_n); 
\,\,\, E \mapsto (E_n)_n. 
\end{equation}
The functors \eqref{restinf}, \eqref{projinf} are also 
functorial with respect to $X/W$.
Also, for any object $U \hra T$ in 
$(X/W)_{\rm inf}$ (resp. $(X/W_n)_{\rm inf}$), 
the {\it  functor of evaluation} at $T$ 
\begin{equation}\label{evtinf}
{\rm Inf}(X/W) \lra \Coh(\cO_T) \,\,\,\text{(resp. 
${\rm Inf}(X/W_n) \lra \Coh(\cO_T)$)}; \,\,\, E \mapsto E_T
\end{equation}
is defined. 

\medskip 

When $X$ is a smooth variety over $k$ and there exists 
a lifting of $X$ to a $p$-adic smooth formal scheme $X_W$ over $W$,  
we have an equivalence of categories 
\begin{equation}\label{1.7inf}
{\rm Inf}(X/W_n) \os{\simeq}{\lra} 
\text{($\cO_{X_n}$-coherent left $\cD_{X_n/W_n}$-modules)}, 
\end{equation}
where $X_n := X_W \otimes_W W_n$ and 
$\cD_{X_n/W_n}$ is the full ring of differential operators of 
$X_n$ over $W_n$. 
Thus we have an equivalence of categories 
\begin{equation}\label{1.8inf}
{\rm Inf}(X/W) \os{\simeq}{\lra} 
\text{($\cO_{X_W}$-coherent left $\wh{\cD}_{X_W/W}$-modules)}, 
\end{equation}
where $\wh{\cD}_{X_W/W} := \varprojlim_n \cD_{X_n/W_n}$, and 
the action of $\wh{\cD}_{X_W/W}$ on  objects on the right hand side
 is assumed to be continuous. 
When there exists a local basis $x_1, ..., x_d$ of $X_W$ over $W$, 
$\wh{\cD}_{X_W/W}$ is topologically generated by 
$$\dfrac{1}{n!}\left(\dfrac{\partial}{\partial x}\right)^n := 
\dfrac{1}{n_1!}\left(\dfrac{\partial}{\partial x_1}\right)^{n_1} \dfrac{1}{n_2!}\left(\dfrac{\partial}{\partial x_2}\right)^{n_2} 
\cdots \dfrac{1}{n_d!}\left(\dfrac{\partial}{\partial x_d}\right)^{n_d} \quad (n := (n_1, ..., n_d) \in \N^d) $$
over $\cO_{X_W}$  \cite[Section 2]{berthelotogus}.
\medskip 

We give a proof of the equivalence \eqref{1.7inf}, which seems to be missing 
in the literature. For $n,m,r \in \N$, let $X_n(r)_m$ (resp. $X_n(r)'_m$) be the $m$-th infinitesimal 
neighborhood of $X$ (resp. $X_n$) in $X_n(r) := 
\underbrace{X_n \times_{W_n} \cdots \times_{W_n} X_n}_{r+1}$. Also, 
for $i=1,2$, let $p_{i,m}: X_n(1)_m \lra X_n$ (resp. $p'_{i,m}: X_n(1)'_m \lra X_n$) 
be the morphism induced by the $i$-th projection $X_n(1) \lra X_n$, and 
for $1 \leq i < j \leq 3$, let $p_{i,j,m}: X_n(2)_m \lra X_n(1)_m$ (resp. $p'_{i,j,m}: X_n(2)'_m \lra X_n(1)'_m$) 
be the morphism induced by the projection $X_n(2) \lra X_n(1)$ into the $i$-th and $j$-th factors. 
We denote by ${\rm Str}(X/W_n)$ (resp. ${\rm Str}(X_n/W_n)$) be 
the category of pairs $(E,\{\epsilon_m\}_m)$, where $E$ is a coherent 
$\cO_{X_n}$-module and $\{\epsilon_m: p_{2,m}^*E \os{\simeq}{\lra} p_{1,m}^*E\}_m$ 
(resp. $\{\epsilon_m: {p'}_{2,m}^*E \os{\simeq}{\lra} {p'}_{1,m}^*E\}_m$) 
is a compatible family of linear isomorphisms such that $\epsilon_0 = \id_{E}$ and 
$p_{1,2,m}^*\epsilon \circ p_{2,3,m}^*\epsilon = p_{1,3,m}^*\epsilon$ 
(resp. ${p'}_{1,2,m}^*\epsilon \circ {p'}_{2,3,m}^*\epsilon = {p'}_{1,3,m}^*\epsilon$). 
Such a datum is usually called a  {\it stratification} on $E$.  

\medskip 

Then, one has the functor 
\begin{align}
& {\rm Inf}(X/W_n) \lra {\rm Str}(X/W_n), \label{eq:str1} \\ 
& E \mapsto (E_{X_n}, \{ p_{2,m}^*E_{X_n} \os{\simeq}{\lra} E_{X_n(1)_m} \os{\simeq}{\lla} 
p_{1,m}^*E_{X_n}\}_m). \nonumber 
\end{align} 
We can also define the functor 
\begin{equation}\label{eq:strs}
{\rm Str}(X/W_n) \lra  {\rm Inf}(X/W_n)
\end{equation}
of converse direction as follows. If we are given $(E,\{\epsilon_m\}_m) \in {\rm Str}(X/W_n)$
and an object $U \hra T$ in $(X/W_n)_{\rm inf}$, we define the coherent 
$\cO_T$-module $E_T$ in the following way.
Since there exists a morphism 
$\varphi: T \lra X_n$ over $W_n$ lifting the closed immersion $X \hra X_n$ locally on $T$, 
$E_T$ is defined by $E_T := \varphi^*E$ 
locally. If we have two morphisms $\varphi, \varphi': T \lra X_n$ as above, 
$\varphi \times \varphi'$ induces a morphism 
$\psi: T \lra X_n(1)_m$ for some $m$, and $\psi^*\epsilon_m$ defines a 
gluing data for the sheaf $E_T$ defined locally as above. Thus $E_T$ is defined globally 
on $T$ by descent. Then the family $\{E_T\}_{U \hra T}$ gives an object 
of ${\rm Inf}(X/W_n)$. Thus the functor \eqref{eq:strs} is defined. One can check that 
it defines a quasi-inverse of \eqref{eq:str1}, and so \eqref{eq:str1} is an equivalence. 

\medskip 

Next, because the canonical closed immersions $X_n(r)_m \hra X_n(r)'_m$ induce 
the isomorphism $\{X_n(r)_m\}_m \os{\simeq}{\lra} \{X_n(r)'_m\}$ as 
ind-schemes, we have the canonical equivalence of categories 
\begin{equation}\label{eq:str2}
{\rm Str}(X_n/W_n) \os{\simeq}{\lra} {\rm Str}(X/W_n). 
\end{equation}
Finally, we have an equivalence 
\begin{equation}\label{eq:str3}
{\rm Str}(X_n/W_n) \os{\simeq}{\lra} \text{($\cO_{X_n}$-coherent left $\cD_{X_n/W_n}$-modules)}
\end{equation}
by \cite[Prop.~2.11, Rmk.~2.13]{berthelotogus}. By combining 
\eqref{eq:str1}, the quasi-inverse of \eqref{eq:str2} and \eqref{eq:str3}, 
we obtain the equivalence \eqref{1.7inf}. 

\medskip 

 By construction, the functors 
\eqref{eq:str1}, \eqref{eq:str2} and \eqref{eq:str3} are functorial 
with respect to $X_W/W$, namely, if we are given a diagram as \eqref{eq:functw}, 
we have the pullback by $f_W$ modulo $p^n$ on ${\rm Str}(X/W_n), {\rm Str}(X_n/W_n), 
$ and the category of $\cO_{X_n}$-coherent left $\cD_{X_n/W_n}$-modules, 
and the functors are compatible with 
respect to $f^*$ on ${\rm Inf}(X/W_n)$ and the above pullback functors. 
Thus the functors \eqref{1.7inf}, \eqref{1.8inf} are also functorial with respect to 
$X_W/W$.

\medskip 

For any infinitesimal crystal $E$ on $X/W$ or $X/W_n$, 
the value $E_T$ of $E$ at any $U \hra T$ is locally free.
To prove this, it suffices to consider the case of 
infinitesimal crystals on $X$ over $k$, and in this case, 
the claim follows from the equivalence \eqref{1.7inf}, 
Katz' theorem \cite[Thm.~1.3]{Gie75} and \cite[Lem.~6]{dossantos}. 

\medskip

For a smooth variety $X$ over $k$, 
let ${\rm Inf}(X/W)_{\Q}$ be the $\Q$-linearization of 
the category ${\rm Inf}(X/W)$, which is called the category of 
{\it infinitesimal isocrystals} on $X$. 
As in the case of crystalline site, 
one has a natural functor 
${\rm Inf}(X/W) \xrightarrow{\Q \otimes}  {\rm Inf}(X/W)_{\Q}$. 
When $X$ is liftable to a $p$-adic smooth formal scheme $X_W$ over $W$, 
the functor \eqref{1.8inf} induces the full embedding  
\begin{equation}\label{1.8infq}
{\rm Inf}(X/W)_{\Q} \hra 
\text{($(\Q \otimes \cO_{X_W})$-coherent left $(\Q \otimes \wh{\cD}_{X_W/W})$-modules)}, 
\end{equation} 
which is functorial with respect to $X_W/W$. 
For objects in the right hand side, 
the action of $\Q \otimes \wh{\cD}_{X_W/W}$ is assumed to be continuous.

\medskip 

Next we recall the basic facts on convergent isocrystals \cite{ogus1}, \cite{ogus2}. 
On a scheme $X$ of finite type over $k$,  the category ${\rm Enl}(X/W)$ of {\it enlargements}  is defined in \cite[Defn.~2.1]{ogus1}. Objects are pairs $(T, z_T)$ where $T$ is a $p$-adic formal flat scheme of finite type over ${\rm Spf}(W)$ together with a morphism $(T\otimes_W k)_{\rm red}\xrightarrow{z_T} X$. Morphisms in the category are the obvious ones. 
 A {\it convergent isocrystal} \cite[Defn.~2.7]{ogus1} on $X/K$ is a crystal on $ {\rm Enl}(X/W)$ with value on $(T, z_T)$  in the $\Q$-linearization  ${\rm Coh}(\cO_T)_{\Q}$ of the category ${\rm Coh}(\cO_T)$. This defines a category $\Conv(X/K)$ with the obvious morphisms, which is abelian \cite[Cor.~2.10]{ogus1}. We denote the structure convergent isocrystal on $X/K$ by $\cO_{X/K}$. The category $\Conv(X/K)$ is functorial 
with respect to $X/W$, and it has descent property for Zariski coverings.

\medskip

When $X$ is liftable to a $p$-adic smooth formal scheme $X_W$ over $W$, 
we have the equivalence 
\begin{equation}\label{1.8conv}
\Conv(X/K) \os{\simeq}{\lra} 
\text{($(\Q \otimes \cO_{X_W})$-coherent left $\cD^{\dagger}_{X_W,\Q}$-modules)}, 
\end{equation} 
where $\cD^{\dagger}_{X_W,\Q} = \varinjlim_m \Q \otimes \wh{\cD}^{(m)}_{X_W/W}$, 
$\wh{\cD}^{(m)}_{X_W/W} := \varprojlim_n \cD^{(m)}_{X_n/W_n}$ 
($X_n := X_W \otimes W_n$) and $\cD^{(m)}_{X_n/W_n}$ is the ring of PD-differential 
operators of level $m$ \cite[Prop.~4.1.4]{berthelotI}. 
It is functorial with respect to $X_W/W$.

\medskip 

For a smooth variety $X$ over $k$, Ogus defines in 
\cite[Thm.~0.7.2]{ogus2} a fully faithful functor 
\begin{equation}\label{eqn:convtocrys}
 \Phi: \Conv(X/K) \lra \Crys(X/W)_{\Q} 
 \end{equation}
using a nice system of objects in ${\rm Enl}(X/W)$ and 
the local nature of isocrystals \cite[Lem.~0.7.5]{ogus2},  
such that, for any $\cE \in \Conv(X/K)$, the 
convergent cohomology $H^i_{\conv}(X/K,\cE)$ (defined in 
\cite[Section~4]{ogus2}) and the crystalline cohomology 
$H^i_{\crys}(X/W,\Phi(\cE))$ coincide \cite[Thm.~0.7.7]{ogus2}.
The functor $\Phi$ is functorial with respect to $X/W$.  
Also, when $X$ is liftable to a $p$-adic smooth formal scheme $X_W$ over $W$, 
$\Phi$ is compatible with the canonical functor 
\begin{align}
& 
\text{($(\Q \otimes \cO_{X_W})$-coherent left $\cD^{\dagger}_{X_W,\Q}$-modules)} 
\label{eq:dd} \\ 
\lra \,\, & 
\text{($(\Q \otimes \cO_{X_W})$-coherent left $(\Q \otimes \wh{\cD}_{X_W/W}^{(0)})$-modules)}, 
\nonumber 
\end{align}
via \eqref{mic-dq} and \eqref{1.8conv}. The functor \eqref{eq:dd} is obviously functorial with respect to $X_W/W$.
In the following, we omit to write the functor 
$\Phi$ and regard a convergent isocrystal $\cE$ on 
$X$ as an isocrystal on $(X/W)_{\crys}$ via the functor $\Phi$. 
For the whole theory of convergent isocrystals, we also refer to  
\cite{berthelotpreprint}, \cite{lestum}.

\medskip 

 For a smooth variety $X$ over $k$,  
we have the full embedding of sites $(X/W_n)_{\rm crys} \hra (X/W)_{\rm inf}$, which 
induces the functors  
$$ 
\Phi': {\rm Inf}(X/W) \lra {\rm Crys}(X/W), \quad 
\Phi'_{\Q}: {\rm Inf}(X/W)_{\mathbb{Q}} \lra {\rm Crys}(X/W)_{\mathbb{Q}}. $$ 
They are functorial with respect to $X/W$. 
When $X$ is liftable to a $p$-adic smooth formal scheme $X_W$ over $W$, 
$\Phi'. \Phi'_{\Q}$ are compatible with the canonical functors 
\begin{align}
& 
\text{($\cO_{X_W}$-coherent left $\wh{\cD}_{X_W/W}$-modules)} 
\label{eq:dd2} \\ 
\lra \,\, & 
\text{($\cO_{X_W}$-coherent left $\wh{\cD}_{X_W/W}^{(0)}$-modules)}, 
\nonumber 
\end{align}
\begin{align}
& 
\text{($(\Q \otimes \cO_{X_W})$-coherent left $(\Q \otimes \wh{\cD}_{X_W/W})$-modules)} 
\label{eq:dd3} \\ 
\lra \,\, & 
\text{($(\Q \otimes \cO_{X_W})$-coherent left $(\Q \otimes \wh{\cD}_{X_W/W}^{(0)})$-modules)} 
\nonumber 
\end{align}
via \eqref{evformal}, \eqref{mic-d}, \eqref{mic-dq}, \eqref{1.8inf} 
\eqref{1.8infq}, 
because the constructions involved are done in a parallel way for 
(iso)crystals and infinitesimal (iso)crystals. The functors 
\eqref{eq:dd2} and \eqref{eq:dd3} are functorial with respect to $X_W/W$.

\medskip 

We prove that $\Phi'$ is fully faithful. 
To see this, we may work locally by Zariski descent for morphisms in  
${\rm Inf}(X/W)$ and ${\rm Crys}(X/W)$. 
So we may assume that $X$ lifts to a $p$-adic smooth formal scheme 
$X_W$ over $W$. Thus we are reduced to proving the full faithfulness of 
\eqref{eq:dd2}. Noting 
the local freeness of the values of any object in 
${\rm Inf}(X/W)$, we are reduced to proving the equality 
$$ M^{\wh{\cD}_{X_W/W} = 0} \os{\simeq}{\lra} M^{\wh{\cD}^{(0)}_{X_W/W} = 0} $$
of horizontal elements 
for any $\cO_{X_W}$-locally free $\wh{\cD}_{X_W/W}$-module $M$. 
This is clear because any such $M$ is flat over $W$ and 
the image of $\Q \otimes_{\Z} \wh{\cD}^{(0)}_{X_W/W}$ is dense in 
$\Q \otimes_{\Z} \wh{\cD}_{X_W/W}$ because, in terms of local coodinates 
$x := (x_1, ..., x_d)$, 
the former contains the sections $\dfrac{1}{n!}\left(\dfrac{\partial}{\partial x}\right)^n \,
(n \in \N^d)$ which topologically generates the latter.
As a consequence, we see that the functor $\Phi'_{\Q}$ is also 
fully faithful. 

\medskip

\medskip 
Also, we prove that the functor $\Phi'_{\Q}$ factors through 
$\Phi$ and thus induces the functor 
$$ {\rm Inf}(X/W)_{\Q} \lra \Conv(X/K) $$
which we denote also by $\Phi'_{\Q}$. To prove it, 
we may work locally by Zariski descent for $\Conv(X/K)$ and full faithfulness of 
$\Phi$. So we may assume that $X$ lifts to a $p$-adic smooth formal scheme 
$X_W$ over $W$, and in this case, the claim follows from the fact that 
the functor \eqref{eq:dd3} factors through \eqref{eq:dd}. 
In the following, we omit to write also the functor 
$\Phi'_{\Q}$ and regard an infinitesimal isocrystal on 
$X$ as a convergent isocrystal on $X/K$ 
(hence an isocrystal on $X$) via the functor $\Phi'_{\Q}$. 

\medskip

We recall the functoriality of the categories discussed above
with respect to the absolute Frobenius morphism $F: X \lra X$. 
By applying the functoriality with respect to the diagram \eqref{eq:funct} 
in the case $f = F$ and $f' = \sigma_W$, we obtain the pullback functors

\begin{align*}
\bullet \ & \Crys(X/W) \lra \Crys(X/W), \,\,\,\, 
\bullet \ \Crys(X/W_n) \lra \Crys(X/W_n), \\ 
\bullet \ & \MIC(X)^{\qn} \lra \MIC(X)^{\qn}, \,\,\,\, 
\bullet \ \MIC(X) \lra \MIC(X), \\ 
\bullet \ & 
\Coh(X) \lra \Coh(X), \,\,\,\, 
\bullet \ \Crys(X/W)_{\Q} \lra \Crys(X/W)_{\Q}, \\ 
\bullet \ & {\rm Inf}(X/W) \lra  {\rm Inf}(X/W), \,\,\,\, 
\bullet \ {\rm Inf}(X/W_n) \lra  {\rm Inf}(X/W_n), \\
\bullet \ &  {\rm Inf}(X/W)_{\Q} \lra  {\rm Inf}(X/W)_{\Q}, \,\,\,\,
\bullet \ \Conv(X/K) \lra \Conv(X/K), 
\end{align*}
which we all denote by $F^*$. By the functoriality discussed above, 
the functors 
\eqref{rest}, \eqref{proj}, \eqref{evxw}, \eqref{evxwn}, 
\eqref{evpn} for $n=1$, 
\eqref{restinf}, \eqref{projinf}, 
$\Phi$, $\Phi'$ $\Phi'_{\Q}$ 
are compatible with the various Frobenius pullbacks $F^*$. 
In particular, the pullback $F^*$ by Frobenius on $\Crys(X/W)_{\Q}$ respects 
the full subcategories $\Conv(X/K), {\rm Inf}(X/W)_{\Q}$.

\medskip 
When $X$ is liftable to a $p$-adic smooth formal scheme $X_W$ over $W$ and 
$F$ is liftable to a morphism $F_W: X_W \lra X_W$ over $\sigma_W$, 
$F_W$ and $\sigma_W$ induce 
the pullback functors 
{\allowdisplaybreaks{
\begin{align*}
\bullet \  & \MIC(X_n)^{\qn} \lra \MIC(X_n)^{\qn}, \,\,\,\, 
\bullet \ \MIC(X_W)^{\qn} \lra \MIC(X_W)^{\qn}, \\ 
\bullet \ & {\rm Str}(X/W_n) \lra {\rm Str}(X/W_n), \,\,\,\, 
\bullet \ {\rm Str}(X_n/W_n) \lra {\rm Str}(X_n/W_n), \\ 
\bullet \ &  \text{($\cO_{X_W}$-coherent left $\wh{\cD}_{X_W/W}^{(0)}$-modules)} \lra 
 \text{($\cO_{X_W}$-coherent left $\wh{\cD}_{X_W/W}^{(0)}$-modules)}, \\ 
\bullet \ & \text{($(\Q \otimes \cO_{X_W})$-coherent left $(\Q \otimes \wh{\cD}_{X_W/W}^{(0)})$-modules)} \\ & \lra 
 \text{($(\Q \otimes \cO_{X_W})$-coherent left $(\Q \otimes \wh{\cD}_{X_W/W}^{(0)})$-modules)}, \\ 
\bullet \ & \text{($\cO_{X_n}$-coherent left $\cD_{X_n/W_n}$-modules)} \lra 
\text{($\cO_{X_n}$-coherent left $\cD_{X_n/W_n}$-modules)}, \\ 
\bullet \ & \text{($\cO_{X_W}$-coherent left $\wh{\cD}_{X_W/W}$-modules)} 
\lra \text{($\cO_{X_W}$-coherent left $\wh{\cD}_{X_W/W}$-modules)}, \\ 
\bullet \ & \text{($(\Q \otimes \cO_{X_W})$-coherent left $(\Q \otimes \wh{\cD}_{X_W/W})$-modules)} \\ & \lra 
\text{($(\Q \otimes \cO_{X_W})$-coherent left $(\Q \otimes \wh{\cD}_{X_W/W})$-modules)}, \\
\bullet \ & \text{($(\Q \otimes \cO_{X_W})$-coherent left $\cD^{\dagger}_{X_W,\Q}$-modules)} \\ 
& \lra 
\text{($(\Q \otimes \cO_{X_W})$-coherent left $\cD^{\dagger}_{X_W,\Q}$-modules)}, 
\end{align*}}}
which we all denote by $F_W^*$. The functors 
\eqref{evpn}, \eqref{evformal}, \eqref{mic-d}, \eqref{mic-dq}, \eqref{1.7inf}, \eqref{1.8inf}, 
\eqref{eq:str1}, \eqref{eq:str2}, \eqref{eq:str3}, \eqref{1.8infq}, \eqref{1.8conv}, 
\eqref{eq:dd}, \eqref{eq:dd2}, \eqref{eq:dd3} are 
compatible with the various pullbacks by Frobenius or its lifting. 

\medskip

We give a short 
review on {\it Cartier descent} and the {\it inverse Cartier transform}
after Ogus-Vologodsky \cite{ov}. 

\medskip 

For $(E,\nabla) \in \MIC(X)$, one defines the $p$-{\it curvature}
$\beta: S^{\bullet}T_X \lra {\cal E}nd_{\cO_X}(E)$, which is 
an $F^*$-linear algebra homomorphism. We say that 
$(E,\nabla)$ has {\it zero} $p$-{\it curvature} (resp. 
has {\it nilpotent} $p$-{\it curvature of length} 
$p-1$) if $\beta(S^nT_X) = 0$ for 
$n \geq 1$ (resp. $n \geq p$). We denote the full subcategory of $\MIC(X)$
 of modules with integrable connection with 
zero $p$-curvature (resp. nilpotent $p$-curvature of length $p-1$) 
by $\MIC_{0}(X)$ (resp. $\MIC_{p-1}(X)$). The forgetful functor 
$\MIC_s(X)\to \Coh(X), \ s=0, p-1,$ yields the abelian structure on $\MIC_s(X)$ with respect to which the functor is exact.

\medskip
A {\it Higgs module} is a pair $(H, \theta)$ consisting of 
a coherent $\cO_X$-module $H$ and an $\cO_X$-linear morphism 
$\theta: H \lra H \otimes \Omega^1_X$ satisfying the 
integrability condition $\theta \wedge \theta = 0$. 
The map $\theta$, called the Higgs field,  induces an $\cO_X$-algebra homomorphism 
$S^{\bullet}T_X \lra {\cal E}nd_{\cO_X}(H)$, denoted by the same symbol $\theta$.
We say that $(H, \theta)$ has {\it nilpotent Higgs field of 
length} $\leq p-1$ if $\theta(S^nT_X) = 0$ for 
$n \geq p$. With the obvious morphisms, we denote the category of Higgs modules with 
Higgs field zero (resp. nilpotent Higgs field of length $\leq p-1$) by 
$\HIG_0(X)$ (resp. $\HIG_{p-1}(X)$). The forgetful functor 
$\HIG_{s}(X)\to \Coh(X), \ s=0,p-1,$ yields the abelian structure on $\HIG_{p-1}(X)$ with respect to which the functor is exact, and is an equivalence of categories for $s=0$.

\medskip

For a coherent $\cO_X$-module $E$, $F^*E$ is uniquely endowed with 
an integrable connection $\nabla_{\rm can}$ with zero $p$-curvature 
which is characterized by the condition that $F^{-1}(E) \subset F^*E$  is the subsheaf of abelian groups of flat sections.
The functor 
\begin{equation}\label{cartierd}
\Coh(\cO_X) \lra \MIC_0(X) , \ E \mapsto (F^*E, \nabla_{\rm can})
\end{equation}
 is an equivalence of 
categories. This fact is called {\it Cartier descent} (\cite[Section~2]{ov}).  
It is easy to see by direct calculation that the functor 
$F^*: \MIC(X) \lra \MIC(X)$ factors through \eqref{cartierd} and hence 
it has the form 
\begin{multline} \label{eqn:can}
F^*: \MIC(X)  \xrightarrow{(E,\nabla)\mapsto E} \Coh(X) \xrightarrow{\eqref{cartierd}} \MIC_0(X)\subset  \MIC(X),\\  (E,\nabla) \mapsto (F^*E, \nabla_{\rm can}).
\end{multline} 

\medskip

Ogus-Vologodsky generalized the equivalence \eqref{cartierd} when 
$X$ admits a smooth lifting $\wt{X}$ over $W_2$. 
(In \cite{ov}, they assume the existence of a smooth lifting $\wt{X}'$ of 
the Frobenius twist $X':=X\otimes_{\sigma} k$ over $W_2$, but this is equivalent to the condition above because $k$ is perfect.) Assuming the existence 
of $\wt{X}$, they generalized the 
equivalence \eqref{cartierd} to the equivalence 
\begin{equation}\label{cartierf}
C^{-1}: \HIG_{p-1}(X) \lra \MIC_{p-1}(X), 
\end{equation}
which is called {\it the inverse Cartier transform}. 
(Precisely speaking, the functor $C^{-1}$ here is the functor 
$C^{-1}_{\cX/\cS} \circ \pi^*_{X/S}$ used in \cite[(4.16.1)]{ov} 
for $\cX/\cS = (X,\wt{X}')/\Spec W_2$ and $S = \Spec k$.) 
Note that any object $(E,\nabla)$ in 
$\MIC(X)^{\qn}$ with $E$ torsion free of rank $\leq p$ is contained in 
$\MIC_{p-1}(X)$. 

\medskip

Finally, we recall some terminologies on (semi)stability. 
When $X$ is projective, we fix 
a $k$-embedding $\iota: X \hra \mathbb{P}_k^N$ of $X$ into a projective 
space and denote the pullback of $\cO_{\mathbb{P}_k^N}(1)$ to $X$ by 
$\cO_X(1)$. For a coherent, torsion free 
$\cO_X$-module $E$, the slope $\mu(E)={\rm deg}(E)/{\rm rank}(E)$ and
the reduced Hilbert polynomial $p_E(n)=\chi(X, E\otimes_{\cO_X} \cO_X(n))/{\rm rank}(E)$ are defined with respect to 
$\cO_X(1)$, as well as $\mu$- (Mumford-Takemoto) or  $\chi$- (Gieseker-Maruyama)  (semi)stability. As usual, $E\in \Coh(X)$ is 
said to be $\mu$-{\it  (semi)stable} if it is torsion free and $\mu(E')< \mu(E) \ (\mu(E')< \mu(E))$ for any strict subobject 
$0\neq E'\subset E$, and it is said to be {\it strongly} $\mu$-{\it semistable} if $(F^n)^*(E)$ is $\mu$-semistable for all natural numbers $n$. Similarly for $\chi$-(semi)stability.

\medskip

We  say that an object $E$ in $\Crys(X/k) = \MIC(X)^{\qn}$ 
is $\mu$-{\it semistable} as a crystal if $E$ is torsion free 
as an $\cO_X$-module and $\mu(E') \leq \mu(E)$ 
for any non-zero subobject $E'$ of $E$ in $\Crys(X/k) = \MIC(X)^{\qn}$. 
Similarly, we say that an object $(E,\nabla)$ in 
$\MIC_{p-1}(X)$ (resp. in $\HIG_{p-1}(X)$)  is 
$\mu$-{\it (semi)stable} if $E \in \Coh(X)$ is torsion free 
and $\mu(E') < \mu(E)  \ (\mu(E') \leq  \mu(E))$ 
for any  strict  subobject $0\neq (E',\nabla')$ of $(E,\nabla)$ in 
$\MIC_{p-1}(X)$  (resp. in  $\HIG_{p-1}(X)$). 

\medskip

\section{Statement of the main results}  \label{sec:results}

We say that $\cE \in \Crys(X/W)_{\Q}$ (resp. $\Conv(X/K)$, ${\rm Inf}(X/W)_{\Q}$, 
${\rm Inf}(X/k)$) is 
{\it constant} when it is isomorphic to a finite sum of 
the structure isocrystal $\Q \otimes \cO_{X/W}$ (resp. 
the structure convergent isocrystal $\cO_{X/K}$, the structure isocrystal 
$\Q \otimes \cO_{X/W}$, the strucure crystal $\cO_{X/k}$). 
As a $p$-adic version of  Gieseker's conjecture, according to which if a smooth projective variety $X$ over 
an algebraically closed field $k$ has a trivial \'etale fundamental group, then 
infinitesimal crystals on $X/k$, that is 
$\cO_X$-coherent $\cD_X$-modules, are constant 
(see \cite{esnaultmehta} for a positive answer),
 Johan de Jong posed the following conjecture in October 2010: 
 
\begin{conj}[de Jong]\label{dejongconj0}
Let $X$ be a connected smooth projective  variety over an algebraically closed field $k$ of characteristic $p>0$ and 
assume that the \'etale fundamental group of $X$ is 
trivial. Then any  isocrystal $\cE \in  \Crys(X/W)_{\Q}$   is 
constant. 
\end{conj}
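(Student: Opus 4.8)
The plan is to reduce de Jong's conjecture to the case of irreducible isocrystals, and then to run the Frobenius-divisibility and semistability machinery of Theorem~\ref{thmintro2} unconditionally on each irreducible object. Since $\Crys(X/W)_{\Q}$ is abelian, every $\cE$ admits a composition series, so it suffices to prove (a) that every irreducible isocrystal is isomorphic to the unit $\Q \otimes \cO_{X/W}$, and (b) that the composition series splits. For (b), I would show $\Ext^1_{\Crys(X/W)_{\Q}}(\Q \otimes \cO_{X/W}, \Q \otimes \cO_{X/W}) = H^1_{\crys}(X/W)_{\Q} = 0$: after a local lift $X_W$ this group is $H^1_{\dR}(X_W/W)_{\Q}$, whose dimension equals the first $\ell$-adic Betti number of $X$ by the de Rham--crystalline comparison; by smooth proper base change this Betti number agrees with that of the special fiber, which vanishes since $\pi_1^{\et}(X) = \{1\}$ kills $H^1_{\et}$. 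Granting (a), every $\cE$ is then a successive extension of units with vanishing $\Ext^1$, hence a finite direct sum of units, i.e. constant. Note that the full $\pi_1^{\et}$ of the geometric generic fiber, controlling higher-rank phenomena, may be nontrivial, so only this reduction step is formal; the content is entirely in (a).

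For (a), fix an irreducible $\cE$ and a $p$-torsion-free lattice $E \in \Crys(X/W)$ with $\Q \otimes E \cong \cE$ and value $E_X \in \Coh(X)$. The target is to place $\cE$ in the hypotheses of Theorem~\ref{thmintro2}, namely to exhibit it as a convergent isocrystal all of whose Frobenius divisions $\cE^{(n)}$ admit lattices with strongly $\mu$-semistable value on $X$. The first hard step is to prove that an irreducible isocrystal is convergent and Frobenius divisible. Since $\Phi : \Conv(X/K) \hookrightarrow \Crys(X/W)_{\Q}$ is merely fully faithful and convergence is strictly stronger than quasi-nilpotence, this does not come for free; I would attack it by showing that irreducibility of the $\wh{\cD}^{(0)}_{X_W/W}$-module $\Q \otimes E_{X_W}$ forces the finer radius-of-convergence estimates, and that the Frobenius endofunctor $F^*$ on the irreducible part of $\Crys(X/W)_{\Q}$ is essentially surjective, using Cartier descent to recognize $F^*$-pullbacks through the vanishing of the $p$-curvature on graded pieces. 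Once divisibility is available, the Chern classes of $E_X$ vanish by the argument for lattices of convergent isocrystals.

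The second, and I expect the main, obstacle is to produce a lattice whose value on $X$ is strongly $\mu$-semistable, with no hypothesis on the slopes of $\Omega^1_X$. A Langton-type argument yields a $\mu$-semistable lattice, but upgrading $\mu$-semistability to strong $\mu$-semistability -- stability of $(F^n)^*E_X$ for all $n$ -- is exactly where the slope condition $\mu_{\max}(\Omega^1_X) \le 0$ was used, since Langer's bound shows the instability of Frobenius pullbacks is governed by $\mu_{\max}(\Omega^1_X)$. The plan to remove this is to exploit the integrable connection carried by $E_{X_W}$: a subsheaf destabilizing $(F^n)^*E_X$ should, via the canonical connection of Cartier descent together with the crystal structure, propagate to a proper nonzero sub-isocrystal of $\cE$, contradicting irreducibility. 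Making this obstruction uniform in $n$ against positive $\Omega^1_X$-slopes is the crux of the whole argument.

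With Chern-class vanishing and strong $\mu$-semistability in hand, Theorem~\ref{thmintro2} applies directly and gives $\cE \cong \Q \otimes \cO_{X/W}$, establishing (a); concretely, the moduli and noetherianity argument together with Gieseker's theorem \cite{esnaultmehta} force the graded pieces of $E_X$ to be trivial, and crystalline deformation theory propagates this from $X$ to each $\Crys(X/W_n)$, whence $\cE$ is the unit after assembling through $\Crys(X/W) \simeq \varprojlim_n \Crys(X/W_n)$ and $\Q$-linearizing. The entire weight of the proof thus rests on the two steps flagged above: convergence and Frobenius divisibility of an arbitrary irreducible isocrystal, and uniform control of Frobenius destabilization when $\Omega^1_X$ has positive slopes.
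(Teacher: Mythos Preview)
This statement is Conjecture~\ref{dejongconj0}, and the paper does \emph{not} prove it; it remains open. The paper restricts to the sub-conjecture for convergent isocrystals (Conjecture~\ref{dejongconj}) and even there proves only partial results, under extra hypotheses on ranks of constituents or on $\mu_{\max}(\Omega^1_X)$ (Theorems~\ref{thm1} and~\ref{thm2}). Your proposal is not a proof but an outline with two openly flagged gaps, and those gaps are precisely the obstructions that keep the conjecture open.

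Concretely: your step (b) is fine and is Proposition~\ref{rem:eo}(2) in the paper. But in step (a), the claim that an irreducible object of $\Crys(X/W)_{\Q}$ is automatically convergent --- equivalently, by Remark~\ref{rmk:Fdivconv}, infinitely $F^*$-divisible --- has no known proof. Proposition~\ref{prop:fdivcrys} shows $F^*$ can fail to be essentially surjective on $\Crys(X/W)_{\Q}$; that example has nontrivial $\pi_1^{\et}$, but nothing in the paper (or elsewhere) shows that irreducibility together with $\pi_1^{\et}(X)=\{1\}$ forces convergence, and your suggested mechanism via $p$-curvature and Cartier descent does not produce infinite $F^*$-divisibility over $W$. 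Likewise, upgrading the $\mu$-semistable Langton lattice of Proposition~\ref{prop:langton} to a strongly $\mu$-semistable one without any bound on $\mu_{\max}(\Omega^1_X)$ is exactly what the paper cannot do: Proposition~\ref{prop:mr} is the only tool available and it requires those slope conditions. Your idea that a destabilizing subsheaf of $(F^n)^*E_X$ would propagate to a sub-isocrystal of $\cE$ does not work as stated, since the destabilizer is only an $\cO_X$-subsheaf, not horizontal, and the obstruction map $\ol{\nabla}$ into $\Omega^1_X$ need not vanish when $\Omega^1_X$ has positive slopes. So you have correctly located the difficulties, but not resolved them; there is no paper-proof to compare against, because none exists.
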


By the fully faithful functor $\Phi: \Conv(X/K)\to  \Crys(X/W)_{\Q}$  defined in \eqref{eqn:convtocrys}, Conjecture~\ref{dejongconj0} contains the sub-conjecture

\begin{conj} \label{dejongconj}
Let $X$ be a connected smooth projective variety over an algebraically closed field $k$ of characteristic $p>0$ and 
assume that the \'etale fundamental group of $X$ is 
trivial. Then any convergent isocrystal $\cE \in  \Conv(X/K)$ is constant. 
\end{conj}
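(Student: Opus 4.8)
The plan is to deduce the constancy of a convergent isocrystal $\cE$ from the constancy of the value on $X$ of a suitable lattice, reversing the deformation-theoretic flow of Propositions~\ref{prop:e}, \ref{prop:d} and Corollary~\ref{cor:d}. Concretely, regarding $\cE$ as an isocrystal in $\Crys(X/W)_{\Q}$ via $\Phi$, I would first produce a $p$-torsion-free crystal $E \in \Crys(X/W)$ with $\Q \otimes E \cong \cE$, a \emph{lattice}; its reduction lives in $\Crys(X/k) = \MIC(X)^{\qn}$, with value $E_X \in \Coh(X)$. If one can show $E_X$ is trivial (a finite sum of $\cO_X$), then the crystalline deformation theory lifts this triviality modulo $p^n$ for every $n$, hence over $W$ by \eqref{proj}, and finally to $\cE$ after inverting $p$. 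Thus the whole problem is to force $E_X$ to be constant.

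The engine for the latter is Frobenius divisibility together with Gieseker's theorem. Since $\cE$ is convergent it is infinitely $F$-divisible: for each $n$ there is $\cE^{(n)} \in \Conv(X/K)$ with $(F^*)^n \cE^{(n)} \cong \cE$. Choosing lattices $E^{(n)}$ of $\cE^{(n)}$, Proposition~\ref{prop:chern} shows that all Chern classes of the values $E^{(n)}_X$ vanish, so these sheaves define points in the moduli of $\chi$-semistable sheaves with trivial Chern classes. The noetherianity argument of Propositions~\ref{prop:a}, \ref{prop:b}, \ref{prop:c} then shows, exactly as in the proof of Gieseker's conjecture in \cite{esnaultmehta}, that a sheaf on $X$ (with $\pi_1^{\et}(X) = \{1\}$) carrying enough Frobenius divisibility and trivial Chern classes must already be trivial. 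Feeding this back through Theorem~\ref{thm1} would then give the constancy of $\cE$, and with it Conjecture~\ref{dejongconj}.

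The decisive input this argument needs is that the values $E^{(n)}_X$ be \emph{strongly} $\mu$-semistable, i.e. that $\mu$-semistability survive every Frobenius pullback; only then do the $E^{(n)}_X$ stay inside a fixed bounded family to which the noetherian moduli argument applies uniformly in $n$. To get started I would invoke the Langton-type Proposition~\ref{prop:langton} to replace $E$ by a lattice whose restriction to $\MIC(X)^{\qn}$ is $\mu$-semistable \emph{as a crystal}. The gap between semistability as a crystal on $X$ and strong $\mu$-semistability of the underlying $\cO_X$-module is precisely where the difficulty concentrates.

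The main obstacle is the failure of Frobenius to preserve $\mu$-semistability in characteristic $p$: by Langer's estimate \cite{langerlie} the destabilizing defect of $(F^n)^*E_X$ is controlled by $\mu_{\max}(\Omega^1_X)$. To attack the general case I would try to turn the infinite $F$-divisibility against this defect: writing $E_X \sim (F^n)^* E^{(n)}_X$, the destabilization of $E_X$ is an $n$-fold accumulation of per-step defects each bounded by $\mu_{\max}(\Omega^1_X)$, and one would hope to show that the $F$-divisibility forces the Harder--Narasimhan polygons of the $E^{(n)}_X$ to collapse into a single bounded family as $n \to \infty$, so that the noetherian argument of Propositions~\ref{prop:a}--\ref{prop:c} again applies. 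Making such a boundedness argument unconditional --- with no hypothesis on $\Omega^1_X$ and no stability built into $\cE$ --- is exactly where I expect the proof to stall, and it is the reason the present methods secure the conjecture only under the slope condition $\mu_{\max}(\Omega^1_X) \le 0$ (Theorem~\ref{thm2}, Corollary~\ref{rk1negativeslope}) or a filtered/rank-one hypothesis on $\cE$ (Theorem~\ref{thm2}(1), Proposition~\ref{prop:mr}).
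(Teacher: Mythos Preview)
The statement you were given is Conjecture~\ref{dejongconj}, and the paper does \emph{not} prove it; it only establishes partial cases (Theorems~\ref{thm1}, \ref{thm2}, Corollary~\ref{rk1negativeslope}, Corollary~\ref{cor}). Your proposal is not a proof either, and you say so yourself in the last paragraph: the strategy stalls precisely at the step where one needs the values $E^{(n)}_X$ to be strongly $\mu$-semistable. So there is no discrepancy to adjudicate --- your outline and the paper agree both on the approach and on the location of the gap.

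A few small inaccuracies in your summary are worth flagging. First, constancy of $E_X$ in $\Crys(X/k)$ does not by itself lift to constancy modulo $p^n$; Proposition~\ref{prop:d} only shows that after a further Frobenius pullback $(F^*)^{d(n-1)}$ the restriction to $\Crys(X/W_n)$ becomes constant, and the actual proof of Theorem~\ref{thm1} then has to juggle the lattices $G^{(n)} := (F^*)^{c+d(n-1)}E^{(c+d(n-1))}$ against a fixed lattice $E$ via a Mittag-Leffler argument on $H^0_{\crys}$. Second, vanishing Chern classes alone (Proposition~\ref{prop:chern}) do not place $E^{(n)}_X$ in the moduli of $\chi$-stable sheaves; that is exactly why the strong $\mu$-semistability hypothesis is needed before Theorem~\ref{thm:langer} and Propositions~\ref{prop:a}--\ref{prop:c} can be invoked. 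Third, Proposition~\ref{prop:langton} as stated requires $\cE$ irreducible, so in the general case one first passes to a Jordan--H\"older filtration (as in the proof of Theorem~\ref{thm2}). None of these affect your main point, which is correct: without an unconditional argument bounding the Harder--Narasimhan polygons of the $E^{(n)}_X$, the method does not close, and the paper makes no claim that it does.
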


The aim of this article is to discuss  Conjecture~\ref{dejongconj}. 
We restrict our attention to Conjecture \ref{dejongconj} rather than Conjecture \ref{dejongconj0} mainly because 
 we shall strongly use the following proposition, which is 
a special case of \cite[Cor.4.10]{ogus1}: 
\begin{prop}\label{fdivision}
Let $X$ be a smooth variety over $k$. Then the pullback functor $F^*: \Conv(X/K)\to \Conv(X/K)$ is an equivalence of categories. 
\end{prop}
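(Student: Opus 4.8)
The plan is to verify the claim by testing on enlargements, using the (almost tautological) fact that the absolute Frobenius $F \colon X \to X$ induces an equivalence on the category of enlargements $\Enl(X/W)$ together with the PD-structures playing no role here (this is Ogus's point in \cite[Cor.~4.10]{ogus1}). Concretely, given an enlargement $(T, z_T) \in \Enl(X/W)$ with $T$ a $p$-adic formal flat $W$-scheme of finite type and $z_T \colon (T \otimes_W k)_{\red} \to X$, one wants to produce a ``Frobenius pullback'' enlargement $F^\natural(T,z_T)$ so that pulling a convergent isocrystal back along $F$ amounts to restricting its crystal structure along the functor $(T,z_T) \mapsto F^\natural(T,z_T)$, and then to check that this latter functor is an equivalence (or at least cofinal enough) to make $F^*$ an equivalence on crystals over $\Enl(X/W)$.

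The key steps, in order, would be as follows. First I would make explicit the functor on enlargements: since $T$ is flat over $W$ and $\cO_T$ is $p$-adically complete, the $p$-power map is not an endomorphism of $T$, so one instead uses that $(T \otimes_W k)_{\red}$ carries its own absolute Frobenius $F_{T_0}$, and one sets the new structure morphism to be $z_T \circ F_{T_0} \colon (T\otimes_W k)_{\red} \to (T\otimes_W k)_{\red} \to X$ while keeping $T$ unchanged but twisting the $W$-structure by $\sigma_W$; this is exactly the recipe behind the pullback functor $F^* \colon \Conv(X/K) \to \Conv(X/K)$ recalled in the preliminaries via the diagram \eqref{eq:funct} with $f = F$, $f' = \sigma_W$. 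Second, I would observe that because $(T\otimes_W k)_{\red}$ is reduced and $k$ is perfect, $F_{T_0}$ is a homeomorphism on underlying spaces and is an isomorphism on an enlargement after passing to a suitable cover or after replacing $T_0$ by its perfection in the limit — more precisely, the point is that the crystal condition forces the value on $(T,z_T)$ to be determined by the value on $(T, z_T \circ F_{T_0})$, and conversely, so the induced endofunctor of $\Conv(X/K)$ is essentially invertible. Third, I would build the quasi-inverse: given $\cE$, define $G(\cE)$ on $(T,z_T)$ by taking the value of $\cE$ on the enlargement whose structure map is ``$z_T$ composed with a $p$-th root of Frobenius'' — which exists formally because on the reduced scheme $T_0$ over the perfect field $k$ one can extract Frobenius roots after base change, and the $\Q$-linearization kills the resulting finite inseparable discrepancies. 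Fourth, I would check $F^* \circ G \cong \id$ and $G \circ F^* \cong \id$ by a direct computation on values, using that $\cO_T$ being $p$-torsion free makes the relevant maps injective and the isocrystal (rational) coefficients make them isomorphisms.

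The main obstacle I expect is Step two/three: unlike the scheme case where $F$ is literally an isomorphism on the perfect site, here $F_{T_0}$ on the reduced fiber of a general enlargement is not invertible, so one cannot naively invert the functor on the nose. The honest fix is Ogus's: work with the \emph{localized} or \emph{universal} enlargements (the ``nice system of objects'' alluded to after \eqref{eqn:convtocrys}), for which Frobenius \emph{does} lift or becomes invertible up to isogeny, and then invoke the local nature of isocrystals \cite[Lem.~0.7.5]{ogus2} to glue. Equivalently, and perhaps more cleanly for the write-up, I would reduce to the local liftable situation and use the equivalence \eqref{1.8conv} with $\cD^\dagger_{X_W,\Q}$-modules: there, a Frobenius lift $F_W \colon X_W \to X_W$ exists locally, the pullback $F_W^*$ on $\cD^\dagger_{X_W,\Q}$-modules is the relevant functor, and one shows it is an equivalence because $F_W$ is finite flat of degree $p^{\dim X}$ and the $\cD^\dagger$-module structure (descent data along \emph{all} infinitesimal neighborhoods, rationally) provides the descent needed to invert it — this is where the passage to level-$m$ operators $\cD^{(m)}$ and the direct limit is essential, and is precisely what distinguishes $\Conv(X/K)$ from $\Crys(X/W)_\Q$, explaining the remark in the text about why one restricts to Conjecture~\ref{dejongconj}. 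Once the local statement is in hand, Zariski descent for $\Conv(X/K)$ (recalled in the preliminaries) globalizes it immediately.
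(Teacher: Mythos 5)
Your final route --- Zariski descent to the affine liftable case, the equivalence \eqref{1.8conv} with coherent $\cD^{\dagger}_{X_W,\Q}$-modules, and Frobenius descent for arithmetic $\cD$-modules --- is exactly the argument the paper relies on (the proposition is quoted as a special case of \cite[Cor.~4.10]{ogus1}, and the local statement you need is precisely Berthelot's Frobenius descent theorem \cite[Thm.~4.2.4]{berthelotII}). Just cite that theorem for the key step instead of re-deriving it from finite flatness of $F_W$, and drop the exploratory enlargement-level steps about extracting Frobenius roots and the $\Q$-linearization killing inseparable discrepancies, which do not work as stated and are not needed.
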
 

Hence, for any convergent isocrystal 
$\cE$ on $X/K$ and for any $n \in \N$, 
there is the unique 
convergent isocrystal $\cE^{(n)}$ on 
$X/K$ with ${F^*}^n \cE^{(n)} = \cE$ up to isomorphism. We call this 
$\cE^{(n)}$ the $F^n$-{\it division} of $\cE$.

\begin{rem}\label{rmk:Fdivconv}
The category $\Conv(X/K)$ is characterized as the intersection 
$$\bigcap_n {\rm Im}(F^*)^n$$ of the essential images of $(F^*)^n \,(n \in \N)$ 
in $\Crys(X/W)_{\Q}$. The inclusion $\Conv(X/K) \subseteq 
\bigcap_n {\rm Im}(F^*)^n$ follows from Proposition \ref{fdivision}. 
To prove the inclusion in the other direction, we take  
$\cE \in \bigcap_n {\rm Im}(F^*)^n$. To prove that $\cE$ belongs to 
$\Conv(X/K)$, we may work locally, and so 
we may assume that $X$ is liftable to a $p$-adic smooth formal 
scheme $X_W$ over $W$ endowed with a lift $F_W: X_W \lra X_W$ of 
Frobenius morphism $F$ on $X$. Then $F_W$ induces an equivalence of 
categories 
{\allowdisplaybreaks{
\begin{align*}
(F_W^*)^n: & 
\text{($(\Q \otimes \cO_{X_W})$-coherent left $(\Q \otimes \wh{\cD}_{X_W/W}^{(0)})$-modules)} \\ & \os{\simeq}{\lra} 
\text{($(\Q \otimes \cO_{X_W})$-coherent left $(\Q \otimes \wh{\cD}_{X_W/W}^{(n)})$-modules)} 
\end{align*}}}
such that the composition of it with the canonical functor 
{\allowdisplaybreaks{
\begin{align*}
& \text{($(\Q \otimes \cO_{X_W})$-coherent left $(\Q \otimes \wh{\cD}_{X_W/W}^{(n)})$-modules)} \\ & \lra 
\text{($(\Q \otimes \cO_{X_W})$-coherent left $(\Q \otimes \wh{\cD}_{X_W/W}^{(0)})$-modules)} 
\end{align*}}} 
is equal to the pullback functor $(F_W^*)^n$ on the category of 
$(\Q \otimes \cO_{X_W})$-coherent left $(\Q \otimes \wh{\cD}_{X_W/W}^{(0)})$-modules, 
by \cite[Thm.~4.1.3, Rmq.~4.1.4(v)]{berthelotII}. 
From this and the compatibility of the functor \eqref{mic-dq} with  
$F^*$ and $F_W^*$, we see that the 
$(\Q \otimes \cO_{X_W})$-coherent left 
$(\Q \otimes \wh{\cD}_{X_W/W}^{(0)})$-module 
corresponding to $\cE$ admits an action of 
$\Q \otimes \wh{\cD}_{X_W/W}^{(n)}$ for any $n$. 
Such actions for $n \in \N$ are consistent because 
$\Q \otimes \wh{\cD}_{X_W/W}^{(0)}$ is dense in 
$\Q \otimes \wh{\cD}_{X_W/W}^{(n)}$. Thus the actions induce 
the action of $\cD^{\dagger}_{X_W,\Q} := \varinjlim_n 
\Q \otimes \wh{\cD}_{X_W/W}^{(n)}$. Hence $\cE$ belongs to  
$\Conv(X/K)$ by the equivalence \eqref{1.8conv}. 
\end{rem}

\begin{rem} \label{rmk:Fdivcrys}
Proposition~\ref{fdivision} does not necessarily 
extend to $\Crys(X/W)_{\Q}$ via \eqref{eqn:convtocrys} 
even when $X$ is projective and smooth, 
as is shown in Proposition~\ref{prop:fdivcrys} below. 
\end{rem}

\medskip

Given $\cE$ in $\Conv(X/K)$ or $\Crys(X/W)_{\Q}$, 
a crystal $E \in \Crys (X/W) $ is called a {\it lattice  of 
$\cE$}  if it is $p$-torsion free and
 $\cE = \Q \otimes E $ in $\Crys (X/W)_{\Q}$. 
Out of any choice  $E \in \Crys (X/W) $ with $\cE = \Q \otimes E $ in $\Crys (X/W)_{\Q}$, one constructs a lattice as follows. The  surjective morphisms $E/{\rm Ker}(p^{n+1}) \to 
E/{\rm Ker}(p^{n})$ in $\Crys(X/W)$ become isomorphisms for $n$ large. Indeed, as $X$ is of finite type, it is enough to show it on an affine $X$, for which one applies \eqref{evformal}.   Then $E/{\rm Ker}(p^{n})$ for $n$ large is a lattice of $\cE$. Clearly, there are then many lattices of the same $\cE$.

\medskip

We now formulate the first main result, 
proved in Section~\ref{sec:proofthm1} (compare with \cite[Thm.~1.7, Cor.~1.10]{Shi14}). 

\begin{thm}\label{thm1}
Let $X$ be a connected smooth projective variety over $k$ with
trivial  \'etale fundamental group. 
If $\cE \in \Conv (X/K)$ is such that for  any $n \in \N$, 
the $F^n$-division $\cE^{(n)}$ of $\cE$ 
admits a lattice $E^{(n)}$ with $E_X^{(n)} \in \Coh(X)$ 
strongly $\mu$-semistable, then $\cE$ is constant. 
\end{thm}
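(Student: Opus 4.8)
The plan is to combine the three ingredients announced in the introduction. Write $r=\rk\cE$. By Proposition~\ref{fdivision} the Frobenius pullback is an equivalence on $\Conv(X/K)$, so the $F^n$-divisions $\cE^{(n)}$ exist with $(F^*)^n\cE^{(n)}\cong\cE$ and $\rk\cE^{(n)}=r$ for all $n$; moreover each $\cE^{(n)}$ again satisfies the hypothesis of the theorem, since the $F^m$-division of $\cE^{(n)}$ is $\cE^{(n+m)}$. Because Frobenius pullback of crystals is exact and preserves $p$-torsion-freeness, $G_n:=(F^*)^nE^{(n)}$ is again a lattice of $\cE$; by the compatibility of the evaluation functors \eqref{evxw}, \eqref{evxwn} and of \eqref{rest} with $F^*$, the value of $G_n$ on $X$ is $(G_n)_X=(F^n)^*(E^{(n)}_X)$ and its restriction to $\Crys(X/k)$ is $(F^*)^n$ of the restriction of $E^{(n)}$, hence an $F^n$-divisible crystal on $X/k$ whose underlying sheaf is $(F^n)^*(E^{(n)}_X)$. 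Since $E^{(n)}_X$ is strongly $\mu$-semistable, so is its Frobenius pullback $(G_n)_X$, and by Proposition~\ref{prop:chern} the Chern classes of $E^{(n)}_X$, hence those of $(G_n)_X$, all vanish.

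Next I would invoke the Gieseker-type statement of Propositions~\ref{prop:a}, \ref{prop:b}, \ref{prop:c}: using that $\pi_1^{\et}(X)=\{1\}$ and that, for coherent sheaves on $X$ with vanishing Chern classes, the Hilbert polynomial (hence the relevant moduli space of $\chi$-stable sheaves) depends only on $X$ and $r$, there is an integer $N_0=N_0(X,r)$ such that any crystal on $X/k$ which is $F^{N_0}$-divisible, with strongly $\mu$-semistable underlying sheaf of vanishing Chern classes, is constant. Applying this to $G_n$ for any $n\ge N_0$ shows that the restriction of $G_n$ to $\Crys(X/k)$ is isomorphic to $\cO_{X/k}^{\oplus r}$; in particular $\cE$ has a lattice whose restriction to $X/k$ is constant. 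Running the same argument with $\cE$ replaced by each $\cE^{(m)}$, the same holds for all the Frobenius divisions of $\cE$.

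Finally I would feed this into the crystalline deformation theory of Propositions~\ref{prop:e}, \ref{prop:d} and Corollary~\ref{cor:d}: the constancy on $X$ of $\cE$ and of all its Frobenius divisions promotes, step by step modulo $p^n$, to the existence of a lattice of $\cE$ that is constant modulo $p^n$ for every $n$ — the mechanism being that the Frobenius pullback acts on the crystalline $\Ext$-groups controlling the lifting obstructions in such a way that passing to a deep enough Frobenius division kills the obstruction. A compatible system of such trivializations modulo $p^n$, together with the equivalence \eqref{proj} $\Crys(X/W)\xrightarrow{\simeq}\varprojlim_n\Crys(X/W_n)$, then yields a lattice of $\cE$ isomorphic to $\cO_{X/W}^{\oplus r}$, and therefore $\cE\cong\Q\otimes\cO_{X/W}^{\oplus r}$ is constant.

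I expect the deformation step to be the main obstacle. One has to control the obstruction classes uniformly, verify that Frobenius-divisibility really annihilates them, and keep careful track of which lattice is being deformed — lattices of a fixed isocrystal being far from unique — so that the trivializations modulo $p^n$ can be chosen compatibly in the limit. By contrast, the Gieseker-type input, though technically substantial, is essentially the argument of \cite{esnaultmehta} reinforced by the semistability and Chern-class hypotheses, and the reductions in the first two paragraphs are formal consequences of the functoriality and Frobenius-compatibility set up in Section~\ref{sec:prel}.
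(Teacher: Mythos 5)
Your first two paragraphs follow the paper's route: the reduction to the lattices $G_n=(F^*)^nE^{(n)}$, the vanishing of Chern classes via Proposition~\ref{prop:chern}, the Gieseker-type statement (which in the paper is packaged as Proposition~\ref{prop:c}, built on Propositions~\ref{prop:a} and \ref{prop:b}), and the deformation input of Propositions~\ref{prop:e}, \ref{prop:d} and Corollary~\ref{cor:d} are all the right ingredients, used essentially as the paper uses them. The genuine gap is exactly the step you flag as ``the main obstacle'' and then do not carry out: Corollary~\ref{cor:d}, applied to $\cE^{(c+d(n-1))}$ and its lattice, produces for each $n$ a lattice $G^{(n)}=(F^*)^{c+d(n-1)}E^{(c+d(n-1))}$ of $\cE$ whose restriction to $\Crys(X/W_n)$ is constant --- but this lattice \emph{depends on $n$}, since it comes from a deeper and deeper Frobenius division. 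There is no a priori ``compatible system of trivializations modulo $p^n$'' to feed into \eqref{proj}: the $G^{(n)}$ for different $n$ are a priori unrelated inside $\cE$, and nothing guarantees that a trivialization of $G^{(n+1)}$ mod $p^{n+1}$ restricts to one of $G^{(n)}$ mod $p^n$, or that one can modify the lattices to make this so. Asserting that a compatible system exists is assuming the conclusion.

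The paper closes this gap by a different mechanism, which you would need to supply. One fixes the single lattice $E:=G^{(1)}$ (so $E_1\in\Crys(X/k)$ is constant), rescales so that $G^{(n)}\subseteq E$ but $G^{(n)}\not\subseteq pE$, and uses the mod-$p^n$ constancy of $G^{(n)}$ only to produce nonzero elements in the image of $H^0_{\crys}(X/W_n,E_n)\to H^0_{\crys}(X/W_m,E_m)$ for all $n\ge m$. Since $W_m$ is Artinian, these images stabilize, the system $\{H^0_{\crys}(X/W_n,E_n)\}_n$ satisfies Mittag--Leffler, and $H^0_{\crys}(X/W,E)$ is a nonzero free $W$-module of some rank $s\le r$. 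One then rules out $s<r$ by showing the quotient $Q=E/\bigl(H^0_{\crys}(X/W,E)\otimes_W\cO_{X/W}\bigr)$ is $p$-torsion free, repeating the $H^0$-argument for $Q$ to get $H^0_{\crys}(X/W,Q)\neq 0$, and contradicting this via the surjectivity of $H^0_{\crys}(q)$, which rests on Proposition~\ref{rem:eo}~(2), i.e.\ on $H^1_{\crys}(X/W,\cO_{X/W})=0$ coming from the triviality of $\pi_1^{\et}(X)$. Note that this cohomological input ($H^1$-vanishing) does not appear anywhere in your sketch, yet it is indispensable for the final identification $E\cong\cO_{X/W}^{\oplus r}$; Frobenius divisibility alone does not ``kill the obstruction'' to gluing the mod-$p^n$ trivializations of varying lattices.
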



We have the following corollary confirming the conjecture of 
de Jong for infinitesimal isocrystals, which will be also proved in 
Section~\ref{sec:proofthm1}. 

\begin{cor} \label{cor}
Let $X$ be a connected smooth projective variety over $k$ with 
trivial \'etale fundamental group. Then any infinitesimal isocrystal on 
$X$ is constant. 
\end{cor}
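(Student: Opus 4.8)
The plan is to deduce Corollary~\ref{cor} from Theorem~\ref{thm1}. Let $\cE$ be an infinitesimal isocrystal on $X$; via the fully faithful functor $\Phi'_\Q$ we regard it as a convergent isocrystal. The strategy is to verify the hypothesis of Theorem~\ref{thm1} for $\cE$ — namely that for every $n \in \N$ the $F^n$-division $\cE^{(n)}$ admits a lattice $E^{(n)}$ with $E^{(n)}_X$ strongly $\mu$-semistable — and then to transport the resulting constancy of $\cE$ from $\Conv(X/K)$ back to ${\rm Inf}(X/W)_\Q$.

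The first point is that every $\cE^{(n)}$ is again (the image of) an infinitesimal isocrystal. Since $F^*$ preserves the full subcategory ${\rm Inf}(X/W)_\Q$ of $\Conv(X/K)$ and is an equivalence of $\Conv(X/K)$ by Proposition~\ref{fdivision}, the induced endofunctor $F^*$ of ${\rm Inf}(X/W)_\Q$ is automatically fully faithful, so it suffices to show it is essentially surjective; its quasi-inverse then computes the $F^n$-divisions inside ${\rm Inf}(X/W)_\Q$, by uniqueness of $F^n$-divisions in $\Conv(X/K)$. Essential surjectivity can be checked on an affine open cover on which $X$ lifts to a $p$-adic smooth formal scheme $X_W$ carrying a lift $F_W$ of Frobenius; there ${\rm Inf}(X/W)_\Q$ is identified by \eqref{1.8infq} with the category of $(\Q \otimes \cO_{X_W})$-coherent continuous $(\Q \otimes \wh{\cD}_{X_W/W})$-modules, on which $F_W^*$ is essentially surjective — this is a form of Cartier descent for the full ring of differential operators, using that the level-$0$ connection underlying such a module has vanishing $p$-curvature (compare \eqref{eqn:can}), so that it descends along Frobenius together with its stratification. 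The local descents glue because the $F^n$-division is unique in $\Conv(X/K)$.

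Now fix $n$ and choose $G \in {\rm Inf}(X/W)$ representing $\cE^{(n)}$. Since the value of any infinitesimal crystal is locally free, $\Phi'(G)_X = G_X$ is locally free, hence $\Phi'(G)$ is locally free and in particular $p$-torsion free, so $E^{(n)} := \Phi'(G)$ is a lattice of $\cE^{(n)}$. Restricting $G$ along \eqref{restinf} to ${\rm Inf}(X/k)$ exhibits $E^{(n)}_X = G_X$ as the underlying $\cO_X$-module of an $\cO_X$-coherent $\cD_X$-module, hence of an infinitely Frobenius divisible vector bundle; such a bundle is strongly $\mu$-semistable with numerically trivial Chern classes, by Langer's control of the Harder--Narasimhan filtration under Frobenius pullback (this is the geometric input of \cite{esnaultmehta}). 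Theorem~\ref{thm1} therefore applies and yields an isomorphism $\cE \cong \bigoplus_{i=1}^r \cO_{X/K}$ in $\Conv(X/K)$. As $\Phi'_\Q$ is fully faithful and carries the structure isocrystal $\Q \otimes \cO_{X/W}$ to $\cO_{X/K}$, this isomorphism arises from an isomorphism $\cE \cong \bigoplus_{i=1}^r (\Q \otimes \cO_{X/W})$ in ${\rm Inf}(X/W)_\Q$; that is, $\cE$ is constant.

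I expect the main obstacle to be the first point — essential surjectivity of Frobenius pullback on ${\rm Inf}(X/W)_\Q$. Whereas Proposition~\ref{fdivision} is quoted from Berthelot's theory of arithmetic $\cD$-modules, here one needs a genuine Cartier-descent argument for the full ring of differential operators, and since there is no literal analogue of \eqref{cartierd} over a $p$-adic base, some care is required in passing between the levels $W_n$ (via \eqref{projinf}) and in the rationalization; once this is settled, the remaining steps are formal.
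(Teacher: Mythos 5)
Your overall strategy coincides with the paper's: verify the hypothesis of Theorem~\ref{thm1} by showing that each $F^n$-division of an infinitesimal isocrystal is again infinitesimal and has a lattice with well-behaved value on $X$. But the step you yourself single out as the main obstacle is a genuine gap as written. The essential surjectivity of $F^*$ on ${\rm Inf}(X/W)$ (equivalently, on $\cO_{X_W}$-coherent $\wh{\cD}_{X_W/W}$-modules after the local reduction via \eqref{1.8inf}) is \emph{not} obtained by the mod-$p$ observation that the underlying level-$0$ connection of a stratified module has vanishing $p$-curvature: Cartier descent kills the connection modulo $p$, but descending the module together with its full stratification across a lift $F_W$ of Frobenius over $W_n$ for $n\geq 2$ is a delicate statement in mixed characteristic. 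This is exactly Proposition~\ref{ber} of the paper, whose proof (after Zariski descent and choice of a local lift of $X$ and of $F$) is an appeal to Berthelot's theorem \cite[Thm.~2.1]{berthelotdivided} on Frobenius-divided modules in mixed characteristics; your sketch does not supply that theorem, so either cite it or reproduce its argument. Note also that the paper performs the descent and the gluing at the integral level, using Zariski descent for ${\rm Inf}(X/W)$, which is cleaner than gluing rationalized local descents via uniqueness of $F^n$-divisions in $\Conv(X/K)$.

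Your second step also deviates from the paper in a way that costs you more than it buys. You want $E^{(n)}_X$ to be strongly $\mu$-semistable because it underlies a stratified bundle, invoking ``Langer's control of the Harder--Narasimhan filtration.'' The general assertion that values of stratified ($F$-divided) bundles on an arbitrary smooth projective variety are strongly $\mu$-semistable with numerically trivial Chern classes is true, but it is not a one-line consequence of the HN bounds (the elementary divisibility argument only gives semistability of the divisions $E^{(n+m)}_X$ for $m\gg 0$, not of $E^{(n)}_X$ itself); it requires boundedness input and is essentially the numerical flatness of $F$-divided sheaves, for which you would need a precise reference (e.g.\ in the circle of \cite{langersfund}) or a proof. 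The paper avoids this entirely: since $\pi_1^{\et}(X)=\{1\}$ is a standing hypothesis, the restriction of $E^{(n)}$ to ${\rm Inf}(X/k)$ is constant by \cite{esnaultmehta}, so $E^{(n)}_X\cong\cO_X^r$ is trivially strongly $\mu$-semistable. I recommend replacing your semistability argument by this direct use of Gieseker's conjecture; your final transport of constancy back through the fully faithful $\Phi'_{\Q}$ is fine and slightly more explicit than the paper.
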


A non-zero $\cE \in \Conv (X/K)$ is called {\it irreducible} 
if it is in its category (recall it is abelian), i.e. 
if it does not admit any non-zero strict subobject.  In general, every object admits  
a Jordan-H\"older filtration. Its irreducible subquotients are called {\it irreducible constituents}.
Using Theorem~\ref{thm1},  we shall  prove the 
following theorem in Section~\ref{sec:proofthm2}. 

\begin{thm}\label{thm2}
Let $X$ be a connected smooth projective  variety over $k$ with 
trivial \'etale fundamental group. If $\cE \in \Conv (X/K)$  satisfies either of the following conditions: 
\begin{itemize}
\itemsep0em 
\item[(1)]
 any irreducible constituent of $\cE$ is of rank $1$;
 \item[(2)] 
 $\mu_{\max}(\Omega^1_X) < 2$ and 
any irreducible constituent of $\cE$ is of rank $\leq 2$;
\item[(3)] $\mu_{\max}(\Omega^1_X) < 1$ and 
any irreducible constituent of $\cE$ is of rank $\leq 3$;
\item[(4)]  $r \geq 4$, $\mu_{\max}(\Omega^1_X) < \dfrac{1}{N(r)}$ and 
any irreducible constituent of $\cE$ is of rank $\leq r$, where 
$N(r) := \displaystyle\max_{a, b \geq 1, a+b \leq r} {\rm lcm}(a,b)$. 
\end{itemize}
then $\cE$ is constant. 
\end{thm}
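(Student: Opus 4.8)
The plan is to deduce Theorem~\ref{thm2} from Theorem~\ref{thm1}, using the Langton-type existence result (Proposition~\ref{prop:langton}), the vanishing of the Chern classes of the value of a lattice of a convergent isocrystal (Proposition~\ref{prop:chern}), and the slope estimate for crystals (Proposition~\ref{prop:mr}), through which alone the numerical hypotheses on $\mu_{\max}(\Omega^1_X)$ enter.

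First I would reduce to the case in which $\cE$ is irreducible. Each irreducible constituent $\cE_i$ of $\cE$ has rank $\le r$ by assumption, and since the hypothesis on $\Omega^1_X$ concerns $X$ alone, $\cE_i$ again satisfies the hypotheses of the theorem. Granting that an irreducible $\cE_i$ as in the theorem is constant, $\cE_i$ must then be isomorphic to $\cO_{X/K}$, so a Jordan--H\"older filtration of $\cE$ has all its graded pieces isomorphic to $\cO_{X/K}$. The class of an extension of $\cO_{X/K}$ by a finite direct sum of copies of $\cO_{X/K}$ lies in $H^1_{\conv}(X/K,\cO_{X/K})$, which by \cite[Thm.~0.7.7]{ogus2} equals $H^1_{\crys}(X/W,\cO_{X/W})_{\Q}$, of $K$-dimension the first $\ell$-adic Betti number of $X$; this vanishes because $\pi_1^{\et}(X)=\{1\}$ forces $H^1_{\et}(X,\Q_\ell)=0$. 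Hence the filtration splits and $\cE$ is constant. From now on I may therefore assume $\cE$ irreducible, of rank $r'\le r$.

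By Proposition~\ref{fdivision} the functor $F^*$ on $\Conv(X/K)$ is an equivalence of abelian categories, and it preserves the rank; hence every $F^n$-division $\cE^{(n)}$ is again irreducible of rank $r'$. Since $\cE^{(n)}$ is irreducible, Proposition~\ref{prop:langton} provides a lattice $E^{(n)}\in\Crys(X/W)$ of $\cE^{(n)}$ whose restriction $E^{(n)}_1\in\Crys(X/k)=\MIC(X)^{\qn}$ is torsion free and $\mu$-semistable as a crystal, and by Proposition~\ref{prop:chern} the value $E^{(n)}_X\in\Coh(X)$ has vanishing Chern classes. I claim $E^{(n)}_X$ is strongly $\mu$-semistable; the mechanism is a bootstrap over Frobenius levels. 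By \eqref{eqn:can} and Cartier descent \eqref{cartierd}, for each $m\ge 1$ the crystal $(F^m)^*E^{(n)}_1$ equals $\bigl((F^m)^*E^{(n)}_X,\nabla_{\rm can}\bigr)\in\MIC_0(X)$, whose subcrystals are precisely the objects $(F^*N,\nabla_{\rm can})$ with $N\subseteq (F^{m-1})^*E^{(n)}_X$ an $\cO_X$-submodule; hence $(F^m)^*E^{(n)}_1$ is $\mu$-semistable as a crystal if and only if $(F^{m-1})^*E^{(n)}_X$ is $\mu$-semistable as an $\cO_X$-module. On the other hand, Proposition~\ref{prop:mr} furnishes exactly the implication that, under the slope bound of whichever of the cases (1)--(4) applies, a crystal of rank $\le r$ with vanishing Chern classes which is $\mu$-semistable as a crystal has $\mu$-semistable value sheaf. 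Starting from the $\mu$-semistability of $E^{(n)}_1$ as a crystal and alternating these two implications, an induction on $m$ gives that $(F^m)^*E^{(n)}_X$ is $\mu$-semistable for all $m\ge 0$, i.e. $E^{(n)}_X$ is strongly $\mu$-semistable. Thus for every $n$ the $F^n$-division $\cE^{(n)}$ admits a lattice with strongly $\mu$-semistable value, so Theorem~\ref{thm1} shows $\cE$ is constant; being irreducible, $\cE\cong\cO_{X/K}$, which together with the previous paragraph completes the proof.

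I expect the main obstacle to be Proposition~\ref{prop:mr}: passing from $\mu$-semistability as a crystal, a condition controlling only subcrystals, to $\mu$-semistability of the underlying $\cO_X$-module. A destabilizing subsheaf generates a subcrystal only after iterated application of the connection, each step worsening the slope by at most $\mu_{\max}(\Omega^1_X)$; combining this with the Bogomolov-type constraint forced by the vanishing of all Chern classes produces the thresholds, where $N(r)=\max_{a+b\le r}{\rm lcm}(a,b)$ enters because a Harder--Narasimhan filtration of a torsion-free sheaf of rank $\le r$ can have steps of ranks $a$ and $b$ and one must clear the denominators of the resulting slopes; this is the point requiring Langer-type techniques, and it is why the small-rank cases (1)--(3) admit the sharper bounds $0,2,1$. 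A secondary, more routine, technical point is Proposition~\ref{prop:langton} itself, namely running Langton's semistable-reduction procedure on the lattices of the irreducible (hence semistable) convergent isocrystal $\cE^{(n)}$ so as to reach one whose reduction modulo $p$ is $\mu$-semistable as a crystal.
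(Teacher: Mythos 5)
Your argument is correct and follows the paper's own route: reduce to irreducible $\cE$ via a Jordan--H\"older filtration and the vanishing of $H^1_{\conv}(X/K,\cO_{X/K})$, then treat the irreducible case by combining Propositions~\ref{prop:chern}, \ref{prop:langton} and \ref{prop:mr} with Theorem~\ref{thm1}. The only cosmetic differences are that your Cartier-descent bootstrap is unnecessary, since Proposition~\ref{prop:mr} already concludes strong $\mu$-semistability of the value sheaf directly, and that the paper gets $H^1_{\crys}(X/W,\cO_{X/W})=0$ from Proposition~\ref{rem:eo}~(2) (triviality of ${\rm Pic}^0_{\rm red}$) rather than from the crystalline/$\ell$-adic Betti number comparison you invoke.
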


\begin{cor} \label{rk1negativeslope}
 Let $X$ be a connected smooth projective  variety over $k$ with 
trivial \'etale fundamental group. Then
\begin{itemize}
\itemsep0em 
\item[(1)] any $\cE\in \Conv (X/K)$ of rank $1$ is constant;
\item[(2)] if $\mu_{\max}(\Omega^1_X) \leq 0$,  then any $\cE\in \Conv (X/K)$ is constant.
\end{itemize}
\end{cor}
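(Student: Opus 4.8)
The plan is to deduce Corollary~\ref{rk1negativeslope} from Theorem~\ref{thm2} by checking that the hypotheses of Theorem~\ref{thm2} are met under the stated slope conditions. The key point is that a rank condition on the irreducible constituents of $\cE$ can be extracted from a slope condition on $\Omega^1_X$ together with a bound on the rank of $\cE$ itself. For part (1), I would simply observe that an $\cE \in \Conv(X/K)$ of rank $1$ is automatically irreducible, so all its irreducible constituents have rank $1$, and Theorem~\ref{thm2}~(1) applies directly with no hypothesis on $\mu_{\max}(\Omega^1_X)$ at all. This is immediate.

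For part (2), assume $\mu_{\max}(\Omega^1_X) \leq 0$ and let $\cE \in \Conv(X/K)$ be arbitrary, say of rank $r$. If $r = 1$ we are done by (1). If $r \geq 2$, I would invoke the appropriate case of Theorem~\ref{thm2}: any irreducible constituent $\cE'$ of $\cE$ has rank $r' \leq r$. The bounds $\mu_{\max}(\Omega^1_X) \leq 0$ certainly imply $\mu_{\max}(\Omega^1_X) < 2$, $< 1$, and $< \tfrac{1}{N(r)}$ for every $r \geq 4$ (since $N(r) \geq 1$, the quantity $\tfrac{1}{N(r)}$ is positive). So whichever of cases (2), (3), (4) of Theorem~\ref{thm2} corresponds to the value of $r$ applies, and in each of those cases the rank bound on $\cE$ is exactly the rank bound on the irreducible constituents that the theorem requires. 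Hence $\cE$ is constant. The point is that Theorem~\ref{thm2} has been formulated precisely so as to cover, for every rank $r$, the case $\mu_{\max}(\Omega^1_X) \leq 0$; the corollary merely aggregates those cases and notes that the ranks of constituents never exceed the rank of $\cE$.

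I do not anticipate a serious obstacle here: the corollary is an immediate packaging of Theorem~\ref{thm2} once one notices that $\mu_{\max}(\Omega^1_X) \leq 0$ forces all the strict inequalities appearing in the four cases, and that the $r$-th case of Theorem~\ref{thm2} handles an isocrystal of rank $r$ by constraining its constituents (which have rank $\leq r$). The only mild subtlety worth a sentence is that a rank-$1$ object is irreducible, which is why case (1) of Theorem~\ref{thm2} needs no slope hypothesis, matching case (1) of the corollary; for $r \geq 2$ one passes through the Jordan--H\"older filtration, whose existence is recalled just before Theorem~\ref{thm2}.

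\begin{proof}
(1) An $\cE \in \Conv(X/K)$ of rank $1$ admits no non-zero strict subobject, hence is irreducible, so its unique irreducible constituent is $\cE$ itself, of rank $1$. Thus $\cE$ satisfies condition (1) of Theorem~\ref{thm2}, and therefore $\cE$ is constant.

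(2) Assume $\mu_{\max}(\Omega^1_X) \leq 0$ and let $\cE \in \Conv(X/K)$ be of rank $r$. If $r \leq 1$ then $\cE$ is constant by (1). If $r = 2$, then every irreducible constituent of $\cE$ has rank $\leq 2$ and $\mu_{\max}(\Omega^1_X) \leq 0 < 2$, so $\cE$ is constant by Theorem~\ref{thm2}~(2). If $r = 3$, then every irreducible constituent has rank $\leq 3$ and $\mu_{\max}(\Omega^1_X) \leq 0 < 1$, so $\cE$ is constant by Theorem~\ref{thm2}~(3). If $r \geq 4$, then $N(r) \geq 1$, so $\tfrac{1}{N(r)} > 0 \geq \mu_{\max}(\Omega^1_X)$, and every irreducible constituent of $\cE$ has rank $\leq r$; hence $\cE$ is constant by Theorem~\ref{thm2}~(4). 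In all cases $\cE$ is constant.
\end{proof}
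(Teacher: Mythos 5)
Your proof is correct and is exactly the paper's (implicit) argument: the corollary is stated as an immediate consequence of Theorem~\ref{thm2}, with (1) following from case (1) since a rank~$1$ object is irreducible, and (2) following because $\mu_{\max}(\Omega^1_X)\le 0$ forces the strict inequalities in cases (2)--(4) for every rank. (The paper also records a separate cohomological proof of (1) for $p\ge 3$ via Propositions~\ref{rem:eo} and~\ref{prop:rk1lattice}, but that is an alternative, not the main route.)
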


By \cite[Thm.0.1]{Lan15b},  simply connected  non-uniruled  Calabi-Yau varieties of dimension $d$ in characteristic $\ge (d-1)(d-2) $ fulfill condition (2). However, even  if  many examples of  Calabi-Yau varieties which are not liftable to characteristic $0$ are known,  it is not clear whether some of them are not uniruled. If $X$ is simply connected and   uniruled, it is likely that one can show that $\cE\in \Conv (X/K)$ is constant directly by geometric method.

\medskip

When $p \geq 3$, there is a purely cohomological proof of 
Corollary~\ref{rk1negativeslope} (1). 
The following proposition is due to the first named author and Ogus. 
\begin{prop}\label{rem:eo}
 Let $X$ be a connected smooth projective  variety over $k$ with 
trivial \'etale fundamental group. Then 
\begin{itemize}
\itemsep0em 
\item[(1)]  for $p\ge 3$, any  locally free  $E\in \Crys (X/W) $ of rank $1$ is constant;
\item[(2)] extensions in  $\Crys (X/W)_{\Q}$ of $\Q \otimes \cO_{X/W}$ by itself are constant. 
\end{itemize}
\end{prop}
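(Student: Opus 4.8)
The argument is cohomological, and the hypothesis enters only through the vanishing of the first Betti number. Since $\pi_1^{\et}(X)=\{1\}$ we have $H^1_{\et}(X,\Z/\ell^n)=\Hom(\pi_1^{\et}(X),\Z/\ell^n)=0$ for every prime $\ell$ and every $n\ge 1$, hence $H^1_{\et}(X,\Q_\ell)=0$; by the comparison of $\ell$-adic and crystalline Betti numbers this gives $H^1_{\crys}(X/W,\cO_{X/W})\otimes_W K=0$, and since $H^1_{\crys}(X/W,\cO_{X/W})$ is torsion free it vanishes altogether.

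Part (2) is then immediate: an extension of $\Q\otimes\cO_{X/W}$ by itself in $\Crys(X/W)_{\Q}$ is classified by $\Ext^1_{\Crys(X/W)_{\Q}}(\Q\otimes\cO_{X/W},\Q\otimes\cO_{X/W})$, which, $\Crys(X/W)_{\Q}$ being the isogeny category of $\Crys(X/W)$, equals $\Q\otimes\Ext^1_{\Crys(X/W)}(\cO_{X/W},\cO_{X/W})=\Q\otimes H^1((X/W)_{\crys},\cO_{X/W})=H^1_{\crys}(X/W,\cO_{X/W})\otimes\Q=0$. Hence every such extension splits and is constant; no hypothesis on $p$ is needed here.

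For (1), isomorphism classes of locally free rank $1$ crystals on $(X/W)_{\crys}$ form the group ${\rm Pic}((X/W)_{\crys})=H^1((X/W)_{\crys},\cO^{*}_{X/W})$, with $E\mapsto E_X$ the evaluation \eqref{evxw} at $X=(X\os{\id}{\hra}X,0)$. The plan is to run the exponential sequence
\[
0\lra\cJ_{X/W}\os{\exp}{\lra}\cO^{*}_{X/W}\lra\cO^{*}_{X}\lra 0
\]
on $(X/W)_{\crys}$, where $\cJ_{X/W}=\Ker(\cO_{X/W}\to\cO_X)$ is the PD-ideal and $\cO^{*}_X$ the pullback of the Zariski unit sheaf. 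This is the only place $p\ge 3$ is used: the $p$-adic exponential $\exp(x)=\sum_n\gamma_n(x)$, formed with the divided powers of $\cJ_{X/W}$, converges on $\cJ_{X/W}$ precisely when $p\ge 3$ (for $p=2$ the terms $\gamma_{2^m}(px)$ do not tend to $0$), so it identifies $\cJ_{X/W}$ with $1+\cJ_{X/W}$. Taking cohomology, using $H^{*}((X/W)_{\crys},\cO^{*}_X)=H^{*}(X,\cO^{*}_X)$ and the surjection $H^0((X/W)_{\crys},\cO^{*}_{X/W})=W^{*}\twoheadrightarrow k^{*}$, one obtains $0\to H^1((X/W)_{\crys},\cJ_{X/W})\to{\rm Pic}((X/W)_{\crys})\to{\rm Pic}(X)$; and from $0\to\cJ_{X/W}\to\cO_{X/W}\to\cO_X\to 0$ together with $H^0_{\crys}(X/W,\cO_{X/W})=W\twoheadrightarrow k=H^0(X,\cO_X)$ one gets $H^1((X/W)_{\crys},\cJ_{X/W})\hra H^1_{\crys}(X/W,\cO_{X/W})=0$. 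Thus ${\rm Pic}((X/W)_{\crys})\hra{\rm Pic}(X)$, $E\mapsto E_X$, with image $\Ker\bigl({\rm Pic}(X)\to H^2((X/W)_{\crys},\cJ_{X/W})\bigr)$.

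It remains to see $E_X\cong\cO_X$. The composition of that last map with $H^2((X/W)_{\crys},\cJ_{X/W})\to H^2_{\crys}(X/W,\cO_{X/W})$ is the crystalline first Chern class, injective on ${\rm NS}(X)$ modulo torsion, so $E_X\in{\rm Pic}^{\tau}(X)$. Since $b_1(X)=0$ the Picard scheme of $X$ is $0$-dimensional, so ${\rm Pic}^0_{\rm red}(X)=\Spec k$, hence ${\rm Pic}^{\tau}(X)(k)={\rm NS}(X)_{\rm tors}$ is finite; and since $\pi_1^{\et}(X)=\{1\}$ its prime-to-$p$ part vanishes (Kummer sequence, $k^{*}$ divisible, $H^1_{\et}(X,\mu_\ell)=0$), so $E_X$ is a $p$-power torsion line bundle. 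One is thus reduced to showing that the crystalline first Chern class is injective on the $p$-torsion of ${\rm Pic}(X)$ — equivalently, that a $p$-power torsion line bundle which is the value of a crystal over $W$ is trivial. This bookkeeping of the infinitesimal/$p$-torsion part of the Picard group, via the Dieudonné module of ${\rm Pic}^{\tau}(X)[p^{\infty}]$ inside $H^{*}_{\crys}(X/W)$ (again exploiting $H^1_{\crys}(X/W,\cO_{X/W})=0$), rather than the exponential sequence, is the real content of (1) and the step I expect to be the main obstacle; an alternative finish uses the identities $((F^{*})^{j}E)_X=E_X^{\otimes p^{j}}$, so that if $E_X$ has order $p^{r}$ then $(F^{*})^{r}E\cong\cO_{X/W}$, and then needs injectivity of $F^{*}$ on ${\rm Pic}((X/W)_{\crys})$.
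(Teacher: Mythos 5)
Your part (2) is essentially the paper's argument and is fine: everything reduces to $H^1_{\crys}(X/W,\cO_{X/W})=0$. (The paper deduces this from ${\rm Pic}^0_{\rm red}(X)=0$; you deduce it from $b_1=0$ together with torsion-freeness of $H^1_{\crys}(X/W)$, which is true but asserted without proof --- it follows, e.g., from the universal coefficient sequence and $H^0_{\crys}(X/W)\otimes_W k\os{\sim}{\lra}H^0_{\dR}(X/k)$.)

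For (1) there are two genuine gaps. First, the exponential sequence cannot be run on the full crystalline site: for an object $(U\hra T,\delta)$ of $(X/W)_{\crys}$ the divided powers of sections of $J=\Ker(\cO_T\ra\cO_U)$ are not nilpotent (on a PD-envelope the elements $\gamma_m(x)$ are nonzero for all $m$, and $\cO_T$ carries no topology in which $\sum_m\gamma_m(x)$ could converge), so $\exp$ is not defined there for any $p$; the condition $p\ge 3$ only controls the divided powers of $p$ in $W_n$. The missing move --- and the paper's first step --- is the reduction to the nilpotent crystalline site $(X/W)_{\rm Ncrys}$ via \cite[IV~Rmq.~1.6.6]{berthelotbook}; there $\exp$ and $\log$ are locally finite sums once $p\ge 3$, and your two exact sequences, the vanishing of $H^1$ of $J$, and the injection into ${\rm Pic}(X)$ go through as in the paper. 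Second, and more seriously, the step you yourself defer as ``the main obstacle'' is precisely the remaining content of (1): one must show that the boundary map kills no nonzero class coming from a crystal, i.e.\ that ${\rm NS}(X)\otimes\Z_p\lra H^2_{\crys}(X/W,\cO_{X/W})$ is injective, in particular on $p$-power torsion. The paper supplies exactly this by identifying the composite $\gamma\circ\log\circ\beta$ with the crystalline Chern class map via \cite[I~(5.1.7)]{gros} and invoking its injectivity \cite[II~Prop.~6.8]{illusie}; without this input your argument stops at ``$E_X$ is a $p$-power torsion line bundle''. Your sketched alternative finish does not close the gap either: for a crystal one only has $((F^*)^jE)_X\cong(E_X)^{\otimes p^j}$, not $(F^*)^jE\cong E^{\otimes p^j}$, and the injectivity of $F^*$ on rank one crystals that it would require is not established. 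So as written (1) is not proved; adding the reduction to the nilpotent site and the Gros--Illusie identification would complete it along the paper's lines.
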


\begin{proof}
Since the \'etale fundamental group  is trivial, $H^1(X,\cO_X^\times)$ has no torsion 
of order prime to $p$. So ${\rm Pic}^0_{\rm red}(X) = 0$ and so 
$H^1_{\rm crys}(X/W, \cO_{X/W})= 0$. 
This proves (2). 

To prove (1), it suffices to prove 
the similar assertion for crystals on 
the nilpotent crystalline site $(X/W)_{\rm Ncrys}$, by \cite[IV~Rmq.~1.6.6]{berthelotbook}.

One has the  exact sequences  on $(X/W)_{\rm Ncrys}$, 
\begin{align}
& 1 \lra K \lra \cO_{X/W}^\times \lra \cO_X^\times \lra 1, \label{eq:eo0} \\ 
& 0 \lra J \lra \cO_{X/W} \lra \cO_X \lra 0, \label{eq:eo-1} 
\end{align}
defining 
 $J$  and $K$. The $p$-adic logarithm
$\log: K \ra J$ and exponential $\exp: J \ra K$ functions are well defined and  are isomorphisms on $(X/W)_{\rm Ncrys}$. Hence, 
from the exact sequences \eqref{eq:eo0}, \eqref{eq:eo-1}, 
we obtain on $(X/W)_{\rm Ncrys}$ the exact sequences 
\begin{equation}
0\to 
 H^1_{\rm Ncrys}(X/W, J) \to H^1_{\rm Ncrys}(X/W, \cO_{X/W}^\times) 
\os{\alpha}{\ra} H^1(X,\cO_X^\times) \os{\beta}{\ra} H^2_{\rm Ncrys}(X/W, J),
\label{eq:eo2} 
\end{equation}
and 
\begin{equation} 
0 \ra H^1_{\rm Ncrys}(X/W, J) 
\os{\gamma}{\ra} H^1_{\rm Ncrys}(X/W, \cO_{X/W}). \label{eq:eo1} 
\end{equation}

From \eqref{eq:eo1} and the vanishing $H^1_{\rm Ncrys}(X/W, \cO_{X/W}) = 
H^1_{\crys}(X/W, \cO_{X/W}) = 0$ (where the first equality follows from \cite[V~2.4]{berthelotbook}), one deduces
$H^1_{\rm Ncrys}(X/W, J) \allowbreak = 0$. Hence $\alpha$ is injective. 
Moreover, by 
\eqref{eq:eo2}, we obtain the commutative 
diagram 
\begin{equation}\label{eq:eo3}
\begin{CD}
H^1_{\rm Ncrys}(X,\cO_{X/W}^\times) 
@>{\alpha}>> H^1(X,\cO_X^\times) @>{\gamma \circ \log \circ \beta}>> 
H^2_{\rm Ncrys}(X/W, \cO_{X/W}) \\ 
@. @V{\delta}VV @| \\ 
@. H^1(X,\cO_X^\times) \otimes \Z_p @>{\epsilon}>> 
H^2_{\rm Ncrys}(X/W, \cO_{X/W}), 
\end{CD}
\end{equation}
where $\delta$ is induced by the inclusion $\Z \lra \Z_p$ and 
$\epsilon$ is the $\Z_p$-linearization of 
$\gamma \circ \log \circ \beta$. 
By construction, 
the upper horizontal line is a complex. On the other hand, 
$\delta$ is injective because $H^1(X,\cO_X^\times )$ has no 
torsion of order prime to $p$. Also, by the commutativity of the 
diagram \cite[I~(5.1.7)]{gros}, $\epsilon$ is identified with 
the map ${\rm NS}(X) \otimes \Z_p \lra H^2_{\rm crys}(X/W, \cO_{X/W})$ 
given in \cite[II Prop.~6.8]{illusie} and so it is injective. 
Then an easy diagram chase shows that 
$H^1_{\rm Ncrys}(X,\cO_{X/W}^\times) = 0$. This proves (1).
\end{proof}

Proposition \ref{rem:eo} together with the following proposition 
implies Corollary \ref{rk1negativeslope} (1) when $p \geq 3$. 

\begin{prop}\label{prop:rk1lattice}
Let $X$ be a smooth variety over $k$. Then 
any isocrystal on $X$ which is filtered such that 
the associated graded isocrystal is a sum of rank $1$ isocrystals
  admits a locally free lattice. 
\end{prop}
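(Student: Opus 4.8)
The plan is to induct on the length of the filtration, reducing to the rank-$1$ case and an extension argument. More precisely, suppose $\cE \in \Crys(X/W)_{\Q}$ is filtered $0 = \cE_0 \subset \cE_1 \subset \cdots \subset \cE_m = \cE$ with each $\gr_i = \cE_i/\cE_{i-1}$ a rank-$1$ isocrystal. By induction we may assume $\cE_{m-1}$ admits a locally free lattice $E'$, and we must produce a locally free lattice of $\cE$ out of an extension
\begin{equation*}
0 \lra \cE_{m-1} \lra \cE \lra \gr_m \lra 0.
\end{equation*}
The first step is the base case $m=1$: a rank-$1$ isocrystal $\cL$ admits a locally free lattice. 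Here I would work Zariski-locally — using that $\Crys(X/W)$ satisfies Zariski descent and that "locally free" can be checked on the value $E_X$ — and pick any lattice $L$ of $\cL$ (as constructed in the paragraph preceding the statement, out of any $E$ with $\Q\otimes E = \cL$, by killing $p$-power torsion). The value $L_X$ is a rank-$1$ coherent $\cO_X$-module which is $p$-torsion free; its double dual $L_X^{\vee\vee}$ is a line bundle, and the crystal structure (equivalently the integrable connection on a local lift $X_W$, via \eqref{evformal}, \eqref{mic-d}) extends across the locus where $L_X \hookrightarrow L_X^{\vee\vee}$ fails to be an isomorphism because that locus has codimension $\geq 2$ and the connection is a differential-operator-theoretic datum that is determined in codimension one — so $L^{\vee\vee}$ carries a crystal structure making it a locally free lattice of $\cL$.

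The second step handles the extension. Choose a locally free lattice $E'$ of $\cE_{m-1}$ (inductive hypothesis) and a locally free lattice $L$ of $\gr_m$ (base case). After multiplying the extension class by a suitable power of $p$ — which changes $\cE$ only up to isomorphism in $\Crys(X/W)_{\Q}$, hence does not affect the conclusion — the extension of isocrystals is represented by an honest extension of crystals
\begin{equation*}
0 \lra E' \lra E \lra L \lra 0
\end{equation*}
in $\Crys(X/W)$; this is possible because $\Ext^1_{\Crys(X/W)_{\Q}}(\gr_m, \cE_{m-1}) = \Q \otimes \Ext^1_{\Crys(X/W)}(L, E')$, so every $\Q$-class is $p^N$ times an integral one. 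Such an $E$ is automatically $p$-torsion free (it is an extension of $p$-torsion-free crystals) and locally free, since by right-exactness of evaluation its value $E_X$ sits in a short exact sequence $0 \to E'_X \to E_X \to L_X \to 0$ of coherent sheaves with $E'_X$, $L_X$ locally free, so $E_X$ is locally free and $E$ is a locally free lattice of $\cE$.

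The main obstacle I expect is the base case, specifically making rigorous that the crystal (equivalently $\wh\cD^{(0)}_{X_W/W}$-module, or stratification) structure on a rank-$1$ $p$-torsion-free lattice extends to its reflexive hull. One clean way: work with the stratification/$\MIC$ description \eqref{evformal}, \eqref{mic-d} on an affine lift $X_W$ where $X$ is smooth; the connection $\nabla\colon L_{X_W} \to L_{X_W}\otimes\Omega^1$ on the rank-$1$ module $L_{X_W}$ is given by $\nabla(s) = s\otimes\omega$ for a section $s$ generating $L_{X_W}$ generically and a $1$-form $\omega$ with at worst poles along the codimension-$\geq 2$ bad locus; but a reflexive (here: locally free, as line bundles are reflexive) rank-$1$ sheaf and a $1$-form with poles only in codimension $\geq 2$ have no poles at all by Hartogs/normality, so $\nabla$ extends. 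One must also check the cocycle/quasi-nilpotence conditions persist, which is automatic since they hold generically and the relevant sheaves are torsion free. An alternative, avoiding reflexive hulls, is to observe that any rank-$1$ isocrystal is, after tensoring with an integral power of $p$, already represented by a crystal whose value is a line bundle — using that the set of lattices is cofinal and that twisting a lattice by a line bundle pulled back via the crystal structure does not change the isocrystal class — but the codimension-$2$ extension argument above is the most transparent route.
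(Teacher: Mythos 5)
Your overall strategy coincides with the paper's: in rank $1$, pass to a local formal lift, replace the ($p$-torsion free, generically invertible) module by its reflexive hull, and extend the connection across the codimension $\geq 2$ bad locus; in higher rank, d\'evissage through the filtration, rescaling the extension by a $p$-power so that it is realized by an extension of the chosen locally free lattices. One caveat on your first formulation of the base case: taking the double dual of the value $L_X$ on $X$ is not an operation on crystals --- a lattice lives on $(X/W)_{\crys}$, and modifying its value on $X$ alone produces nothing; moreover $L_X$ need not be torsion free even though $L$ is $p$-torsion free (controlling the torsion of the reduction is precisely the content of the Langton-type Proposition~\ref{prop:langton}). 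Your ``clean way'' on the lift is the correct one and is what the paper does: with $X_W = \Spf(A)$ and $M$ the value of the lattice via \eqref{evformal}, the non-locally-free locus of $M$ on $\Spec(A)$ has codimension $\geq 2$ (outside $V(p)$ the module $M[1/p]$ is invertible, and at the codimension-one points of $V(p)$ the local ring is a DVR, so the $p$-torsion free rank-$1$ module is free there), and the sections over the complement form an invertible module to which $\nabla$ extends by the projection formula/Hartogs, quasi-nilpotence being a local condition on sections.

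The genuine gap is globalization. The reflexive-hull construction is local (it depends on a choice of lift), and your appeal to Zariski descent does not by itself glue the resulting local lattices: on overlaps they are two locally free rank-$1$ lattices of the same isocrystal, hence a priori agree only up to a $p$-power, and one must rescale and verify a cocycle condition. This is exactly what the paper's Lemma~\ref{lem:rk1lattice} supplies: for rank-$1$ locally free lattices one has $\Hom_{\Crys(X/W)}(E,E') = p^nW$, so any isomorphism $\Q \otimes E \cong \Q \otimes E'$ identifies $p^nE$ with $E'$ for some $n$, which allows the local pieces to be normalized and glued; without this rigidity (or an argument that the reflexive hull of one fixed global lattice is canonical inside $\Q \otimes$ independently of the chosen lift) your base case only produces locally free lattices locally, whereas the statement is global. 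Secondly, in the extension step you assert $\Ext^1_{\Crys(X/W)_{\Q}}(\cE/\cE_{m-1}, \cE_{m-1}) = \Q \otimes \Ext^1_{\Crys(X/W)}(L,E')$; the surjectivity you actually use is not automatic and is what the paper proves concretely: starting from an arbitrary lattice $E$ of $\cE$, the induced sequence $0 \to a^{-1}(\Ker(b)) \to E \to b(E) \to 0$ of crystals has a lattice of $\cE_{m-1}$ as sub and a lattice of the quotient as quotient; after $p$-power rescaling there are injections $L \to b(E)$ and $a^{-1}(\Ker(b)) \to E'$, and pulling back and pushing out along these yields a locally free crystal $E'''$ with $\Q \otimes E''' = \cE$. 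Your argument is complete once these two points are supplied.
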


\begin{proof} We start with the rank $1$ case.
The idea of the proof is simple. Locally  it uses the equivalence  \eqref{evformal} and the fact that the reflexive hull of a rank $1$ coherent sheaf on a regular scheme is locally free. 

So assume first $X$ lifts to an affine $p$-adic smooth formal scheme 
$X_W={\rm Spf}(A)$ (hence $X={\rm Spec} (A/p)$).
Let $E$ be a lattice of a rank $1$ isocrystal $\cE$. Via \eqref{evformal}, writing $M=\Gamma(X_W, E)$, $E\in \Crys(X/W)$ is given by an integrable quasi-nilpotent connection 
\begin{equation} \label{conn}
\nabla_M: M\to M\otimes_A \hat \Omega^1_{A}
\end{equation}
 where $\hat \Omega^1_{A}:=\varprojlim_n  \Gamma(X_n, \Omega^1_{X_n})$.
Then $A, \  \hat \Omega^1_A, \ M $ are sheafified to $\cO,  \ \Omega^1, \ \cM$ 
on $\Spec(A)$ by the standard procedure. 

As $M$ is $p$-torsion free and 
$A[p^{-1}]\otimes_A M$ is a projective module over $A[p^{-1}]$ of rank $1$, there is a closed codimension $\ge 2$ subscheme $C\subset \Spec (A)$ such that $\cM$ restricted to $\Spec(A) \setminus C$ is locally free of rank $1$.
Define $N=\Gamma(\Spec(A) \setminus C, \cM)$. Then, as $A$ is regular, 
$N$ is a projective $A$-module of rank $1$ and, as $\hat \Omega^1_A$  is a projective $A$-module, projection formula implies that \eqref{conn} induces 
 an integrable connection
\begin{equation} \label{conn:N}
\nabla_{N}:  N \to N \otimes_A \hat \Omega^1_{A}
\end{equation}
which is quasi-nilpotent, as this condition is local on sections of $\cM$.
The connection \eqref{conn:N} can be seen as a connection on $X_W = \Spf(A)$, and so 
it defines a locally free lattice $E$ in $\Crys(X/W)$ of $\cE$. 

Then we can glue the locally defined lattices of $\cE$ by Lemma \ref{lem:rk1lattice} below, 
by replacing the lattices $E$ by $p^nE$ for suitable $n$'s.  This finishes the proof of the rank $1$ case. 

Let $0\to \cE'\xrightarrow{a}  \cE\xrightarrow{b} \cE''\to 0$ be an exact sequence in $\Crys(X/W)_{\Q}$, with $\cE\neq 0, \cE''\neq 0$, both $\cE"$, $\cE''$ satisfying the assumptions of the proposition. Let $E$ be a lattice of $\cE$. Then $0\to a^{-1}({\rm Ker}(b)) \xrightarrow{a} E \xrightarrow{b} b(E)\to 0$ is an exact sequence $\epsilon$ in $\Crys(X/W)$, while $a^{-1}({\rm Ker}(b))$
 is a lattice of $\cE'$, and $b(E)$  is a lattice of $\cE''$. By induction on the rank of $\cE$, there are locally free lattices $E'$ of $\cE'$ and $E''$ of $\cE''$, which we can rescale by multiplication with $p$-powers such that they are injective maps $E''\xrightarrow{i} b(E)$ and $a^{-1}({\rm Ker}(b)) \xrightarrow{j} E'$. If we pull back $\epsilon$ by $i$ and push the 
resulting extension by $j$, we obtain the extension 
$0\to E'\to E''' \to E''\to 0$ of $E''$ by $E'$  in $\Crys(X/W)$ such that $\Q\otimes E'''=\cE$. Moreover, $E'''$ is locally free. This finishes the proof.
\end{proof}

\begin{lem}\label{lem:rk1lattice}
Let $X$ be a connected smooth variety over $k$ and let 
$E, E'$ be rank $1$ locally free crystals on $(X/W)_{\crys}$ endowed with 
an isomorphism 
$\varphi: \Q \otimes E \os{\cong}{\lra} \Q \otimes E'$ in 
$\Crys(X/W)_{\Q}$. 
Then, for some $n \in \Z$, $\varphi$ induces an isomorphism 
$p^n E \os{\cong}{\lra} E'$ in $\Crys(X/W)$. 
\end{lem}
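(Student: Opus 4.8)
The plan is to reduce to a local statement about rank $1$ modules with connection over a $p$-adically complete regular ring, and there exploit that $\varphi$ carries a unit-valued matrix entry after rescaling. First I would use Zariski descent: since $\Crys(X/W)$ and $\Crys(X/W)_{\Q}$ have the descent property for Zariski coverings of $X$, and since the integer $n$ (if it exists) is forced to be unique because $X$ is connected (the two lattices $p^nE$ and $p^mE$ inside $\Q\otimes E'$ agree only if $n=m$), it suffices to produce, on each member of an affine cover, the correct rescaling; the uniqueness then forces the local choices to glue. So I may assume $X$ lifts to an affine $p$-adic smooth formal scheme $X_W=\Spf(A)$ with $A$ regular and $p$-adically complete. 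Via the equivalence \eqref{evformal}, $E$ and $E'$ correspond to finitely generated projective $A$-modules $M,M'$ of rank $1$ equipped with integrable quasi-nilpotent connections, and $\varphi$ corresponds to an $A[p^{-1}]$-linear horizontal isomorphism $\Phi\colon M[p^{-1}]\os{\cong}{\lra}M'[p^{-1}]$.

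Next I would analyze $\Phi$ directly. Pick an element $m\in M$ generating $M$ locally (shrinking $\Spf(A)$ further if needed so that $M\cong A$, $M'\cong A$ as modules; the finitely many resulting rescalings are again reconciled by the connectedness/uniqueness argument, or one can argue with the determinant of $\Phi$ globally since $M$, $M'$ are invertible). Then $\Phi$ is multiplication by some $u\in A[p^{-1}]^\times$. Write $u=p^n v$ with $v\in A$ and $v\notin pA$; I claim $v\in A^\times$. Indeed $\Phi^{-1}$ is multiplication by $u^{-1}=p^{-n}v^{-1}$, and $v^{-1}\in A[p^{-1}]$; writing $v^{-1}=p^{-s}w$ with $w\in A\setminus pA$, the identity $vw=p^{s}$ in $A$ combined with $A$ being a domain and $v,w\notin pA$ forces $s=0$, hence $v\in A^\times$. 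Therefore $\Phi$ maps $M=p^nA$ onto $M'=A$, i.e. $\varphi$ induces an isomorphism $p^nE\os{\cong}{\lra}E'$ of the underlying coherent sheaves. Finally, since $\varphi$ is horizontal for the connections, so is the induced map $p^nE\to E'$ (multiplying a horizontal isomorphism by the locally constant integer power $p^n$ preserves horizontality), hence it is an isomorphism in $\Crys(X/W)$ by \eqref{evformal}.

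The only mildly delicate point is the bookkeeping across the cover: a priori the exponent $n$ producing a module isomorphism could differ on overlaps, but because $X$ is connected and $p^nE\subset\Q\otimes E'$ determines $n$, the exponent is globally constant, so the local isomorphisms $p^nE|_{U_i}\os{\cong}{\lra}E'|_{U_i}$ agree on overlaps up to a horizontal automorphism of a rank $1$ crystal, i.e. a unit; rescaling is not even needed globally once $n$ is fixed. I expect this gluing step, rather than the local computation, to be the main thing to write carefully; the local part is essentially the observation that a unit of $A[p^{-1}]$ in a $p$-torsion-free regular domain is $p^n$ times a unit of $A$.
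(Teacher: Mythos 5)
Your argument is correct in substance, but it proceeds along a genuinely different route from the paper. You localize: after refining to a cover by affine opens admitting smooth formal lifts $\Spf(A)$, you use the equivalence \eqref{evformal} to turn $\varphi$ into multiplication by a unit $u\in A[p^{-1}]^\times$ on trivialized rank $1$ modules, show $u=p^nv$ with $v\in A^\times$ by a valuation-type argument, and then glue the exponent $n$ using connectedness of $X$ (the maps themselves need no gluing, being restrictions of the single global $p^n\varphi$). The paper instead argues globally and cohomologically: $\varphi$ identifies $\Q\otimes(E^\vee\otimes E')$ with $\Q\otimes\cO_{X/W}$, so $\Hom_{\Crys(X/W)}(E,E')=H^0_{\crys}(X/W,E^\vee\otimes E')$ is a nonzero $W$-submodule of $K$ not equal to $K$, hence of the form $p^nW\cdot\varphi$, and the generator $p^n\varphi$ is then an invertible morphism $p^nE\to E'$. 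Your approach is more elementary and makes the local structure explicit at the cost of the bookkeeping over the cover, which you correctly identify as the delicate part; the paper's approach buys a two-line proof by exploiting that $H^0_{\crys}$ of the unit crystal on a connected $X$ is $W$, so all the rigidity is packaged into one $W$-module computation. One small point you should make explicit in your version: you invoke ``$A$ a domain'' in the local step, but what the argument really needs is that $pA$ is prime (so that $v,w\notin pA$ and $vw=p^s$ with $s\ge 1$ is absurd); for a smooth formal lift this holds exactly when the affine open is connected, so you should refine the cover to connected affine opens (their lifts $A$ are then $p$-torsion free with $A/pA$ a domain, hence themselves domains). With that adjustment the proof is complete.
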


\begin{proof}
By assumption, $\varphi$ induces the isomorphism 
$\Q \otimes (E^\vee\otimes E') \os{\cong}{\lra} \Q \otimes \cO_{X/W}$ in 
$\Crys(X/W)_{\Q}$. 
Thus $H^0_{\crys}(X/W, E^\vee\otimes E')={\rm Hom}_{\Crys (X/W)}(E, E')= p^{n}W$ for some 
$n \in \Z$. Hence $\varphi$ induces an invertible morphism 
$p^{n} E \lra E'$, thus an isomorphism. 
\end{proof}

We shall reprove in Theorem~\ref{thm:lf}  by a different method a weaker version of Proposition~\ref{prop:rk1lattice},  
together with  statements in the  higher rank case.
\medskip 

Finally in this section, we provide an example for which $F^*$ is not surjective on $\Crys(X/W)_{\Q}$, 
which we promised in Remark \ref{rmk:Fdivcrys}. 

\begin{prop}\label{prop:fdivcrys}
Assume $p \geq 3$ and let $X$ be a supersingular elliptic curve. 
Then the pullback functor 
$$ F^*: \Crys(X/W)_{\Q} \lra \Crys(X/W)_{\Q} $$
is not essentially surjective. 
\end{prop}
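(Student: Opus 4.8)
The plan is to exhibit an explicit isocrystal in $\Crys(X/W)_{\Q}$ which is not in the essential image of $F^*$, using the fact (Proposition~\ref{fdivision}, Remark~\ref{rmk:Fdivconv}) that $\Conv(X/K)$ is exactly $\bigcap_n \im(F^*)^n$, together with the Dieudonn\'e-theoretic description of crystalline cohomology of a supersingular elliptic curve. First I would take $\cE := H^1_{\crys}(X/W)_{\Q}$, viewed as an object of $\Crys(X/W)_{\Q}$ via the structural isocrystal of $H^1$ — more concretely, since $X$ lifts to a formal scheme $X_W$ and a Frobenius lift $F_W$ exists Zariski-locally, I would instead work with the rank-$2$ isocrystal whose Dieudonn\'e module is the covariant Dieudonn\'e module $\mathbb{D}(X[p^\infty])_{\Q}$. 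The key structural input is that, for a supersingular elliptic curve over $k = \bar k$, the Dieudonn\'e module $M = \mathbb{D}$ has slopes $\tfrac12,\tfrac12$: it is isoclinic of slope $\tfrac12$, and the Frobenius $\phi$ on $M_{\Q}$ satisfies, after a suitable basis choice, $\phi^2 = p \cdot (\text{unit})$.

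The core of the argument is then an obstruction-theoretic computation showing that such an $\cE$ cannot be an $F^*$-pullback. If $\cE = F^*\cE'$ for some $\cE' \in \Crys(X/W)_{\Q}$, then passing to the associated $\varphi$-modules over $K$ (via \eqref{evformal}, \eqref{mic-dq} and the Frobenius compatibilities recorded in the Preliminaries), the Frobenius structure on $\cE$ would factor through a $\sigma_K$-semilinear operator of the shape $p \cdot \psi$ for the Frobenius on $\cE'$, forcing the Newton slopes of $\cE$ to all be $\geq 1$. But the slopes of $\cE$ are $\tfrac12 < 1$, a contradiction. Concretely: the functor $F^*$ on $\Crys(X/W)_{\Q}$, translated through Cartier-type descent as in \eqref{eqn:can} and the level-raising isomorphisms used in Remark~\ref{rmk:Fdivconv}, multiplies the "Frobenius slope" by raising to the $p$-th power in the coordinate, which at the level of Newton polygons of the induced $F$-isocrystal structure shifts slopes up by $1$; so $\im(F^*)$ consists of objects all of whose slopes are $\geq 1$, while $\cE$ has slope $\tfrac12$.

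I expect the main obstacle to be making precise the sense in which an object of $\Crys(X/W)_{\Q}$ — which a priori carries no Frobenius structure — nonetheless has a well-defined "$F^*$-divisibility" invariant that detects non-surjectivity. The cleanest route is: the absolute Frobenius $F$ on $X$ is finite flat of degree $p^{\dim X}=p$, and on a lift it sends local coordinates $x_i \mapsto x_i^p + p(\cdots)$, so on horizontal sections $F^*$ strictly increases $p$-adic divisibility of the connection matrix in a way that is visible after a single twist; iterating, $\bigcap_n \im(F^*)^n = \Conv(X/K)$, and one shows $\cE \notin \Conv(X/K)$ because a convergent isocrystal on the (complete, smooth, but \emph{not} simply connected) curve $X$ that is a lattice-class of $H^1$ would contradict the unit-root / slope decomposition: a convergent isocrystal, being in every $\im(F^*)^n$, has all slopes in $\bigcap_n[n,\infty)\cdot(\text{rescaled}) = \{\infty\}$ unless it is already "as divisible as possible", whereas $H^1_{\crys}$ of a supersingular elliptic curve is genuinely slope $\tfrac12$. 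The hypothesis $p\geq 3$ enters to guarantee $p$-adic $\log/\exp$ convergence (as in the proof of Proposition~\ref{rem:eo}) so that the level-raising comparison of Remark~\ref{rmk:Fdivconv} applies cleanly; I would phrase the final contradiction as: $\cE \in \im(F^*)$ would give, via \eqref{evformal} and \cite[Thm.~4.1.3, Rmq.~4.1.4(v)]{berthelotII} exactly as in Remark~\ref{rmk:Fdivconv}, an action of $\Q\otimes\wh{\cD}^{(1)}_{X_W/W}$ improving the divisibility, and repeating forces $\cE\in\Conv(X/K)$, contradicting that $H^1_{\crys}(X/W)_\Q$ of a supersingular elliptic curve has Newton slope $\tfrac12$ and hence is not a convergent isocrystal.
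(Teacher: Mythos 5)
Your proposal has a genuine gap, and in fact the central mechanism cannot work. First, the candidate object: $H^1_{\crys}(X/W)_{\Q}$ (equivalently the Dieudonn\'e module $\mathbb{D}(X[p^\infty])_{\Q}$) is just a finite free $W$-module; regarded as an object of $\Crys(X/W)_{\Q}$ it can only be the \emph{constant} isocrystal $(\Q\otimes\cO_{X/W})^2$, and constant objects are visibly in the image of $F^*$ (indeed infinitely $F^*$-divisible), since $F^*\cO_{X/W}\cong\cO_{X/W}$. The slope-$\tfrac12$ Frobenius $\phi$ on the Dieudonn\'e module is \emph{extra} structure that the category $\Crys(X/W)_{\Q}$ does not see. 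Second, and more seriously, the claim that ``$\im(F^*)$ consists of objects all of whose slopes are $\geq 1$'' conflates the pullback functor $F^*$ (pullback of crystals along the Frobenius endomorphism of $X$) with a Frobenius structure on the object; objects of $\Crys(X/W)_{\Q}$ and of $\Conv(X/K)$ carry no intrinsic Newton slopes, and your argument, if it worked, would show that no convergent $F$-isocrystal of slope $\tfrac12$ exists --- which is false (the constant $F$-isocrystal attached to $\mathbb{D}(X[p^\infty])_{\Q}$ is convergent). By Proposition~\ref{fdivision} every convergent isocrystal is infinitely $F^*$-divisible regardless of any slope data, so no slope condition on the object itself can detect membership in $\im(F^*)$.

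The paper's proof locates the obstruction elsewhere: slopes enter only through the Frobenius action on the \emph{cohomology group} that classifies deformations of the trivial object, not on the object itself. One takes the explicit non-constant rank-$1$ crystal $E=(\cO_{X_W},d+p\omega)$; Lemma~\ref{lem:rk1lattice} (rigidity of rank-$1$ locally free lattices) upgrades infinite $F^*$-divisibility of $\Q\otimes E$ to infinite divisibility of $E$ itself in $\Crys(X/W)$, and one restricts to the nilpotent crystalline site. Via the $\log/\exp$ exact sequences from the proof of Proposition~\ref{rem:eo} (this is where $p\geq 3$ is used) and the fact that $p$-power torsion line bundles on a supersingular elliptic curve are trivial, the class of $E$ and of each putative division $E'_n$ lands in $H^1_{\rm Ncrys}(X/W,J)\hookrightarrow H^1_{\crys}(X/W,\cO_{X/W})$, where $\gamma(E)\neq 0$ satisfies $\gamma(E)=(F^*)^n\gamma(E'_n)$ for all $n$; since the Frobenius slopes on $H^1_{\crys}(X/W,\cO_{X/W})$ are strictly positive (supersingularity again), this forces $\gamma(E)=0$, a contradiction. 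Your intuition that positivity of slopes should obstruct divisibility is thus correct in spirit, but it must be applied to the cohomological obstruction group attached to a genuinely non-constant rank-$1$ object, not to a Newton polygon of the object one is trying to divide.
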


\begin{proof}
Let $X_W$ be a formal lift of $X$ over $\Spf W$ 
and let $\omega$ be a generator of 
$H^1(X_W, \Omega^1_{X_W/W})$. Then 
$(\cO_{X_W}, d + p \omega)$ defines a module with integrable connection on $X_W$ over $W$ and one can check that it is quasi-nilpotent. 
Hence it defines a non-constant locally free crystal of rank $1$ on 
$(X/W)_{\crys}$, which we denote by $E$. 
We prove 
\begin{claim} \label{claim:f}
$(\cO_{X_W}, d + p \omega) \in \Crys(X/W)_\Q$ is not infinitely $F^*$-divisible.

\end{claim}
\begin{proof}
By Lemma \ref{lem:rk1lattice}, when 
$E'$ is a locally free crystal of rank $1$ on 
$(X/W)_{\crys}$ such that $\Q \otimes E \cong \Q \otimes E'$, 
then $E$ and $E'$ are isomorphic crystals. 
So, if $E$ is infinitely $F^*$-divisible in $\Crys(X/W)_{\Q}$, then so in $\Crys(X/W)$. 
Hence it suffices to 
prove that $E$ is not infinitely $F^*$-divisible in $\Crys(X/W)$. 

By the restriction functor from $\Crys(X/W)$ to the category of crystals   
on $(X/W)_{\rm Ncrys}$, it is enough to show that $E$ on
$(X/W)_{\rm Ncrys}$ is not infinitely $F^*$-divisible.

Assume that $E=(F^*)^nE'_n$ 
with $E'_n \in H^1_{\rm Ncrys}(X/W,\cO_{X/W}^{\times})$ for $n \in \N$. 
Then $(E'_n)_X\in H^1(X,\cO_X^{\times})$ is a $p^n$-torsion line bundle, thus is constant as $X$ is supersingular. Thus via \eqref{eq:eo2}, $E, E'_n \in H^1_{\rm Ncrys}(X/W, J)$, and via \eqref{eq:eo1}, 
$0\neq \gamma(E)=(F^*)^n\gamma(E'_n) \in H^1_{\rm Ncrys}(X/W, \cO_{X/W}) 
=H^1_{\rm crys}(X/W, \cO_{X/W})$. 
As the slopes of $F^*$ on $H^1_{\rm crys}(X/W, \cO_{X/W})$ are strictly positive, this is impossible. 

\end{proof}
\end{proof}

\section{Proof of Theorem \ref{thm1}} \label{sec:proofthm1}

In this section, we prove Theorem \ref{thm1}, so throughout, $X$ is a smooth projective variety  of dimension $d$ over an algebraically closed field $k$ of characteristic $p>0$. 

\medskip

A coherent $\cO_X$-module $E$ has crystalline Chern classes $c_i^{\crys}(E)$ in crystalline cohomology $
H^{2i}_{\crys}(X/W)$, a module of finite type over $W$. 
In \cite[\S 1.1]{langersfund}, numerical Chern classes $c_i(E)$  are defined in  the group $Z_{d-i}(X)/\!\sim$,  where $\sim$ is the  numerical  equivalence relation on the free group on dimension $(d-i)$-points.  
Denoting by $H^i_{{\rm alg}}\subset H^{2i}_{\crys}(X/W)$ the sub-$W$-module spanned by the $c_i^{\crys}(E)$'s, one has a group homomorphism $H^i_{{\rm alg}}\to (Z_{d-i}(X)/\!\sim) \otimes_{\Z} W$. Since $Z_{d-i}(X)/\!\sim$ is a free $\Z$-module of finite rank, $c_i^{\crys}(E)=0$ implies $c_i(E)=0$. As the (reduced) Hilbert polynomial depends only on $c_i(E)$, if $c_i(E)=0$ for all $i\ge 1$, then $p_E=p_{\cO_X}$.

\begin{prop}\label{prop:chern}
Let $X$ be a smooth projective  variety over $k$, 
let $\cE$ be a convergent isocrystal on $X/K$ and 
let $E \in \Crys(X/W)$ be a lattice of $\cE$. Then 
$c_i^{\crys}(E_X)=0$ for any $i > 0$, and, if $E_X$ is torsion free, 
 $p_{E_X}=p_{\cO_X},$   $\mu(E_X) = 0$.
\end{prop}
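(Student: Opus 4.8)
The plan is to exploit the Frobenius-divisibility of convergent isocrystals (Proposition~\ref{fdivision}) together with the multiplicativity of Chern classes under Frobenius pullback. First I would fix a lattice $E$ of $\cE$ and, for each $n\in\N$, choose a lattice $E^{(n)}$ of the $F^n$-division $\cE^{(n)}$; after rescaling by a suitable power of $p$ we may arrange an isomorphism $\Q\otimes (F^n)^*E^{(n)}\cong\Q\otimes E$ in $\Crys(X/W)_\Q$, hence an isogeny between $(F^n)^*E^{(n)}_X$ and $E_X$ as coherent $\cO_X$-modules (more precisely, a morphism in $\Crys(X/W)$ which becomes an isomorphism after inverting $p$; applying the right-exact evaluation functor \eqref{evxw} it induces a map of coherent sheaves whose kernel and cokernel are $p$-power torsion). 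Since the target $H^{2i}_{\crys}(X/W)$ is a finitely generated $W$-module, killed by no nonzero power of $p$ in its torsion-free quotient, a $p$-isogeny of coherent sheaves induces an equality of crystalline Chern classes in $H^{2i}_{\crys}(X/W)\otimes_W K$, and in fact — working with $c_i^{\crys}$ valued in the $W$-module and tracking torsion carefully — one gets $c_i^{\crys}(E_X) = c_i^{\crys}((F^n)^*E^{(n)}_X)$ up to bounded $p$-torsion, which is enough once we take $K$-coefficients.

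The key computation is then that $F$ acts on $H^{2i}_{\crys}(X/W)\otimes K$ (via the crystalline Frobenius $\phi$) by multiplication that scales the $c_i$ component: by compatibility of crystalline Chern classes with pullback one has $\phi\bigl(c_i^{\crys}(E^{(n)}_X)\bigr) = c_i^{\crys}\bigl((F^n)^*E^{(n)}_X\bigr)$ after suitable identifications, while the divided Frobenius $p^{-i}\phi$ on $H^{2i}$ is the one that sends Chern classes to Chern classes; hence $c_i^{\crys}(E_X)$ lies in the image of $p^{ni}$ times an integral class in $H^{2i}_{\crys}(X/W)/(\text{torsion})$ for every $n$. Since that module is finitely generated and torsion-free, the intersection $\bigcap_n p^{ni}\cdot\bigl(H^{2i}_{\crys}(X/W)/\mathrm{tors}\bigr)$ is zero for $i>0$, forcing $c_i^{\crys}(E_X)=0$ in $H^{2i}_{\crys}(X/W)\otimes K$, i.e.\ $c_i^{\crys}(E_X)$ is torsion; and then a further argument — or simply passing to the numerical classes $c_i(E_X)\in (Z_{d-i}(X)/\!\sim)\otimes W$, which are $\Z$-free so detect only the non-torsion part — gives $c_i(E_X)=0$ for all $i>0$. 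The statements $p_{E_X}=p_{\cO_X}$ and $\mu(E_X)=0$ then follow formally from the discussion preceding the proposition, since the reduced Hilbert polynomial of a torsion-free sheaf depends only on its rank and numerical Chern classes, and $\mu(E_X)=c_1(E_X)\cdot H^{d-1}/\mathrm{rk}=0$.

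I would streamline the normalization issue as follows: rather than chasing torsion, observe that $E$ and $(F^n)^*E^{(n)}$ define the same class in $K_0$ of the category of coherent sheaves on $X$ up to $p$-torsion, so their Chern characters agree in $H^{2\bullet}_{\crys}(X/W)\otimes K$; combined with $\mathrm{ch}((F^n)^*\cF) = (F^n)^*\mathrm{ch}(\cF)$ and the eigenvalue bound on crystalline Frobenius (Frobenius on $H^{2i}$ has all slopes $\ge i$, or on the algebraic part acts as $p^i$), one immediately gets that the degree-$i$ part of $\mathrm{ch}(E_X)$ in $H^{2i}\otimes K$ is infinitely $p$-divisible, hence zero, for $i>0$. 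Then Newton's identities recover $c_i^{\crys}(E_X)\otimes K = 0$, hence $c_i(E_X)=0$ in the $\Z$-free numerical group. The main obstacle I anticipate is purely bookkeeping: making the ``isogeny of lattices'' statement precise — controlling how the rescaling power of $p$ depends on $n$, and confirming that the right-exact (not exact) evaluation functor $\Crys(X/W)\to\Coh(X)$ still yields a $p$-isogeny on values — and correctly invoking functoriality of $c_i^{\crys}$ under the non-flat morphism $F$ together with the crystalline Frobenius structure; none of this is deep, but it is where the argument must be written carefully.
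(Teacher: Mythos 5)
Your overall strategy is the same as the paper's: compare the given lattice $E$ with $(F^*)^nE^{(n)}$ for a lattice $E^{(n)}$ of the $F^n$-division, use $c_i^{\crys}((F^*)^nE^{(n)}_X)=p^{ni}c_i^{\crys}(E^{(n)}_X)$, and conclude by $p^{ni}$-divisibility in the finitely generated $W$-module $H^{2i}_{\crys}(X/W)$, then pass to numerical classes. However, the step you dismiss as ``purely bookkeeping'' is the mathematical heart of the proposition, and your sketch of it does not work as stated. After evaluating at $X$, the phrase ``kernel and cokernel are $p$-power torsion'' carries no content: every coherent $\cO_X$-module is killed by $p$, and for an inclusion of lattices $E'\subseteq E$ with $p^NE\subseteq E'$ the kernel and cokernel of $E'_X\to E_X$ are genuine coherent sheaves on $X$ (possibly of large rank), so there is no quotable ``$p$-isogeny implies equality of rational Chern classes'' lemma to invoke. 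What is actually true, and what the paper proves, is the exact equality $[E'_X]=[E_X]$ in $K_0(X)$: one reduces to the case $pE\subseteq E'\subseteq E$ by interpolating lattices, applies the snake lemma to multiplication by $p$ to get the four-term exact sequence $0\to Q\to E'/pE'\to E/pE\to Q\to 0$ with the \emph{same} $Q=\Coker(E'\to E)$ at both ends, notes that these are $p$-torsion crystals, hence objects of $\Crys(X/k)$ where evaluation at $X$ is exact, and so obtains $0\to Q_X\to E'_X\to E_X\to Q_X\to 0$ in $\Coh(X)$. Without this (or an equivalent) argument, your comparison ``up to bounded $p$-torsion'' is unsupported.

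A second, smaller point: even granting the comparison up to torsion, your route only shows that $c_i^{\crys}(E_X)$ is torsion in $H^{2i}_{\crys}(X/W)$ (i.e.\ vanishes in the torsion-free quotient), which is weaker than the assertion $c_i^{\crys}(E_X)=0$ in the proposition. Once the $K_0$-equality above is in place this is repaired for free: the identity $c_i^{\crys}(E_X)=p^{ni}c_i^{\crys}(E^{(n)}_X)$ then holds integrally, and $\bigcap_n p^{ni}H^{2i}_{\crys}(X/W)=0$ already for the full finitely generated $W$-module, torsion included, so no passage to the torsion-free quotient, to $K$-coefficients, or to the crystalline Frobenius structure and its slopes is needed; that detour is superfluous. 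Your final deductions of $c_i(E_X)=0$, $p_{E_X}=p_{\cO_X}$ and $\mu(E_X)=0$ from the vanishing of the crystalline classes are fine and agree with the paper.
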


\begin{proof}
For $n \in \N$, let $\cE^{(n)}$ be the $F^n$-division of 
$\cE$ and let $E^{(n)} \in \Crys(X/W)$ be a lattice of $\cE^{(n)}$. 
As $X$ is smooth, there exists locally a lift $X_W$ of $X$ to a smooth $p$-adic formal scheme over $W$ and 
a local lift of $F$ on $X_W$, which is faithfully flat. 
Thus the equivalence \eqref{evformal} implies that 
the $p$-torsion freeness of a crystal is preserved by $F^*$ and so 
$(F^*)^nE^{(n)}$ is a lattice of $\cE$. In addition 
$((F^*)^nE^{(n)})_X = (F^*)^nE^{(n)}_X$. Hence, if we  prove 
$c^{\crys}_i(E_X) = c^{\crys}_i((F^*)^nE^{(n)}_X)$ in this situation, 
we have $c_i^{\crys}(E_X) = c_i^{\crys}((F^*)^nE^{(n)}_X) = p^{ni} c_i^{\crys}(E^{(n)}_X)$ for all $n\ge 1$, thus $c_i^{\crys}(E_X) = 0$ in the finite type $W$-module 
$H^{2i}_{\crys}(X/W)$, as claimed. Therefore, 
it suffices to prove that $c^{\crys}_i(E_X)$ does not depend on the 
choice of the lattice $E$. \par 
So let $E'$  be another lattice of $\cE$. Then, 
replacing $E'$ by $p^aE'$ for some $a\in \N$, one has 
$p^nE \subseteq E' \subseteq E$ for some $n \in \N$, where 
$p^nE$ is the image of $p^n: E \lra E$, and it suffices to treat this case. 
For $0 \leq i \leq n$, let ${E'}^i$ be the image of the map 
$E' \oplus p^iE \lra E$ defined as the sum of inclusions. 
Then we have ${E'}^0 = E, {E'}^n = E'$ and 
$p{E'}^{i-1} \subseteq {E'}^i \subseteq {E'}^{i-1} \, (1 \leq i \leq n)$. 
So to prove the equality $c^{\crys}_i(E) = c^{\crys}_i(E')$, we may assume that 
$pE \subseteq E' \subseteq E$. If we denote 
$Q := \Coker (E' \ra E)$, we have the following commutative diagram with 
exact horizontal lines in $\Crys(X/W)$: 
\begin{equation*}
\begin{CD}
0 @>>> E' @>>> E @>>> Q @>>> 0 \\ 
@. @VpVV @VpVV @V0VV \\ 
0 @>>> E' @>>> E @>>> Q @>>> 0. 
\end{CD}
\end{equation*}
By the snake lemma, we obtain the exact sequence 
$$ 0 \lra Q \lra E'/pE' \lra E/pE \lra Q \lra 0 $$ 
in $\Crys(X/W)$. Since all the objects in the above sequence are 
$p$-torsion, we can regard it as an exact sequence in $\Crys(X/k)$. 
By evaluating this sequence at $X$ and noting the equalities 
$(E/pE)_X = E_X/pE_X = E_X, (E'/pE')_X = E'_X/pE'_X = E'_X$, 
we obtain the exact sequence 
$$ 0 \lra Q_X \lra E'_X \lra E_X \lra Q_X \lra 0 $$
of coherent $\cO_X$-modules. Hence 
$[E_X] = [E'_X]$ in $K_0(X)$ and so $c^{\crys}_i(E_X) = c^{\crys}_i(E'_X)$ for all 
$i \in \N$. 
\end{proof}

The following proposition, which uses Gieseker's conjecture, 
proven in \cite{esnaultmehta}, 
is the key step for the proof.

\begin{prop}\label{prop:a} 
Let $X$ be a connected smooth projective variety over $k$ 
with trivial \'etale fundamental group. Let $r$ be a positive integer. 
Then there exists a positive integer $a = a(X,r)$ 
satisfying the following condition$:$ 
For any sequence of $\chi$-stable sheaves $\{E_i\}_{i=0}^a$ on 
$X$ with ${\rm rank}\,E_0 \leq r$, 
$p_{E_i} = p_{\cO_X} \,(0 \leq i \leq a)$ and 
$F^*E_i = E_{i+1} \,(0 \leq i \leq a-1)$, 
$E_a$ is necessarily of rank $1$ and isomorphic to $\cO_{X}$. 
\end{prop}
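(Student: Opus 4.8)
The plan is to extract a bounded amount of moduli-theoretic information from the sequence $\{E_i\}$ and then invoke the finiteness of the relevant moduli space together with Gieseker's conjecture. First I would fix the Hilbert polynomial: since $p_{E_i}=p_{\cO_X}$ for all $i$ and $\rk E_0\le r$, the rank is multiplied by $p^d$ (where $d=\dim X$) at each Frobenius pullback only if $d>0$... more precisely, $F^*$ preserves the reduced Hilbert polynomial $p_{E_i}$ but multiplies the rank by $p^{\dim X}$ when $\dim X \geq 1$; hence if $\dim X \ge 1$ the ranks $\rk E_i = p^{i\dim X}\rk E_0$ grow without bound, which is impossible for a sequence of fixed finite length only if we are careful — so instead the point is that each $E_i$ is $\chi$-stable with fixed reduced Hilbert polynomial $p_{\cO_X}$ and rank at most $p^{a\dim X}r$, hence lies in one of finitely many moduli spaces $M_{X,p_{\cO_X},r'}$ of $\chi$-stable sheaves with $r'\le p^{a\dim X}r$. (In the case $\dim X = 0$ the statement is trivial since $X$ is a point; assume $\dim X\ge 1$.)

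Next I would use that each $M:=M_{X,p_{\cO_X},r'}$ is a quasi-projective scheme of finite type over $k$, and that $F^*$ induces a morphism (or at least a set-theoretic map on $k$-points, which suffices) from $M_{X,p_{\cO_X},r'}$ to $M_{X,p_{\cO_X},p^{\dim X}r'}$. The family $\{[E_i]\}_{i\ge 0}$, continued to an infinite Frobenius-divisible tower using $F^*E_i=E_{i+1}$, together with the fact that only finitely many ranks $\le$ some bound can occur if the sequence stabilizes — here I would instead argue by a noetherianity/pigeonhole argument over the finitely many moduli spaces indexed by rank $\le r$: a sheaf $E_0$ of rank $\le r$ that admits a length-$a$ chain of Frobenius pullbacks all of which are $\chi$-stable with $p=p_{\cO_X}$ determines a point in $M_{X,p_{\cO_X},\rk E_0}$; I claim the set of such $E_0$ (as $a\to\infty$) shrinks, and the key input is that an $E_0$ that is $\chi$-stable with $p_{E_0}=p_{\cO_X}$ and is \emph{infinitely} $F^*$-divisible through $\chi$-stable sheaves has a value which is an $\cO_X$-coherent $\cD_X$-module (by Cartier descent / Katz's theorem assembling the $F^*$-descent data), hence is constant by \cite{esnaultmehta}, hence $\cO_X$ since it has rank determined by $p_{\cO_X}$, forcing $\rk E_0=1$.

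The honest structure of the argument I would write is: consider the subsets $T_a\subseteq \coprod_{r'\le r} M_{X,p_{\cO_X},r'}(k)$ of (isomorphism classes of) $\chi$-stable $E_0$ which occur as the start of such a length-$a$ chain. These form a decreasing sequence $T_0\supseteq T_1\supseteq\cdots$ of constructible subsets of a noetherian scheme, so by descending chain condition on constructible subsets (using that the ambient moduli space is of finite type) the chain stabilizes: $T_a=T_{a+1}=\cdots=:T_\infty$ for some $a=a(X,r)$. Any $E_0\in T_\infty$ then extends to an infinite chain $E_0,E_1,\dots$ with $F^*E_i=E_{i+1}$, all $\chi$-stable with $p=p_{\cO_X}$; the inverse system of such sheaves, via Cartier descent, assembles the $E_i$ into an object of $\MIC_0(X)$ that is stable, equivalently a stable $\cO_X$-coherent $\cD_X$-module, which by Gieseker's conjecture (\cite[Thm.~1.1]{esnaultmehta}) is constant, and being $\chi$-stable with $p_{E_0}=p_{\cO_X}$ it must be $\cO_X$ itself. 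Thus $\rk E_0=1$ and $E_0\cong\cO_X$; since $F^*\cO_X\cong\cO_X$, the same holds for every $E_i$ with $i\ge a$, in particular $E_a\cong\cO_X$.

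\textbf{Main obstacle.} The delicate point is making precise the passage from a length-$a$ chain to a genuine object of $\MIC_0(X)$ (equivalently a $\cD_X$-module): one needs that an infinitely $F^*$-divisible $\chi$-stable sheaf really carries a canonical stratification, i.e. the chain of isomorphisms $F^*E_i\cong E_{i+1}$ can be chosen compatibly — uniqueness coming from $\chi$-stability (Hom between non-isomorphic stable sheaves of the same reduced Hilbert polynomial vanishes, and scalars are the only endomorphisms), which pins down the gluing up to a coherent scalar ambiguity that can be normalized. Equally, one must check $F^*$ induces an actual \emph{morphism} of moduli schemes so that the $T_a$ are constructible; this is standard (relative Frobenius is a morphism and stability is open in families, with boundedness from fixed $p$ and bounded rank), but it is where the real work lies, together with confirming that $\chi$-stability is preserved in the limit so that the assembled $\cD_X$-module is again stable and Gieseker applies.
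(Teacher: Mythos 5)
Your proposal heads in the right general direction (Langer's moduli of $\chi$-stable sheaves, Frobenius, and the Gieseker conjecture via \cite{esnaultmehta}), but two central steps are genuinely broken. First, the Frobenius divisibility runs the wrong way. In a chain with $F^*E_i=E_{i+1}$ it is the \emph{last} term $E_a=(F^*)^aE_0$ that is Frobenius-divisible; the first term $E_0$ is merely a sheaf all of whose Frobenius pullbacks remain $\chi$-stable, and such a sheaf carries no stratification: an $\cO_X$-coherent $\cD_X$-module is a tower with isomorphisms $E_i\cong F^*E_{i+1}$, which is exactly the opposite indexing. So your key step, assembling the forward chain into an object of $\MIC_0(X)$ and applying \cite[Thm.~1.1]{esnaultmehta} to conclude $E_0\cong\cO_X$, does not go through; in fact its conclusion is false. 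A non-trivial $p$-torsion line bundle $L$ (numerically trivial, so $p_L=p_{\cO_X}$, and $\chi$-stable) satisfies $(F^*)^nL\cong\cO_X$ for $n\geq 1$, hence lies in your $T_a$ for every $a$ while $L\not\cong\cO_X$; such $L$ can exist on simply connected varieties (e.g.\ $\omega_X$ on a classical Enriques surface in characteristic $2$). The proposition only asserts triviality of the far end $E_a$, and a correct proof must track the images of the Verschiebung $V$ (the loci of sheaves that \emph{are} $n$-fold pullbacks of stable sheaves), not your loci $T_a$ of admissible starting points. A small but related slip: $F^*$ preserves the rank of a coherent sheaf (it is $F_*$ that multiplies it by $p^{\dim X}$), which is why all $E_i$ live in the moduli spaces $M_s$ with $s=\rk E_0\le r$.

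Second, the stabilization step is unjustified: there is no descending chain condition for constructible (or open) subsets of a noetherian scheme --- the complements of larger and larger finite subsets of a curve form a strictly decreasing chain --- so ``$T_a=T_{a+1}=\cdots$ for some $a$'' does not follow from noetherianity, and the same issue arises for the decreasing images $\im V^n\subseteq M_s$. This is precisely the difficulty the paper's proof addresses: after reducing to uncountable $k$ by base change, one observes that $\dim\im V^n$ stabilizes; if the stable dimension were positive, one finds an irreducible closed $C$ meeting every $\im V^n$ in a dense open, and uncountability of $k$ yields \emph{two} distinct $k$-points of $C$ lying in $\bigcap_n\im V^n$; but points of this intersection are moduli points of infinitely $F$-divisible (hence locally free) sheaves, so by \cite[Thm.~1.1]{esnaultmehta} the intersection is empty or the single point $[\cO_X]$, a contradiction. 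Only then is $\im V^n$ finite for some $n$, so the decreasing chain of finite sets stabilizes, $\im V^{a(s)}=\bigcap_n\im V^n$, and since $[E_a]\in\im V^a$ one concludes $E_a\cong\cO_X$ (and that no such chain exists when $s\ge 2$). Some argument of this kind, applied to the images rather than to the starting loci, is indispensable and is missing from your proposal.
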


\begin{proof} 
By standard base change argument, we may assume that $k$ is uncountable. 
For $1 \leq s \leq r$, let $M_s$ be the moduli 
of $\chi$-stable sheaves on $X$ with rank $s$ and reduced 
Hilbert polynomial 
$p_{\cO_X}$, which is constructed by Langer (\cite{langerpositive}, 
\cite{langermixed}). 
It is a quasi-projective scheme over $k$. Also, let $M_{s,n}$ be 
the locus of closed  points consisting of $\chi$-stable sheaves 
$G$ such that $(F^*)^nG$ remains 
$\chi$-stable. It is known to be an open subvariety of  $M_s$ endowed with the reduced structure. (See discussion in 
the beginning of \cite[\S 3]{esnaultmehta}.) 
The pullback by $F$ induces the morphism $V$ (over $\sigma$) called Verschiebung
$$ \cdots \lra M_{s,2} \os{V}{\lra} M_{s,1} \os{V}{\lra} M_s. $$
Let $\im V^n$ be the image of $V^n: M_{s,n} \lra M_s$, 
which is a constructible set of the topological space $M_s$. Then, $\dim \im V^n$ 
is stable for $n \gg 0$, which we denote by $f$. 
Assume $f > 0$. Then the generic point of 
some irreducible closed subscheme of dimension $f$ remains contained in 
$\im V^n \, (n \in \N)$. 
Pick such an irreducible closed subscheme and denote it by $C$. 
Then $C \cap \im V^n$ is non-empty for any $n \in \N$ and it contains 
an open subscheme of $C$. So there exists a closed 
subscheme $D_n \subsetneq C$ of smaller dimension 
such that $C \cap \im V^n \supseteq C \setminus D_n$. Then 
$C \cap (\bigcap_n \im V^n) \supseteq C \setminus (\bigcup_n D_n)$, and 
$C \setminus (\bigcup_n D_n)$
contains at least two $k$-rational points $P,P'$, 
because $k$ is uncountable. 
On the other hand,  the $k$-rational points of $\bigcap_n \im V^n$  are moduli points of
 infinitely $F$-divisible torsion free sheaves, thus they are locally free  infinitely $F$-divisible sheaves.  By the affirmation \cite[Thm.~1.1]{esnaultmehta} of Gieseker's conjecture, $\bigcap_n \im V^n$ is either empty or consists only of the moduli point of $\cO_X$. 
Since $P,P'$ are different $k$-rational points of $\bigcap_n \im V^n$, 
this is a contradiction. So $\im V^n$ consists of  a
finite set of points (possibly empty) for some $n$. 
Then, since $\bigcap_n \im V^n$ is empty (if $s \geq 2$) or  consists of 
one point corresponding to $\cO_{X}$ 
(if $s=1$),  
it is equal to $\im V^{a(s)}$ for some $a(s) \in \N$. 
Let us define $a$ to be the maximum of natural numbers $a(s)$ $(s \leq r)$. 
Then, if we are given a 
sequence $\{E_i\}_{i=0}^a$ 
as in the statement of the proposition 
with $s := {\rm rank} E_0 \leq r$, 
$E_a$ defines a $k$-rational point of $\im V^{a(s)} \subseteq M_s$. 
Then $s$ should be equal to $1$ and 
$E_a$ should be isomorphic to 
$\cO_{X}$. 
\end{proof}

We also use the following proposition. 

\begin{prop}\label{prop:b}
Let $X$ be a connected projective smooth variety over $k$ 
with  trivial \'etale fundamental group. 
Then 
there exists a positive integer 
$b = b(X)$ satisfying the following condition$:$ 
For any sequence of locally free sheaves $\{E_i\}_{i=0}^{b(r-1)}$ 
on $X$ with ${\rm rank}\,E_0 = r$, 
$F^*E_i = E_{i+1} \,(0 \leq i \leq b(r-1)-1)$ such that 
$E_{0}$ is an iterated extension of $\cO_{X}$, 
$E_{b(r-1)}$ is isomorphic to $\cO_{X}^r$. 
\end{prop}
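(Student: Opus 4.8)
The plan is to induct on $r$, the crucial input being that triviality of $\pi_1^{\et}(X)$ forces the absolute Frobenius to act nilpotently on $H^1(X,\cO_X)$.

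First I would record this nilpotence. From the Artin--Schreier sequence $0 \to \F_p \to \cO_X \xrightarrow{\wp} \cO_X \to 0$ of étale sheaves, where $\wp(f) = f^p - f$, together with $H^i_{\et}(X,\cO_X) = H^i(X,\cO_X)$, one obtains the exact sequence
\begin{equation*}
H^0(X,\cO_X) \xrightarrow{\wp} H^0(X,\cO_X) \lra H^1_{\et}(X,\F_p) \lra H^1(X,\cO_X) \xrightarrow{\wp} H^1(X,\cO_X).
\end{equation*}
Since $X$ is connected and proper over the algebraically closed field $k$, one has $H^0(X,\cO_X) = k$, on which $\wp$ is surjective, and $H^1_{\et}(X,\F_p) = \Hom(\pi_1^{\et}(X),\F_p) = 0$ by hypothesis; hence $\wp = F - \id$ is injective on the finite-dimensional $k$-vector space $V := H^1(X,\cO_X)$, where $F$ is the $\sigma$-semilinear operator induced by the $p$-th power. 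Then $F$ is nilpotent on $V$: otherwise $W := \bigcap_{n \geq 0} F^n(V)$ would be a nonzero $F$-stable subspace on which $F$ is surjective, hence bijective, and a bijective $\sigma$-semilinear operator on a nonzero finite-dimensional vector space over the separably closed field $k$ admits a nonzero fixed vector (semilinear descent), contradicting injectivity of $F - \id$. So I may fix an integer $b = b(X) \geq 1$ with $F^b = 0$ on $H^1(X,\cO_X)$, hence also, acting diagonally, on $H^1(X,\cO_X^{\oplus m})$ for all $m$.

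Next I would run the induction on $r$. For $r = 1$ there is nothing to prove, as $E_0 \cong \cO_X$ and $b(r-1) = 0$. For $r \geq 2$, choose a filtration $0 = G_0 \subset G_1 \subset \dots \subset G_r = E_0$ with $G_j/G_{j-1} \cong \cO_X$; then each $G_j$ is a subbundle and $G_{r-1}$ is an iterated extension of $\cO_X$ of rank $r-1$. Since the absolute Frobenius is flat, $(F^*)^i$ is exact on locally free sheaves, so $(F^*)^i G_{r-1}$ is a subbundle of $E_i = (F^*)^i E_0$ with quotient $(F^*)^i(E_0/G_{r-1}) = \cO_X$. Applying the inductive hypothesis to $G_{r-1}$ and putting $N := b(r-2)$, we get $(F^*)^N G_{r-1} \cong \cO_X^{\oplus(r-1)}$, whence a short exact sequence $0 \to \cO_X^{\oplus(r-1)} \to E_N \to \cO_X \to 0$ whose class lies in $\Ext^1_{\cO_X}(\cO_X,\cO_X^{\oplus(r-1)}) = H^1(X,\cO_X^{\oplus(r-1)})$. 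Pulling back by $F$ applies the semilinear $F$ to this class, so after $b$ further pullbacks it vanishes; thus $(F^*)^{N+b} E_0 \cong \cO_X^{\oplus(r-1)} \oplus \cO_X = \cO_X^{\oplus r}$. Since $N + b = b(r-2) + b = b(r-1)$ and $F^*\cO_X^{\oplus r} = \cO_X^{\oplus r}$, this gives $E_{b(r-1)} \cong \cO_X^{\oplus r}$.

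The main obstacle is the first step: one must know that $H^1_{\et}(X,\F_p) = 0$ --- guaranteed by triviality of $\pi_1^{\et}(X)$ --- kills the ``unit-root'' part of the semilinear Frobenius on $H^1(X,\cO_X)$, equivalently that this Frobenius is nilpotent. After that, exactness of $F^*$ on vector bundles, functoriality of $\Ext^1$ along $F^*$, and the bookkeeping $b(r-2) + b = b(r-1)$ are routine. One minor point of hygiene: one should take $b \geq 1$ so that the statement reads correctly also when $H^1(X,\cO_X) = 0$, in which case every iterated extension of $\cO_X$ of rank $r$ is already isomorphic to $\cO_X^{\oplus r}$.
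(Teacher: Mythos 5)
Your proof is correct and follows essentially the same route as the paper: triviality of $\pi_1^{\et}(X)$ kills the semisimple part of the $\sigma$-semilinear Frobenius on $H^1(X,\cO_X)$, giving $b$ with $(F^*)^b=0$ there, and then the extension classes along the filtration by copies of $\cO_X$ are killed $b$ Frobenius pullbacks at a time, with the same bookkeeping $b(r-1)$. The only cosmetic differences are that you derive the nilpotence via Artin--Schreier plus semilinear descent on $\bigcap_n F^n(V)$ instead of citing Mumford's nilpotent/semisimple decomposition (the paper's identification $H^1(X,\cO_X)^{F=1}\otimes_{\F_p}k = H^1_{\et}(X,\F_p)\otimes_{\F_p}k$ is the same Artin--Schreier content), and that you organize the induction on the rank $r$ rather than on the filtration index.
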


\begin{proof}
The proof is similar to that in \cite[Prop.~2.4]{esnaultmehta}. 
By \cite[Cor.~on~p.143]{mumford}, one has the decomposition 
$$H^1(X,\cO_{X}) = H^1(X,\cO_{X})_{\rm nilp} \oplus 
H^1(X,\cO_{X})_{\rm ss}$$ of $H^1(X,\cO_{X})$ as $k$-vector spaces 
such that the absolute Frobenius $F^*$ acts nilpotently on 
$H^1(X,\cO_{X})_{\rm nilp}$ 
  and as a bijection on 
$H^1(X,\cO_{X})_{\rm ss}$. 
Moreover, one has 
\begin{align} 
H^1(X,\cO_{X})_{\rm ss} & = 
H^1(X,\cO_X)^{F=1} \otimes_{\F_p} k \label{eqn:O} \\ & = 
H^1_{\et}(X,\F_p) \otimes_{\F_p} k = 
\Hom(\pi_1^{\et}(X),\F_p) \otimes_{\F_p} k = 0, \nonumber 
\end{align}
and there exists some $b \in \N$ 
such that $(F^*)^b$ acts by $0$ on 
$H^1(X,\cO_{X})_{\rm nilp}$, 
since $H^1(X,\cO_{X})_{\rm nilp}$ is finite-dimensional. 
So $(F^*)^b$ acts by $0$ on $H^1(X,\cO_{X})$. 
We prove the proposition for this choice of $b$. 

By assumption on $E_{0}$, there exists a filtration 
$$ 0 = E_{0,0} \subset E_{0,1} \subset  
\cdots \subset E_{0,r} = E_{0} $$
the graded quotients of which are isomorphic to $\cO_{X}$. 
By pulling back to $E_{i}$ via $(F^*)^{i}$, we obtain the 
filtration 
$$ 0 = E_{i,0} \subset E_{i,1} \subset  \cdots \subset E_{i,r} = E_{i} $$
of $E_i$ still with graded quotients   isomorphic to $\cO_{X}$. 
We prove that $E_{b(\ell-1),\ell}$ is isomorphic to 
$\cO_{X}^\ell$ by induction on $\ell$. 
Assume that $E_{b(\ell-1),\ell} \cong \cO_{X}^\ell$. 
Then, for $b(l-1) \leq n \leq bl$, 
consider the extension class $e_{n}$ of the exact sequence 
$$ 0 \lra E_{n,\ell} \lra E_{n,\ell+1} \lra \cO_{X} \lra 0 $$
in $H^1(X,E_{n,\ell}) = H^1(X,\cO_{X})^\ell$. 
The family of classes $\{e_n\}_{n=b(\ell-1)}^{b\ell}$ defines an element of 
the inverse limit of the diagram 
$$ H^1(X,\cO_{X})^\ell \os{F^*}{\lra} 
\cdots 
\os{F^*}{\lra} H^1(X,\cO_{X})^\ell $$
of length $b$ with last component  $e_{b\ell}$. 
Then, by definition of $b$, $e_{b\ell} = 0$. 
So $E_{b\ell,\ell+1}$ is isomorphic to $\cO_X^{\ell+1}$. This finishes the proof. 
\end{proof}

We use Propositions~\ref{prop:a} and \ref{prop:b} to proceed towards the proof of Theorem~\ref{thm1}. 
\begin{prop}\label{prop:c}
Let $X$ be a connected smooth projective variety over $k$ 
with trivial \'etale fundamental group. Let $r$ be a positive integer. 
Let $\cE \in \Conv(X/K)$ be  of rank $r$ 
and let $E$ be a lattice of $\cE$ such that 
$E_X \in \Coh(X)$  is strongly $\mu$-semistable. 
Then, there exists a positive integer $c = c(X,r)$ 
such that  $((F^*)^cE)_X \in \Crys(X/k)$
 is constant.
\end{prop}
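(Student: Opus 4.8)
The plan is to combine the vanishing of Chern classes (Proposition~\ref{prop:chern}) with the two combinatorial inputs Propositions~\ref{prop:a} and~\ref{prop:b}, after peeling $E_X$ apart into $\chi$-stable pieces. First I would note that the restriction of $(F^*)^cE$ to $\Crys(X/k)=\MIC(X)^{\qn}$ is $(F^*)^c(E_X)$ equipped with the canonical connection of zero $p$-curvature (Cartier descent, cf.\ \eqref{eqn:can}); since the constant crystal $\cO_{X/k}$ is $(\cO_X,d)$, of the same form, it suffices to produce $c_0=c_0(X,r)$ with $(F^*)^{c_0}(E_X)\cong\cO_X^{\oplus r}$ in $\Coh(X)$ and then set $c=c_0+1$. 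Proposition~\ref{prop:chern} gives $c_i^{\crys}(E_X)=0$ for all $i>0$, hence $p_{E_X}=p_{\cO_X}$ and $\mu(E_X)=0$; moreover a strongly $\mu$-semistable torsion-free sheaf with vanishing numerical Chern classes is locally free (by Langer's Bogomolov inequality, via restriction to complete-intersection surfaces when $\dim X\ge 3$), so $E_X$ is a vector bundle.

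Next I would introduce the class $\mathcal S_r$ of strongly $\mu$-semistable locally free sheaves of rank $\le r$ on $X$ with vanishing numerical Chern classes, and show that $\mathcal S_r$ is closed under Frobenius pullback, finite direct sums, extensions, and, crucially, under passing to a saturated subsheaf $V'$ of the same slope $0$ and the corresponding quotient $V/V'$. For the last point, additivity of $\mathrm{ch}$ yields the discriminant identity
\[
\Delta(V)=\tfrac{r}{\mathrm{rk}\,V'}\Delta(V')+\tfrac{r}{\mathrm{rk}(V/V')}\Delta(V/V')-\mathrm{rk}(V')\cdot\mathrm{rk}(V/V')\,\Big(\tfrac{c_1(V')}{\mathrm{rk}\,V'}-\tfrac{c_1(V/V')}{\mathrm{rk}(V/V')}\Big)^2
\]
with $r=\mathrm{rk}\,V$; intersecting with $H^{d-2}$ and combining the Bogomolov inequality for the strongly $\mu$-semistable $V'$ and $V/V'$ with the Hodge index theorem forces $c_1(V')\equiv 0$ and $\Delta(V')\cdot H^{d-2}=0$, which on a surface gives $c_i(V')=0$ and in higher dimension gives $p_{V'}=p_{\cO_X}$ after restricting to general complete-intersection surfaces; thus $V',V/V'\in\mathcal S_r$. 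In particular, the $\mu$-stable members of $\mathcal S_r$ are $\chi$-stable with reduced Hilbert polynomial $p_{\cO_X}$, so they are closed points of the moduli spaces $M_s$ ($s\le r$) used in Proposition~\ref{prop:a}.

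Then I would argue by induction on rank. Let $a=a(X,r)$ and $b=b(X)$ be as in Propositions~\ref{prop:a} and~\ref{prop:b}; I claim that for a suitable $c'(X,r)$ one has $(F^*)^{c'(X,r)}V\cong\cO_X^{\oplus\mathrm{rk}\,V}$ for every $V\in\mathcal S_r$, proved by induction on $s=\mathrm{rk}\,V$. If $V,F^*V,\dots,(F^*)^{a}V$ are all $\mu$-stable, they form a sequence of $\chi$-stable sheaves with reduced Hilbert polynomial $p_{\cO_X}$ in which each is the Frobenius pullback of the previous one, so Proposition~\ref{prop:a} forces $s=1$ and $(F^*)^{a}V\cong\cO_X$. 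Otherwise some $W:=(F^*)^{i_1}V$ with $i_1\le a$ fails to be $\mu$-stable; using the $\mu$-socle, $W$ is then either a direct sum of $\mu$-stable members of $\mathcal S_r$ of rank $<s$, or fits into a short exact sequence $0\to S\to W\to W/S\to 0$ with $S,W/S\in\mathcal S_r$ of rank $<s$. By the inductive hypothesis, $(F^*)^{c'(X,s-1)}$ applied to each smaller-rank piece is trivial, so $(F^*)^{c'(X,s-1)}W$ is an iterated extension of copies of $\cO_X$, and Proposition~\ref{prop:b} yields $(F^*)^{c'(X,s-1)+b(s-1)}W\cong\cO_X^{\oplus s}$. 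One may therefore take $c'(X,s)=a+c'(X,s-1)+b(s-1)$ with $c'(X,1)=a$; since the recursion has depth $\le r$, $c'(X,r)$ is finite and depends only on $X$ and $r$. Applying the claim to $V=E_X\in\mathcal S_r$ then completes the proof.

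The delicate step is the closure of $\mathcal S_r$ under subquotients, i.e.\ showing that the Jordan--Hölder factors of a strongly $\mu$-semistable sheaf with vanishing numerical Chern classes again have reduced Hilbert polynomial $p_{\cO_X}$: on surfaces this is immediate from the Bogomolov inequality and the Hodge index theorem, but in dimension $\ge 3$ one must pass to sufficiently positive complete-intersection surfaces (Mehta--Ramanathan, Langer) and relate the Hilbert polynomial of $V$ to that of its restriction. A secondary point is to keep the total Frobenius twist $c$ bounded uniformly in $X$ and $r$, which is arranged by bounding the depth of the recursion in the last step by $r$.
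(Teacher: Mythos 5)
Your overall architecture is close to the paper's: reduce to showing $(F^*)^{c-1}E_X\cong\cO_X^{\oplus r}$ in $\Coh(X)$ and add one Frobenius pullback via \eqref{eqn:can}, feed $\mu$-stable pieces with reduced Hilbert polynomial $p_{\cO_X}$ into Proposition~\ref{prop:a}, and finish iterated extensions of $\cO_X$ with Proposition~\ref{prop:b}. The difference is how the pieces are produced, and that is where your argument has a genuine gap. You need the class $\mathcal S_r$ to be closed under passing to a saturated subsheaf $V'\subset V$ of slope $0$ and its quotient, with \emph{all} numerical Chern classes vanishing (so that $p_{V'}=p_{\cO_X}$, which is what Proposition~\ref{prop:a} and membership in the moduli $M_s$ require) and with the pieces \emph{locally free} (which your definition of $\mathcal S_r$ demands, and which your induction needs: if $W/S$ is only torsion free, no Frobenius pullback of it can be $\cO_X^{\oplus\mathrm{rk}}$ since $F$ is faithfully flat, so the inductive hypothesis cannot be applied to it and the input to Proposition~\ref{prop:b} is no longer an iterated extension of $\cO_X$). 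Your discriminant/Bogomolov/Hodge-index sketch only controls $c_1(V')$ and $\Delta(V')\cdot H^{d-2}$, i.e.\ essentially $c_2\cdot H^{d-2}$; in dimension $\geq 3$ this says nothing about $c_i\cdot H^{d-i}$ for $i\geq 3$ (hence not $p_{V'}=p_{\cO_X}$), and it does not yield local freeness of $V'$ or $V/V'$ at all. In effect you are re-deriving Langer's theorem \cite[Thm.~4.1]{langersfund}, which is a substantial result, from a two-line computation; the step you yourself flag as ``delicate'' is not established.

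The paper avoids this entirely by quoting that theorem (Theorem~\ref{thm:langer}): every strongly $\mu$-semistable sheaf with vanishing Chern classes has a filtration whose graded quotients are $\mu$-stable, strongly $\mu$-semistable, locally free, with vanishing Chern classes. It then runs a pigeonhole argument rather than your induction on rank: for $0\leq n\leq c-1$ it refines the Frobenius pullback of the previous filtration using Theorem~\ref{thm:langer}, observes that the lengths $q_n$ are non-decreasing and bounded by $r$, extracts a stretch of length $b(r-1)+a$ on which the filtration is simply pulled back, applies Proposition~\ref{prop:a} to the graded pieces on that stretch and Proposition~\ref{prop:b} to the resulting iterated extension, arriving at the explicit value $c=br(r-1)+ar+1$. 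If you replace your by-hand closure argument with a citation of Theorem~\ref{thm:langer} (using its filtration, whose graded pieces are locally free of rank $<s$ when $W$ is not $\mu$-stable, in place of an arbitrary saturated subsheaf), your induction on rank does go through and yields a comparable constant $c\approx ra+br(r-1)/2+1$; as written, however, the key closure step is unproved and the local-freeness issue would break the induction.
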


The following result of Langer \cite[Theorem 4.1]{langersfund} plays 
a crucial r\^ole in the proof of Proposition~\ref{prop:c}. 

\begin{thm}[Langer]\label{thm:langer}
Let $X$ be  a smooth projective  variety over $k$, 
let $E$ be a strongly $\mu$-semistable sheaf 
with vanishing Chern classes. 
Then there exists a filtration of 
$E$ whose graded quotients are $\mu$-stable, strongly $\mu$-semistable 
locally free sheaves with vanishing Chern classes. 
\end{thm}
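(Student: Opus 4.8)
The plan is to produce the filtration directly from a \emph{strong Jordan--H\"older filtration} of $E$ and to check that each graded piece has the required properties; the induction is then on the rank and is almost automatic. The tool that makes everything work is Langer's Bogomolov-type inequality in positive characteristic: every strongly $\mu$-semistable sheaf $V$ on $X$ satisfies
\[
\Delta(V)\cdot H^{d-2}\ge 0,\qquad \Delta(V):=2\,\mathrm{rk}(V)\,c_2(V)-(\mathrm{rk}(V)-1)\,c_1(V)^2,
\]
where $H=c_1(\cO_X(1))$. Since $E$ has vanishing Chern classes, $c_1(E)=0$ and $c_2(E)\cdot H^{d-2}=0$, so $\Delta(E)\cdot H^{d-2}=0$; the crucial point is that this is an \emph{equality}, and this rigidity is what will be inherited by the subquotients.

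Concretely, first I would pass from a $\mu$-Jordan--H\"older filtration of $E$ to a strong one, i.e. a filtration $0=E_0\subset E_1\subset\dots\subset E_\ell=E$ whose graded pieces $G_i=E_i/E_{i-1}$ stay $\mu$-stable after every Frobenius pullback. Its existence rests on the fact that the strong Harder--Narasimhan filtration of $E$, and of all its Frobenius pullbacks, stabilises --- one of Langer's boundedness results. Each $G_i$ is then strongly $\mu$-stable of slope $0$. To see that $c_1(G_i)=0$ numerically and $\Delta(G_i)\cdot H^{d-2}=0$, I would use the additivity formula for the discriminant along a filtration with constant slope,
\[
\frac{\Delta(E)}{\mathrm{rk}(E)}\cdot H^{d-2}=\sum_i\frac{\Delta(G_i)}{\mathrm{rk}(G_i)}\cdot H^{d-2}-\sum_{i<j}\frac{\mathrm{rk}(E)}{\mathrm{rk}(G_i)\,\mathrm{rk}(G_j)}\Big(\frac{c_1(G_i)}{\mathrm{rk}(G_i)}-\frac{c_1(G_j)}{\mathrm{rk}(G_j)}\Big)^{2}\cdot H^{d-2},
\]
combined with the Hodge index theorem on a general complete intersection surface $S=H_1\cap\dots\cap H_{d-2}$ with $H_i\in|H|$: each difference $\tfrac{c_1(G_i)}{\mathrm{rk}(G_i)}-\tfrac{c_1(G_j)}{\mathrm{rk}(G_j)}$ is orthogonal to $H^{d-1}$, so its square paired with $H^{d-2}$ is $\le 0$, with equality precisely when the two normalised first Chern classes are numerically equal. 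Since $\Delta(G_i)\cdot H^{d-2}\ge 0$ by Bogomolov and the left-hand side is $0$, every summand on the right must vanish on its own: hence $\Delta(G_i)\cdot H^{d-2}=0$ and all $c_1(G_i)/\mathrm{rk}(G_i)$ agree numerically, so they equal $c_1(E)/\mathrm{rk}(E)=0$ and $c_1(G_i)=0$.

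It remains to show that a strongly $\mu$-stable $G$ with $c_1(G)=0$ and $\Delta(G)\cdot H^{d-2}=0$ is locally free with all numerical Chern classes zero. For local freeness I would compare $G$ with its reflexive hull $G^{\vee\vee}$. The quotient $Q=G^{\vee\vee}/G$ is supported in codimension $\ge 2$, so $c_1$ is unchanged while $\mathrm{ch}_2(Q)\cdot H^{d-2}\ge 0$; since forming the reflexive hull commutes with the finite flat morphisms $F^n$ up to codimension $\ge 2$, and slopes are unaffected by codimension-$\ge 2$ modifications, $G^{\vee\vee}$ is again strongly $\mu$-semistable, so $\Delta(G^{\vee\vee})\cdot H^{d-2}\ge 0$; but $\Delta(G^{\vee\vee})\cdot H^{d-2}=\Delta(G)\cdot H^{d-2}-2\,\mathrm{rk}(G)\,\mathrm{ch}_2(Q)\cdot H^{d-2}\le 0$, forcing $\mathrm{ch}_2(Q)\cdot H^{d-2}=0$, so $Q$ has no codimension-$2$ component. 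Restricting to a general complete intersection surface $S$, on which reflexive sheaves are locally free and on which $G|_S$ remains strongly $\mu$-semistable by the Mehta--Ramanathan restriction theorem in Langer's strong form, shows that $G$ is locally free along $S$; as $S$ moves in a base-point-free linear system, $G$ is locally free outside a closed set of codimension $\ge 3$, and, being reflexive, $G$ is then locally free everywhere. For the higher numerical Chern classes, $c_j(G)$ is detected on a general complete intersection subvariety $Y$ of dimension $j$, where $G|_Y$ is strongly $\mu$-semistable, locally free, with $c_1=0$ and $c_2\cdot H^{j-2}=0$; hence $G|_Y$ is numerically flat, and a numerically flat bundle has vanishing top Chern class --- this last input being part of Langer's structure theory of numerically flat sheaves, which reduces via restriction to general curves to the Lange--Stuhler description of strongly semistable bundles of degree $0$ on a curve.

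The main obstacle is exactly this last circle of ideas from \cite{langersfund}: establishing the Bogomolov-type inequality for strongly $\mu$-semistable sheaves in characteristic $p$, the strong form of the Mehta--Ramanathan restriction theorem, and the boundedness needed for the strong Harder--Narasimhan and Jordan--H\"older filtrations to exist, and then extracting local freeness and the vanishing of all Chern classes of the stable factors. The genuinely subtle points are the interaction of Frobenius pullback with reflexive hulls and with restriction to complete intersections; once the Bogomolov inequality is available, the Hodge-index/additivity bookkeeping that controls the first Chern classes is routine.
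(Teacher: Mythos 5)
The paper itself contains no proof of this statement: it is quoted verbatim as Langer's theorem \cite[Thm.~4.1]{langersfund} and used as a black box, so your sketch can only be measured against Langer's original argument. The ingredients you list (Langer's Bogomolov-type inequality $\Delta(V)\cdot H^{d-2}\ge 0$ for strongly $\mu$-semistable sheaves, the discriminant additivity plus Hodge index bookkeeping, restriction to general complete intersections, and the structure theory of numerically flat sheaves) are indeed the right ones, but two steps of your outline have genuine gaps.

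First, your starting point --- a ``strong Jordan--H\"older filtration'' of $E$ whose graded pieces remain $\mu$-\emph{stable} under every Frobenius pullback --- is both more than the theorem asserts and unjustified. The stabilisation of strong Harder--Narasimhan filtrations (Langer, \cite{langerpositive}) concerns \emph{semistability} and produces a filtration of $(F^{N})^{*}E$ for $N\gg 0$, not of $E$; there is no mechanism to descend it, and stability, unlike semistability, is not preserved by Frobenius pullback even for bundles with vanishing Chern classes, so a filtration of $E$ with strongly stable quotients need not exist. Fortunately only what the theorem states is needed: take an ordinary slope-Jordan--H\"older filtration of $E$ with saturated steps; its quotients are $\mu$-stable of slope $0$, and they are \emph{automatically} strongly $\mu$-semistable, because $F$ is flat, so $(F^{n})^{*}$ of a saturated subsheaf (resp.\ torsion-free quotient) of slope $0$ is a slope-$0$ subsheaf (resp.\ quotient) of the semistable sheaf $(F^{n})^{*}E$. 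Second, and more seriously, the endgame for local freeness fails as written: the vanishing of $\mathrm{ch}_2(G^{\vee\vee}/G)\cdot H^{d-2}$ only excludes codimension-$2$ components of the support of $G^{\vee\vee}/G$ and does not give $G=G^{\vee\vee}$, and even granting reflexivity, ``reflexive and locally free outside codimension $\ge 3$'' does not imply locally free (reflexive non-locally-free sheaves on threefolds are locally free away from finitely many points). The same remark applies to the vanishing of the higher Chern classes, for which you invoke ``Langer's structure theory of numerically flat sheaves'': local freeness and full Chern class vanishing of the stable factors are exactly the hard content of \cite[Thm.~4.1]{langersfund}, so at these points your sketch assumes what is to be proved rather than proving it. (A small slip: in the additivity formula for $\Delta$ the cross-term coefficient should be $\mathrm{rk}(G_i)\,\mathrm{rk}(G_j)/\mathrm{rk}(E)$, not $\mathrm{rk}(E)/(\mathrm{rk}(G_i)\,\mathrm{rk}(G_j))$; this does not affect the sign argument.)
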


\begin{proof}[Proof of Proposition \ref{prop:c}]
Take $a = a(X,r), b = b(X) \in \N$ so that the statement of 
Propsitions~\ref{prop:a}, \ref{prop:b} are satisfied, and 
set  $c := br(r-1)+ar+1$. We prove that the proposition is true for 
this choice of $c$. 

First, note that $(F^*)^nE_X = ((F^*)^nE)_X$ for any $n \in \N$. 
Hence $(F^*)^nE_X$  has vanishing Chern classes 
by Proposition~\ref{prop:chern}, and is strongly $\mu$-semistable by 
assumption, for all $n\ge 0$.

For $0 \leq n \leq c-1 = br(r-1)+ar$, 
define a filtration 
$\{((F^*)^nE_X)_q\}_{q=0}^{q_n}$ of $(F^*)^nE_X$ 
whose graded quotients are $\mu$-stable, strongly $\mu$-semistable 
with vanishing Chern classes, in the following way.
First, when $n=0$, take a filtration 
$\{(E_X)_q\}_{q=0}^{q_0}$ of $E_X$ 
whose graded quotients are $\mu$-stable, strongly $\mu$-semistable 
with vanishing Chern classes. 
(Such a filtration exists by Theorem \ref{thm:langer} because 
$E_X$ has vanishing Chern classes 
by Proposition \ref{prop:chern} and strongly $\mu$-semistable by 
assumption.) 
When we defined 
$\{((F^*)^{n-1}E_X)_q\}_{q=0}^{q_{n-1}}$, 
the pull-back 
$\{F^*((F^*)^{n-1}E_X)_q\}_{q=0}^{q_{n-1}}$ of it by $F^*$ 
defines a filtration of $(F^*)^nE_X$ whose graded quotients 
are strongly $\mu$-semistable with vanishing Chern classes. 
Then, using Theorem \ref{thm:langer} for the graded quotients, 
we can refine this filtration to a filtration 
$\{((F^*)^nE_X)_q\}_{q=0}^{q_n}$ of $(F^*)^nE_X$ 
whose graded pieces are $\mu$-stable, strongly $\mu$-semistable 
with vanishing Chern classes. 

By definition, we have 
$$ 1 \leq q_0 \leq q_1 \leq \cdots \leq q_{c-1} = q_{br(r-1)+ar} \leq r. $$
So, there exists some 
$j$ with $0 \leq j \leq b(r-1)^2+a(r-1)$ 
such that $q_{j} = \cdots = q_{j+b(r-1)+a} \allowbreak (=:Q)$. 

Put $G_{n,q} := ((F^*)^nE_X)_q/((F^*)^nE_X)_{q-1}$. Then, 
for each $1 \leq q \leq Q$, 
any subsequence of length $a$ of 
the sequence $\{G_{n,q}\}_{n=j}^{j+b(r-1)+a}$ 
satisfies the assumption of Proposition~\ref{prop:a}. 
Hence 
$\{G_{n,q}\}_{n=j+a}^{j+b(r-1)+a}$'s $(1 \leq q \leq Q)$ 
are isomorphic to 
the constant sequence $\{\cO_{X}\}_{n=j+a}^{j+b(r-1)+a}$. 
Then, we can apply Proposition~\ref{prop:b} to 
the sequence $\{(F^*)^nE_X\}_{n=j+a}^{j+b(r-1)+a}$. 
So $(F^*)^{j+b(r-1)+a}E_X = \cO_X^r$ (hence 
$(F^*)^{c-1}E_X = \cO_X^r$)   in $\Coh(X)$.

Therefore,  $((F^*)^{c-1}E)_X$ has the form $(\cO_X^r, \nabla)$ when 
regarded as an object in $\MIC(X)^{\qn}$ via the equivalence \eqref{evpn}. 
Then, by \eqref{eqn:can}, one has
$$((F^*)^cE)_X = F^*(\cO_X^r, \nabla) = (\cO_X^r, d)$$ 
in $\MIC(X)^{\qn}$. So $((F^*)^cE)_X \in \Crys(X/k)$ is constant.

\end{proof}

Proposition~\ref{prop:c} deals with the value of a lattice of an isocrystal  in $\Crys(X/k)$. To go up to  $\Crys(X/W_n)$, 
 we consider the deformation theory of crystals. 

Let $X$ be a smooth projective  variety over $k$ and fix 
$n,r \in \N$. Let us denote the restriction functor 
$\Crys(X/W_{n+1}) \lra \Crys(X/W_n)$ by $G \mapsto \ol{G}$. 
Let $\cD$ be the set of pairs $(G, \varphi)$ consisting of 
$G \in \Crys(X/W_{n+1})$ and an isomorphism 
$\varphi: \cO_{X/W_n}^r  \os{\simeq}{\lra} \ol{G}$ in $\Crys(X/W_n)$. 
Then $\cD$ is a pointed set, whose distinguished element is 
$(\cO_{X/W_{n+1}}, \id)$. The pullback 
$(G, \varphi) \mapsto (F^*G, F^*\varphi)$ by $F^*$ 
defines a morphism of pointed sets 
$F^*: \cD \lra \cD$. We denote by $H^n_{\crys}(X/k)$ the 
crystalline cohomology of $X$ over $k$, which is the same as  the de Rham cohomology $H^n(X, \Omega^\bullet_{X/k})$. 
\begin{prop}\label{prop:e}
Let the notations be as above. Then there is an isomorphism of pointed sets 
$$ e: \cD \os{\simeq}{\lra} H^1_{\crys}(X/k)^{r^2}. $$
Moreover, the following diagram is commutative$:$ 
\begin{equation}\label{eq:e}
\begin{CD}
\cD @>e>> H^1_{\crys}(X/k)^{r^2} \\ 
@V{F^*}VV @V{F^*}VV \\ 
\cD @>e>> H^1_{\crys}(X/k)^{r^2}. 
\end{CD}
\end{equation}
\end{prop}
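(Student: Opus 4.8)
The plan is to recognize $\cD$ as the set of deformations, along the divided-power square-zero extension $\Spec W_n\hookrightarrow\Spec W_{n+1}$, of the trivial crystal $\cO^r_{X/W_n}$ equipped with a trivialization of its reduction, and to identify this set with the first crystalline cohomology of the sheaf of infinitesimal automorphisms of $\cO^r_X$. Since the kernel of $W_{n+1}\to W_n$ is $p^nW_{n+1}\cong k$, that sheaf is ${\cal E}nd_{\cO_{X/k}}(\cO^r_{X/k})=\Mat_r(\cO_{X/k})\cong\cO_{X/k}^{\oplus r^2}$, so the relevant cohomology group is $H^1_{\crys}(X/k,\Mat_r(\cO_{X/k}))=H^1_{\crys}(X/k)^{r^2}$; this is what $e$ should land in.

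I would construct $e$ by a \v{C}ech computation, using the Zariski descent property of $\Crys(-/W_m)$. Given $(G,\varphi)\in\cD$, note first that $G$, having locally free reduction, is itself locally free, so on a suitable Zariski covering $\{U_\alpha\}$ of $X$ the trivialization $\varphi$ lifts to an isomorphism $\varphi_\alpha\colon\cO^r_{U_\alpha/W_{n+1}}\xrightarrow{\ \simeq\ }G|_{U_\alpha}$ in $\Crys(U_\alpha/W_{n+1})$. Set $U_{\alpha\beta}:=U_\alpha\cap U_\beta$; then $\varphi_\beta^{-1}\circ\varphi_\alpha$ is an automorphism of $\cO^r_{U_{\alpha\beta}/W_{n+1}}$ reducing to the identity modulo $p^n$, hence of the form $\id+p^n\eta_{\alpha\beta}$ with $\eta_{\alpha\beta}$ a section of $\Mat_r(\cO_{X/k})$ over $U_{\alpha\beta}$; the compatibility of the $\varphi_\alpha$ gives the additive cocycle condition, and another choice of the $\varphi_\alpha$ changes $(\eta_{\alpha\beta})$ by a coboundary. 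This defines $e(G,\varphi)\in H^1_{\crys}(X/k)^{r^2}$ with $e(\cO^r_{X/W_{n+1}},\id)=0$. Conversely, a cocycle $(\eta_{\alpha\beta})$ glues the $\cO^r_{U_\alpha/W_{n+1}}$ along $\id+p^n\eta_{\alpha\beta}$ to a crystal $G$ together with a trivialization of $\ol{G}$, and these two operations are mutually inverse, so $e$ is a bijection. (Equivalently, via \eqref{evdn} a lift of $(\cO^r_{D_n},d)\in\MIC(D_n)^{\qn}$ is $(\cO^r_{D_{n+1}},d+p^nA)$ for a matrix $A$ of one-forms on the PD-envelope, well defined up to gauge equivalence, and $e(G,\varphi)$ is the de Rham class of $A$; but the \v{C}ech picture makes the Frobenius compatibility most transparent.)

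For the commutativity of \eqref{eq:e}: $F^*$ fixes the base point since $F^*\cO^r_{X/W_{n+1}}=\cO^r_{X/W_{n+1}}$ and $F^*\id=\id$, and $\{F^*\varphi_\alpha\}$ is a system of local trivializations adapted to $F^*(G,\varphi)$. Because the Frobenius pullback on $\cO_{X/W_{n+1}}$ is $\sigma_W$-semilinear and $\sigma_W(p)=p$, one has $F^*(\varphi_\beta^{-1}\varphi_\alpha)=\id+p^nF^*(\eta_{\alpha\beta})$, so the cocycle attached to $F^*(G,\varphi)$ is $(F^*\eta_{\alpha\beta})$ and its class is the image of $e(G,\varphi)$ under the $\sigma$-linear Frobenius $F^*$ on $H^1_{\crys}(X/k)^{r^2}$. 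Hence $e\circ F^*=F^*\circ e$.

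The main obstacle is setting up and verifying $e$ rigorously — namely that $G$ is locally free, that the local lifts $\varphi_\alpha$ exist, and that the class of $(\eta_{\alpha\beta})$ is independent of all choices and that the construction is bijective. This is the standard deformation theory of crystals along a PD square-zero extension of the base (Berthelot's book, Berthelot--Ogus), and once it is in hand the commutativity of \eqref{eq:e} is a routine naturality check.
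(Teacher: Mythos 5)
Your reduction of the statement to ``deformation theory of crystals along the square-zero PD extension $W_n\hookrightarrow W_{n+1}$'' is the right heuristic and the target $H^1_{\crys}(X/k)^{r^2}$ is correct, but the \v{C}ech construction you propose breaks at its first step. You claim that on a suitable Zariski covering the trivialization $\varphi$ lifts to an isomorphism $\varphi_\alpha\colon \cO^r_{U_\alpha/W_{n+1}}\to G|_{U_\alpha}$ \emph{in} $\Crys(U_\alpha/W_{n+1})$, i.e.\ to an isomorphism of crystals. This is false in general: over an affine $U$ with a smooth lift $U_{n+1}$, a deformation of the trivial crystal is $(\cO_{U_{n+1}}^r,\,d+p^nA)$ with $A$ a matrix of closed $1$-forms on $U$, and it is isomorphic to the trivial pair exactly when $A$ is exact; since closed $1$-forms on an affine need not be exact in characteristic $p$ (e.g.\ $x^{p-1}dx$), the local deformation set is $H^1_{\dR}(U/k)^{r^2}=H^1_{\crys}(U/k)^{r^2}\neq 0$. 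So deformations are not Zariski-locally trivial as crystals; your transition elements $\eta_{\alpha\beta}$, being automorphisms of the crystal $\cO^r_{U_{\alpha\beta}/W_{n+1}}$, are forced to be \emph{horizontal} matrices, and \v{C}ech $H^1$ of horizontal matrix-valued sections for a Zariski cover does not compute $H^1_{\crys}(X/k)^{r^2}$; the asserted bijectivity fails for the same reason. (A smaller issue: ``$G$ is locally free because its reduction is'' needs flatness of $G$ over $W_{n+1}$ --- over a point, $W_{n+1}^{r-1}\oplus W_n$ has free reduction mod $p^n$ --- which holds in the paper's application but must be invoked.)

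What is true locally is only that $\varphi$ lifts to an isomorphism of the underlying $\cO$-modules; the failure of horizontality is recorded by a matrix of closed $1$-forms $A_\alpha$ (the connection becomes $d+p^nA_\alpha$), and the invariant of $(G,\varphi)$ is the class of the pair $(\{A_\alpha\},\{B_{\alpha\beta}\})$ in the degree-one \v{C}ech--de Rham \emph{hyper}cohomology, not in a \v{C}ech $H^1$ of a sheaf of flat sections. This is exactly what the paper does, working on the PD-envelope $D_{n+1}$ of $X\hra\mathbb{P}^N_{W_{n+1}}$ via \eqref{evdn}; your parenthetical ``MIC picture'' is closer to this, but it assumes the lifted module is globally free, which is not automatic either. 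Finally, in the corrected setting the Frobenius compatibility is not a purely formal naturality check: to compute $F^*$ on the de Rham component one must choose a lift of Frobenius on the thickening --- the paper uses the PD-morphism $F_{D_{n+1}}$ induced by the coordinatewise $p$-th power map on $\mathbb{P}^N_{W_{n+1}}$, which reduces to $F_{D_1}$ modulo $p$ --- so this step needs more than the semilinearity remark in your sketch.
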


\begin{proof}
For $\ell = n, n+1$, let $D_\ell$ be the PD-envelope of 
the closed immersion $X \os{\iota}{\hra} \mathbb{P}^N_k 
\hra \mathbb{P}^N_{W_\ell}$. Using the equivalence 
$\Crys(X/W_\ell) \cong \MIC(D_\ell)^{\qn}$ of \eqref{evdn}, we consider 
the pointed set $\cD$ in terms of objects in $\MIC(D_\ell)^{\qn}$. Namely, 
we denote the restriction $\MIC(D_{n+1}) \lra \MIC(D_n)^{\qn}$ by 
$(G,\nabla) \mapsto (\ol{G}, \ol{\nabla})$ and we regard 
$\cD$ as the set of pairs $((G,\nabla), \varphi)$ consisting of 
$(G, \nabla) \in \MIC(D_{n+1})^{\qn}$ and an isomorphism 
$\varphi: (\cO_{D_n}, d)  \os{\simeq}{\lra} 
(\ol{G}, \ol{\nabla})$ in $\MIC(D_n)^{\qn}$. 

Assume given an object $G := ((G,\nabla), \varphi)$ in $\cD$. 
Take an affine open covering $\cU = \{U_{\alpha}\}_{\alpha}$ of $D_{n+1}$ and 
for each $\alpha$, take 
an isomorphism $\psi_{\alpha}: \cO_{U_{\alpha}}^r 
\os{\simeq}{\lra} G|_{U_{\alpha}}$ 
which lifts $\varphi|_{D_n \times_{D_{n+1}} U_{\alpha}}$. 
On each $U_{\alpha}$, the connection ${\psi_{\alpha}}^*(\nabla)$ 
is written as 
$d + p^nA_{\alpha}$, where $A_{\alpha} \in 
M_r(\Gamma(U_{\alpha},\Omega^1_{D_1}))$. 
On each $U_{\alpha\beta} := U_{\alpha} \cap U_{\beta}$,  the
gluing $(\psi_{\alpha}|_{U_{\alpha\beta}})^{-1} \circ 
(\psi_{\beta}|_{U_{\alpha\beta}})$ 
is given by $1+p^nB_{\alpha\beta}$, where $B_{\alpha\beta} 
\in M_r(\Gamma(U_{\alpha\beta},\cO_{D_1}))$. 
Then, $dA_{\alpha} = 0$ 
by the integrability of the connection ${\psi_{\alpha}}^*(\nabla)$ and 
$B_{\beta\gamma} - B_{\alpha\gamma} + B_{\alpha\beta} = 0$ 
by the cocycle condition for the maps 
$(\psi_{\alpha}|_{U_{\alpha\beta}})^{-1} \circ 
(\psi_{\beta}|_{U_{\alpha\beta}})$. 
Also, by the compatibility of the connection with the gluing, we have 
the equality 
$$ 
(1+p^nB_{\alpha\beta})^{-1}d(1+p^nB_{\alpha\beta}) + 
(1+p^nB_{\alpha\beta})^{-1}p^nA_{\alpha}(1+p^nB_{\alpha\beta}) 
= p^nA_{\beta}. $$
We see from this the equality $A_{\beta} - A_{\alpha} = dB_{\alpha\beta}$. 
So $(\{A_{\alpha}\}, \{B_{\alpha\beta}\})$ defines 
a $1$-cocycle of 
${\rm Tot}\,\Gamma(\cU,\Omega_{D_1}^{\bullet})^{r^2}$. 
We define $e(G)$ to be the class of this $1$-cocycle in 
the cohomology 
$H^1({\rm Tot}\,\Gamma(\cU,\Omega_{D_1}^{\bullet})^{r^2}) 
= H^1_{\crys}(X/k)^{r^2}$. 

In order  to show that this is well-defined, we need to check that 
$e(G)$ is independent of the choice of the 
affine open covering $\cU = \{U_{\alpha}\}_{\alpha}$ and 
the isomorphisms $\{\psi_{\alpha}\}_{\alpha}$. 
If we choose another set of isomorphisms 
$\{\psi'_{\alpha}\}_{\alpha}$, we have another set of 
matrices $(\{A'_{\alpha}\}, \{B'_{\alpha\beta}\})$. 
Then, 
on each $U_{\alpha}$, 
the map $\psi_{\alpha}^{-1} \circ \psi'_{\alpha}$ 
is given by $1+p^nC_{\alpha}$, where $C_{\alpha} 
\in M_r(\Gamma(U_{\alpha},\cO_{D_1}))$, and 
we see by direct calculation the equalities 
$dC_{\alpha} = A'_{\alpha} - A_{\alpha}$, 
$C_{\beta} - C_{\alpha} = B'_{\alpha\beta} - B_{\alpha\beta}$. 
So the class $e(G)$ does not depend on the choice of 
the isomorphisms $\{\psi_{\alpha}\}_{\alpha}$. 
One can prove the independence of the choice of affine open covering 
$\cU  = \{U_{\alpha}\}_{\alpha}$ by taking a refinement. 
So we obtain the map $e: \cD \lra  H^1_{\crys}(X/k)^{r^2}$, and 
it is easily seen that this is a map of pointed sets. 
One can prove the bijectivity of $e$ by considering the 
above argument in reverse direction. 

Finally, we prove the commuativity of the diagram \eqref{eq:e}. 
Let $F_{\mathbb{P}}: \mathbb{P}^N_{W_{n+1}} \lra \mathbb{P}^N_{W_{n+1}}$ 
be the 
$\sigma_{W_{n+1}}^*$-linear map which sends the coordinates to their 
$p$-th powers. Then, there exists a unique PD-morphism 
$F_{D_{n+1}}: D_{n+1} \lra D_{n+1}$ which makes the following diagram 
commutative: 
\begin{equation*}
\begin{CD}
X @>>> D_{n+1} @>>> \mathbb{P}^N_{W_{n+1}} \\ 
@V{F}VV @V{F_{D_{n+1}}}VV @V{F_{\mathbb{P}}}VV \\ 
X @>>> D_{n+1} @>>> \mathbb{P}^N_{W_{n+1}}. 
\end{CD}
\end{equation*}
Because $F_{D_{n+1}}$ mod $p$ is equal to the Frobenius map 
$F_{D_1}$ for $D_1$, we see 
(from the above expression of cocycle) that 
the class $e(G) = [(\{A_{\alpha}\}, \{B_{\alpha\beta}\})]$ 
is sent by $F^*$ (on cohomology) to 
$[(\{F_{D_1}^*A_{\alpha}\}, \{F_{D_1}^*B_{\alpha\beta}\})] 
= e(F^*(G))$. 
From this, we see the desired commutativity. 
\end{proof}

\begin{prop}\label{prop:d}
Let $X$ be a connected smooth projective  variety over $k$ 
with  trivial \'etale fundamental group. 
Let $\cE \in \Conv(X/K)$. 
Let $E$ be a lattice of $\cE$ such that 
the restriction $E_X \in \Crys(X/k)$ 
is constant. Then there exists a positive integer $d = d(X)$ 
such that, for any $n \in \N$,  the restriction
$((F^*)^{d(n-1)}E)_n$ of 
$(F^*)^{d(n-1)}E \in \Crys(X/W)$ to $\Crys(X/W_n)$ is 
constant.
\end{prop}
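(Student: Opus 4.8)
The plan is to induct on $n$, feeding the deformation-theoretic description of Proposition~\ref{prop:e} with the vanishing $(F^*)^{d}=0$ on $H^1_{\crys}(X/k)$. So first I would establish that the absolute Frobenius acts nilpotently on $H^1_{\crys}(X/k)=H^1_{\dR}(X/k)$, and set $d=d(X):=b(X)+1$, where $b(X)$ is the integer produced in the proof of Proposition~\ref{prop:b}. Indeed, $F^*$ kills every differential form of positive degree (since $F^*(df)=d(f^p)=0$), so on the de Rham complex $F^*$ factors through $\cO_X$ placed in degree $0$; on cohomology this means that $F^*$ annihilates the Hodge piece $\Fil^1 H^1_{\dR}(X/k)=\ker\!\bigl(H^1_{\dR}(X/k)\to H^1(X,\cO_X)\bigr)$ and is compatible, through the edge map $H^1_{\dR}(X/k)\to H^1(X,\cO_X)$, with $F^*$ on $H^1(X,\cO_X)$. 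Since $\pi_1^{\et}(X)=\{1\}$, the argument recalled in the proof of Proposition~\ref{prop:b} (Mumford's splitting $H^1(X,\cO_X)=H^1(X,\cO_X)_{\mathrm{nilp}}\oplus H^1(X,\cO_X)_{\mathrm{ss}}$ together with $H^1(X,\cO_X)_{\mathrm{ss}}=H^1_{\et}(X,\F_p)\otimes_{\F_p}k=0$) gives $(F^*)^{b(X)}=0$ on $H^1(X,\cO_X)$; hence for any $v\in H^1_{\dR}(X/k)$ the class $(F^*)^{b(X)}v$ lies in $\Fil^1$, so $(F^*)^{b(X)+1}v=0$. Thus $(F^*)^{d}=0$ on $H^1_{\crys}(X/k)$, and a fortiori on $H^1_{\crys}(X/k)^{r^2}$.

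The induction on $n$ then goes as follows. For $n=1$ we have $((F^*)^{0}E)_1=E_X$, constant by hypothesis. Assume $((F^*)^{d(n-1)}E)_n$ is constant; put $E':=(F^*)^{d(n-1)}E$, so that $(F^*)^{dn}E=(F^*)^{d}E'$ and $dn=d((n+1)-1)$, and it suffices to prove $((F^*)^{d}E')_{n+1}$ constant. Since $F^*$ preserves $p$-torsion-freeness (cf. the proof of Proposition~\ref{prop:chern}), $E'$ is again a lattice, hence $E'_{n+1}$ is flat over $W_{n+1}$; its reduction modulo $p$ is the underlying sheaf of the constant crystal $E'_X$ (the Frobenius pullback of the constant $E_X$, cf. \eqref{eqn:can}), so $E'_{n+1}$ is locally free, and its restriction to $\Crys(X/W_n)$ is $E'_n\cong\cO_{X/W_n}^{r}$. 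Choosing such an isomorphism $\varphi$, the pair $(E'_{n+1},\varphi)$ is an element of the pointed set $\cD$ of Proposition~\ref{prop:e} (for the present $n$), of class $\xi:=e(E'_{n+1},\varphi)\in H^1_{\crys}(X/k)^{r^2}$. Applying $(F^*)^{d}$, using that restriction commutes with $F^*$ and the canonical isomorphism $F^*\cO_{X/W_n}\cong\cO_{X/W_n}$, one gets $(F^*)^{d}(E'_{n+1},\varphi)=\bigl(((F^*)^{d}E')_{n+1},\widetilde\varphi\bigr)\in\cD$, whose class, by the commutative square \eqref{eq:e}, equals $(F^*)^{d}\xi=0$. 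As $e$ is a bijection of pointed sets, $\bigl(((F^*)^{d}E')_{n+1},\widetilde\varphi\bigr)$ is the base point of $\cD$, i.e. $((F^*)^{dn}E)_{n+1}=((F^*)^{d}E')_{n+1}\cong\cO_{X/W_{n+1}}^{r}$ is constant. This closes the induction and proves Proposition~\ref{prop:d}.

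The crux, and the only place where the hypothesis $\pi_1^{\et}(X)=\{1\}$ enters this argument, is the nilpotence of $F^*$ on $H^1_{\crys}(X/k)$; everything else is bookkeeping organized by Proposition~\ref{prop:e}. A point to check with care is that the obstruction class $\xi$ always lives in the single, $n$-independent group $H^1_{\crys}(X/k)^{r^2}$, so that one fixed $d$ works for all $n$ — this is exactly the content of the compatibility \eqref{eq:e}. One should also make sure, when invoking Proposition~\ref{prop:e}, that the deformations occurring are flat over the relevant $W_{n+1}$ (equivalently locally free), which holds here because $E$, hence each $(F^*)^{m}E$, is $p$-torsion free.
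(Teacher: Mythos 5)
Your proof is correct and follows essentially the same route as the paper: nilpotence of $F^*$ on $H^1_{\crys}(X/k)$ (forced by $\pi_1^{\et}(X)=\{1\}$ through $H^1(X,\cO_X)$), then induction on $n$ via the obstruction classes of Proposition~\ref{prop:e} and the Frobenius compatibility \eqref{eq:e}. The only cosmetic difference is that you obtain the nilpotence (with the explicit bound $d=b+1$) from the vanishing of $F^*$ on $\Fil^1$ plus the edge map to $H^1(X,\cO_X)$, whereas the paper phrases the same facts as a nilpotent/semisimple decomposition of $H^1_{\crys}(X/k)$ itself; your extra check that the relevant deformations are flat over $W_{n+1}$ is a welcome precision.
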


\begin{proof}
We have the decomposition 
$H^1_{\crys}(X/k) = H^1_{\crys}(X/k)_{\rm nilp} \oplus 
H^1_{\crys}(X/k)_{\rm ss}$ of $H^1_{\crys}(X/k)$ as in 
the proof of Proposition~\ref{prop:b}, where $H^1(X, \cO_X)$ is replaced by 
$H^1_{\crys}(X/k)$. As $F^*$ is $0$ on the image of $H^0(X, \Omega^1_{X/k})$ in 
$H^1_{\crys}(X/k)$, one has 
$ H^1_{\crys}(X/k)_{\rm ss}  \subset H^1(X, \cO_X)_{\rm ss}=0  $ by \eqref{eqn:O},
and there exists some $d \in \N$ 
such that $(F^*)^d$ acts by $0$ on 
$H^1_{\crys}(X/k)_{\rm nilp}$, 
since $H^1_{\crys}(X/k)_{\rm nilp}$ is finite-dimensional. 
So $(F^*)^d$ acts by $0$ on $H^1_{\crys}(X/k)$. 
We prove the proposition for this choice of $d$, by induction on $n$. 

Assume that 
$((F^*)^{c+d(n-1)}E)_n$ is 
constant. 
Then $((F^*)^{c+d(n-1)}E)_{n+1}$ defines the class 
$e(((F^*)^{c+d(n-1)}E)_{n+1})$ in $H^1_{\crys}(X/k)^{r^2}$ by 
Proposition \ref{prop:e}. Then, by definition of $d$, we have 
$0 = (F^*)^{d}e(((F^*)^{c+d(n-1)}E)_{n+1}) = 
e(((F^*)^{c+dn}E)_{n+1})$, and so $((F^*)^{c+dn}E)_{n+1}$
is constant again by Proposition~\ref{prop:e}.  This finishes the proof. 
\end{proof}

Combining Propositions \ref{prop:c} and \ref{prop:d}, we obtain the 
following: 

\begin{cor}\label{cor:d}
Let $X$ be a connected smooth projective variety over $k$ 
with trivial \'etale fundamental group.
 Let $r$ be a positive integer. 
Let $\cE \in \Conv(X/K)$  of rank $r$ 
and let $E$ be a lattice of $\cE$ such that 
$E_X \in \Coh(X)$ is strongly $\mu$-semistable.
Let $c = c(X,r)$ and $d = d(X)$ be as in Proposition~\ref{prop:c} 
and Proposition~\ref{prop:d}.  
Then, for any $n \in \N$, the restriction
$((F^*)^{c+d(n-1)}E)_n$ 
  of 
$(F^*)^{c+d(n-1)}E \in \Crys(X/W)$ to $\Crys(X/W_n)$ is 
constant.
\end{cor}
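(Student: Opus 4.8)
The plan is to combine Propositions~\ref{prop:c} and \ref{prop:d} in the obvious way, being careful about where the Frobenius pullbacks are applied. First I would fix the positive integers $c = c(X,r)$ and $d = d(X)$ provided by Propositions~\ref{prop:c} and \ref{prop:d}. Applying Proposition~\ref{prop:c} to $\cE$ and the lattice $E$, whose value $E_X$ is strongly $\mu$-semistable by hypothesis, yields that $((F^*)^c E)_X \in \Crys(X/k)$ is constant.

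Next I would feed this into Proposition~\ref{prop:d}. The point is that $(F^*)^c E$ is itself a lattice: since $F^*$ preserves $p$-torsion-freeness of crystals (this is noted in the proof of Proposition~\ref{prop:chern}, using the equivalence \eqref{evformal} and a local faithfully flat lift of $F$), and since $\Q\otimes (F^*)^c E = (F^*)^c(\Q\otimes E) = (F^*)^c \cE$, we see that $(F^*)^c E$ is a lattice of the convergent isocrystal $(F^*)^c\cE \in \Conv(X/K)$. Its value on $X$ is constant as a crystal on $X/k$, so $(F^*)^c E$ satisfies the hypothesis of Proposition~\ref{prop:d}. That proposition then gives: for every $n \in \N$, the restriction $((F^*)^{d(n-1)}(F^*)^c E)_n = ((F^*)^{c+d(n-1)}E)_n$ to $\Crys(X/W_n)$ is constant. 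Since $c+d(n-1) = c + d(n-1)$, this is exactly the assertion of the corollary.

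The only point requiring care — and the closest thing to an obstacle — is the bookkeeping that $(F^*)^{d(n-1)}\circ (F^*)^c = (F^*)^{c+d(n-1)}$ on $\Crys(X/W)$ and that the restriction functor $\Crys(X/W)\to\Crys(X/W_n)$ of \eqref{rest} commutes with $F^*$, which was recorded in the list of Frobenius-compatibilities in the Preliminaries. Once this is in place the argument is a two-line composition. I would therefore present the proof as: apply Proposition~\ref{prop:c}, observe $(F^*)^c E$ is a lattice of $(F^*)^c\cE$ with constant value on $X/k$, apply Proposition~\ref{prop:d} to it, and rewrite the resulting exponent as $c+d(n-1)$.
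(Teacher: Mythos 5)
Your argument is correct and is exactly the paper's intended derivation: the paper simply states the corollary as obtained ``by combining Propositions~\ref{prop:c} and \ref{prop:d}'', and your proof spells this out, including the two points worth making explicit (that $(F^*)^cE$ is again a $p$-torsion free lattice, now of $(F^*)^c\cE\in\Conv(X/K)$, and that restriction to $\Crys(X/W_n)$ commutes with $F^*$). Nothing is missing.
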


Now we can finish the proof of Theorem \ref{thm1}: 

\begin{proof}[Proof of Theorem \ref{thm1}]
Let $r$ be the rank of $\cE$ and let 
$c = c(X,r)$, $d = d(X)$ be 
as in Proposition~\ref{prop:c}, Proposition~\ref{prop:d} for 
$X$ and $r$. Apply Corollary~\ref{cor:d} to 
$\cE^{(c+d(n-1))}$ and its lattice 
$E^{(c+d(n-1))}$ for each $n \geq 1$. 
Then we see that 
the restriction of 
$G^{(n)} := (F^*)^{c+d(n-1)}E^{(c+d(n-1))}$ to 
$\Crys(X/W_n)$, which we denote by $G^{(n)}_n$, 
is constant. Note that, for any $n \geq 1$, this is a lattice of 
$(F^*)^{c+d(n-1)}\cE^{(c+d(n-1))} = \cE$. 

We put $E := G^{(1)}$ so that it is a lattice of $\cE$ with 
$E_1 \in \Crys(X/k)$ constant. 
We may assume by replacing $G^{(n)}$ by $p^{m_n}G^{(n)}$ 
for suitable $m_n \in \Z$ that $G^{(n)} \subseteq E,  \ 
G^{(n)} \not\subseteq pE$. 
We further fix a natural number $m\ge 1$.
Then, for any $n\ge m$,  the image of 
the composite map
\begin{align} 
H^0_{\crys}(X/W_n, G^{(n)}_n) & \lra 
H^0_{\crys}(X/W_n, E_n) \label{eq:thm1-1} \\ 
& \lra H^0_{\crys}(X/W_n, E_n/p^mE_n) 
= H^0_{\crys}(X/W_m, E_m) \nonumber 
\end{align}
is not zero: Otherwise, as
$G^{(n)}_n$ is constant, it would be contained in $p^mE_n\subset pE_n$. 
Hence $G^{(n)}$ is contained in $pE$, 
which is a 
contradiction. So, for $n\ge m$,  the map 
$$ H^0_{\crys}(X/W_n, E_n) 
\lra H^0_{\crys}(X/W_n, E_n/p^mE_n) 
= H^0_{\crys}(X/W_m, E_m) $$
is non-zero. Hence 
$$\{ {\rm Im}
\big(H^0_{\crys}(X/W_n,E_n) \lra H^0_{\crys}(X/W_m,E_m)\big)\}_{n \geq 1}$$ 
is a decreasing family of non-zero 
$W_m$-submodules  of the finite type $W_m$-module
$H^0_{\crys}(X/W_m,E_m)$. 
So, $W_m$ being an Artinian ring,  the family is stationary, thus non-zero,  and 
$$0 \neq \bigcap_{m\le n \in \N}{\rm Im}
\big(H^0_{\crys}(X/W_n,E_n) \lra H^0_{\crys}(X/W_m,E_m)\big).$$
 Thus the system
$\{H^0_{\crys}(X/W_n,E_n)\}_n$ satisfies the Mittag-Leffler condition and 
$$ 0\neq H^0_{\crys}(X/W,E) = \varprojlim_n H^0_{\crys}(X/W_n,E_n).$$

As $E$ is $p$-torsion free,  $H^0_{\crys}(X/W,E)$ is a free module of rank $s$ over $W$, for some $1\le s \le r$. 
If $s<r$, then the quotient $Q:=E/\big(H^0_{\crys}(X/W,E)\otimes_W \cO_{X/W}\big) \in \Crys(X/W)$ is nonzero. The $p$-torsion of $Q$
is identified with the kernel of the homomorphism 
$$(H^0_{\crys}(X/W,E)/p) \otimes_k \cO_{X/k} = 
\cO_{X/k}^s \to E_1 = H^0_{\crys}(X/k, E_1) \otimes_k \cO_{X/k}=\cO_{X/k}^r $$
in $\Crys(X/k)$, which is zero. Thus $Q \in \Crys(X/W)$ is $p$-torsion free. 
By multiplying the composite map $G^{(n)}\hookrightarrow E \twoheadrightarrow Q$ 
with a suitable $p$-power, we obtain a map $G^{(n)} \to Q$ whose image 
is not contained in $pQ$. Then the diagram \eqref{eq:thm1-1} with 
$E_n$ replaced by $Q_n$ shows that $H^0_{\crys}(X/W,Q)\neq 0$.
On the other hand, one has the exact sequence
$
0\to H^0_{\crys}(X/W,E)\otimes_W \cO_{X/W}\xrightarrow{\iota} E\xrightarrow{q} Q\to 0
$
 in $\Crys(X/W)$. By definition, $H^0_{\crys}(\iota)$ is an isomorphism and by Proposition~\ref{rem:eo} (2), $H^0_{\crys}(q)$ is surjective. Thus  $H^0_{\crys}(X/W,Q)=0$, a contradiction. Thus $s=r$ and $E$ is constant in $\Crys(X/W)$, thus $\cE$ is constant in $\Crys(X/W)_{\Q}$.  This finishes the proof. 

\end{proof}

We give a proof of Corollary \ref{cor}. 

\begin{proof}[Proof of Corollary \ref{cor}]
We check that any infinitesimal isocrystal $\cE = \Q \otimes E 
\in {\rm Inf}(X/W)_{\Q}$ satisfies the assumption of Theorem \ref{thm1}. 
By Proposition \ref{ber} below, the functor 
$F^*: {\rm Inf}(X/W) \lra {\rm Inf}(X/W)$ is an equivalence. 
Thus the $F^n$-division $\cE^{(n)}$ of $\cE$ has the form 
$\Q \otimes E^{(n)}$ for some $E^{(n)} \in {\rm Inf}(X/W)$. 
Then the value $E^{(n)}_X$ of $E^{(n)}$ at $X$ has the structure 
of an object in ${\rm Inf}(X/k)$, which is constant by 
the affirmation \cite{esnaultmehta} of Gieseker's conjecture. 
So $E^{(n)}_X$ is isomorphic to $\cO_X^r$ for some $r$ and hence 
strongly $\mu$-semistable. 
\end{proof}

\begin{prop}\label{ber}
For a smooth variety $X$ over $k$, the functor 
$$F^*: {\rm Inf}(X/W) \lra {\rm Inf}(X/W)$$ is an equivalence. 
\end{prop}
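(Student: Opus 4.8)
The plan is to reduce the statement, via Zariski descent, to a local assertion about arithmetic $\cD$-modules, and then to prove it by assembling Berthelot's Frobenius descent one finite level at a time. First, using the Zariski descent property of ${\rm Inf}(X/W)$ recorded above and the functoriality of $F^*$, I would reduce to the case in which $X$ lifts to an affine $p$-adic smooth formal scheme $X_W = \Spf A$ over $W$ and $F$ lifts to a morphism $F_W\colon X_W \to X_W$ over $\sigma_W$; such a lift exists since $X_W$ is smooth over $W$, and it is flat by the local flatness criterion (as $A$ is $p$-torsion free). Because full faithfulness of $F^*$ is a Zariski-local property and essential surjectivity then follows by gluing the local preimages — which are canonical thanks to full faithfulness — it suffices to prove that $F^*$ is an equivalence in this local situation.

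Next I would pass to finite level over $W$. By \eqref{projinf} one has ${\rm Inf}(X/W) \os{\simeq}{\lra} \varprojlim_n {\rm Inf}(X/W_n)$ compatibly with $F^*$, so it is enough to show that $F^*\colon {\rm Inf}(X/W_n) \to {\rm Inf}(X/W_n)$ is an equivalence for each $n$, compatibly with the restriction functors ${\rm Inf}(X/W_{n+1}) \to {\rm Inf}(X/W_n)$. Via the equivalence \eqref{1.7inf}, ${\rm Inf}(X/W_n)$ becomes the category of $\cO_{X_n}$-coherent left $\cD_{X_n/W_n}$-modules, with $F^*$ corresponding to $F_W^*$; and $\cD_{X_n/W_n} = \varinjlim_m \cD^{(m)}_{X_n/W_n}$ is the increasing union, with injective transition maps, of the rings of PD-differential operators of level $m$ (\cite{berthelotI}). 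Thus an $\cO_{X_n}$-coherent $\cD_{X_n/W_n}$-module is exactly a coherent sheaf equipped with a compatible family of $\cO_{X_n}$-coherent $\cD^{(m)}_{X_n/W_n}$-module structures for all $m \geq 0$.

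The crucial input is Berthelot's Frobenius descent: for each $m$, $F_W^*$ induces an equivalence between $\cO_{X_n}$-coherent left $\cD^{(m)}_{X_n/W_n}$-modules and $\cO_{X_n}$-coherent left $\cD^{(m+1)}_{X_n/W_n}$-modules (\cite[Thm.~4.1.3]{berthelotII}, the descent already used in Remark~\ref{rmk:Fdivconv}), and these equivalences are compatible with the inclusions $\cD^{(m)}_{X_n/W_n} \subseteq \cD^{(m+1)}_{X_n/W_n}$. Granting this: (i) for $M \in {\rm Inf}(X/W_n)$ the sheaf $F_W^*M$ inherits a compatible family of $\cD^{(m+1)}_{X_n/W_n}$-structures ($m \geq 0$), which — since a level-$1$ structure restricts to a level-$0$ structure — is the same as a $\cD_{X_n/W_n}$-module structure, so $F_W^*$ takes values in ${\rm Inf}(X/W_n)$; (ii) given $N \in {\rm Inf}(X/W_n)$, descending its level-$(m+1)$ structure for each $m$ and invoking uniqueness of the descent together with the above compatibility produces a single $N' \in {\rm Inf}(X/W_n)$ with $F_W^*N' \cong N$; (iii) since $\Hom$ in ${\rm Inf}(X/W_n)$ is the intersection over $m$ of the $\Hom_{\cD^{(m)}_{X_n/W_n}}$-groups and $F_W^*$ is bijective on each of the latter compatibly, it is bijective on the intersection. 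Hence $F^*$ is an equivalence on ${\rm Inf}(X/W_n)$; since Berthelot's descent is compatible with the reduction $W_{n+1} \to W_n$, these equivalences are compatible with the restriction functors, and passing to $\varprojlim_n$ followed by gluing over the original Zariski covering completes the proof.

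The step I expect to be most delicate is the compatibility in the third paragraph: besides knowing that $\cD_{X_n/W_n}$ is the increasing union of the $\cD^{(m)}_{X_n/W_n}$, one needs that Berthelot's level-raising equivalences form a tower compatible with the inclusions $\cD^{(m)} \subseteq \cD^{(m+1)}$, so that the level-wise descents glue to a single descent — this is precisely where the interplay between the continuous $\wh{\cD}$-action on the infinitesimal site (which, unlike on the crystalline site, carries no quasi-nilpotence constraint) and the finite-level arithmetic $\cD^{(m)}$-modules must be used. The remaining ingredients — Zariski descent, reduction modulo $p^n$, and the categorical bookkeeping — are routine.
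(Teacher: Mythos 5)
Your opening reduction --- Zariski descent to an affine piece where $X$ lifts to a smooth formal scheme $X_W$ with a Frobenius lift $F_W$, and the translation via \eqref{1.7inf}/\eqref{1.8inf} into a statement about pullback of coherent modules over rings of differential operators --- is exactly the paper's first step. You diverge at the final step: the paper simply quotes \cite[Thm.~2.1]{berthelotdivided}, which says precisely that $F_W^*$ is an equivalence on $\cO_{X_W}$-coherent left $\wh{\cD}_{X_W/W}$-modules in mixed characteristic, whereas you re-derive this modulo $p^n$ by writing $\cD_{X_n/W_n}=\varinjlim_m\cD^{(m)}_{X_n/W_n}$ and gluing Berthelot's level-raising Frobenius descent equivalences from \cite{berthelotII} (for the finite-level statement over $W_n$ the reference is Thm.~2.3.6 there; Thm.~4.1.3 is the $p$-adically completed version). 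Your route is viable, and in substance it amounts to reproving the quoted theorem of \cite{berthelotdivided}: the compatibilities you flag as delicate --- descent at level $m$ versus restriction along $\cD^{(m)}\subseteq\cD^{(m+1)}$, and both versus the naive pullback $F_W^*$ --- are available in \cite{berthelotII} and are the same compatibilities the paper invokes (in completed, isogeny form) in Remark~\ref{rmk:Fdivconv}, so nothing breaks, but writing them out carefully is the actual content that the citation to \cite{berthelotdivided} buys for free. Two small corrections: the transition maps $\cD^{(m)}_{X_n/W_n}\to\cD^{(m+1)}_{X_n/W_n}$ are \emph{not} injective once $p$ is nilpotent (a basis element $\partial^{\langle k\rangle_{(m)}}$ maps to a multiple of $\partial^{\langle k\rangle_{(m+1)}}$ that may be divisible by $p$); only the colimit identification $\varinjlim_m\cD^{(m)}_{X_n/W_n}\cong\cD_{X_n/W_n}$, valid for $X_n$ smooth over $W_n$, is needed, and your argument uses nothing more. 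Also, $\cO$-coherence of the descended module should be justified by the finiteness and faithful flatness of $F$ (equivalently of $F_W$ mod $p^n$). Comparing the two approaches: the paper's citation is shorter and applies directly to the formal scheme over $W$ without the passage through $\varprojlim_n{\rm Inf}(X/W_n)$, while your argument is more self-contained granted only \cite{berthelotII}, at the price of the level-by-level bookkeeping.
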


\begin{proof}
Because the category ${\rm Inf}(X/W)$ satisfies the Zariski 
descent property, we may work locally. 
So we may assume that $X$ lifts to a $p$-adic smooth formal 
scheme $X_W$ over $W$ on which there exists a lift 
$F_W:X_W \lra X_W$ of Frobenius morphism on $X$. 
Then we have the equivalence \eqref{1.8inf} 
in which the functor $F^*$ on the left hand side is compatible 
with the pull-back $F^*_W$ by $F_W$ on the right hand side. 
Thus it suffices to see that $F_W^*$ is an equivalence, which 
is proven in \cite[Thm.~2.1]{berthelotdivided}. 
\end{proof}

\section{Proof of Theorem \ref{thm2}}  \label{sec:proofthm2}

In this section, we prove Theorem \ref{thm2}. 
The following proposition, which is a crystalline version of 
Langton's theorem \cite{langton}, is a key step for the proof: 

\begin{prop}\label{prop:langton}
Let $X$ be a smooth projective  variety over $k$ and 
let $\cE \in \Crys(X/W)_{\Q}$ be irreducible.
Then there exists   $E \in  \Crys (X/W)$  with 
$\cE = \Q \otimes E$ such that $E_X \in \Crys(X/k) ={\MIC}(X)^{\qn}$ 
is $\mu$-semistable. 
\end{prop}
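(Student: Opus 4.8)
The statement is Langton's theorem \cite{langton} transported to the category $\Crys(X/W)$, with the irreducibility of $\cE$ playing the r\^ole that semistability of the restriction to the generic point of the base plays in Langton's setting. The plan is to start from an arbitrary lattice of $\cE$ and run a descent by \emph{elementary modifications} until the value becomes $\mu$-semistable. Write $E \mapsto E_X$ for the right exact evaluation functor $\Crys(X/W) \lra \Crys(X/k) = \MIC(X)^{\qn}$ applied to $E \bmod p$. Given a lattice $E$ of $\cE$ and a subobject $S \subseteq E_X$ in $\Crys(X/k)$, the composite $E \twoheadrightarrow E/pE = E_X \twoheadrightarrow E_X/S$ is an epimorphism in $\Crys(X/W)$ onto a $k$-crystal, so $E\langle S\rangle := \Ker(E \to E_X/S)$ satisfies $pE \subseteq E\langle S\rangle \subseteq E$; hence $E\langle S\rangle$ is again a $p$-torsion free lattice of $\cE$ (since $\Q \otimes (pE) = \Q \otimes E = \cE$), and the snake lemma applied to multiplication by $p$ on $0 \to E\langle S\rangle \to E \to E_X/S \to 0$ yields $0 \to E_X/S \to E\langle S\rangle_X \to S \to 0$ in $\Crys(X/k)$, so the modification ``exchanges'' $S$ with $E_X/S$ inside the value. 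Note that the $\cO_X$-torsion subsheaf of any $E_X$ is a subcrystal, since an integrable connection preserves $\cO_X$-torsion.

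For the descent I would use the Harder--Narasimhan formalism for the abelian category $\MIC(X)^{\qn}$ with the slope $\mu$ of the underlying $\cO_X$-module: every torsion free object admits a canonical filtration by subcrystals whose subquotients are $\mu$-semistable as crystals and of strictly decreasing slopes, the first step being the maximal destabilising subcrystal; the relevant existence statements for coherent sheaves with an integrable connection in characteristic $p$ are provided by Langer \cite{langersfund} (in the spirit of Theorem~\ref{thm:langer}). After a preliminary sequence of modifications of the same type (modifying by the torsion subcrystal), which I suppress, one may assume $E_X$ torsion free, so that $\mu(E_X) = 0$ by Proposition~\ref{prop:chern}; then the maximal slope $\mu_{\max}(E_X)$ of a subcrystal of $E_X$ is $\ge 0$ and lies in the discrete set $\tfrac{1}{r!}\Z$, where $r = \mathrm{rk}\,\cE$. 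If $E_X$ is not $\mu$-semistable as a crystal, let $B \subseteq E_X$ be its maximal destabilising subcrystal and pass to $E\langle B\rangle$; from $0 \to E_X/B \to E\langle B\rangle_X \to B \to 0$ one sees $E\langle B\rangle_X$ is again torsion free (its torsion would inject into the torsion free $B$) and $\mu_{\max}(E\langle B\rangle_X) \le \mu_{\max}(E_X)$, with equality forcing the rank of the first Harder--Narasimhan step to drop. Iterating produces lattices $E = E_0 \supseteq E_1 \supseteq \cdots$, all of $\cE$, along which the pair consisting of $\mu_{\max}((E_n)_X)$ and the rank of the first Harder--Narasimhan step of $(E_n)_X$ is non-increasing for the lexicographic order and bounded below; if it ever reaches $(0,0)$ we are done.

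It remains to rule out that the descent runs forever, i.e. that the invariant stabilises at a value with $\mu_{\max} > 0$. Suppose it does. Then for $n \gg 0$ the first Harder--Narasimhan step $B_n \subseteq (E_n)_X$ has a fixed slope $\mu_0 > 0$ and a fixed rank $\rho$, necessarily with $\rho < r$ (if $\rho = r$ then $(E_n)_X = B_n$ is $\mu$-semistable). Following Langton's numerical argument, now carried out inside $\Crys(X/W)$: $E_{n+1}$ is exactly the preimage of $B_n$ under $E_n \to (E_n)_X$; from $0 \to (E_n)_X/B_n \to (E_{n+1})_X \to B_n \to 0$ and a slope--rank count one gets $B_{n+1} \cap \bigl((E_n)_X/B_n\bigr) = 0$, hence compatible embeddings $B_{n+1} \hookrightarrow B_n$, and these data are $p$-adically convergent, producing a subcrystal $B_\infty$ of some $E_{n_0}$. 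Then $\Q \otimes B_\infty$ is a subobject of $\cE$ of rank $\rho$ with $0 < \rho < r$, i.e. a nonzero proper subisocrystal, contradicting the irreducibility of $\cE$. Hence the descent terminates at a lattice $E$ with $E_X$ torsion free and $\mu$-semistable as a crystal. I expect the hard part to be precisely this last step: transporting Langton's convergence estimate to $\Crys(X/W)$, i.e. constructing $B_\infty$ and checking that it is nonzero and of the expected rank, together with verifying that the formalism of \cite{langersfund} for sheaves with connection in characteristic $p$ supplies the Harder--Narasimhan theory used above; the remaining bookkeeping of elementary modifications is routine, as in \cite{langton}.
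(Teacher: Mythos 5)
Your strategy is the one the paper uses (iterated elementary modifications along the maximal destabilizing subobject of $E^n_X$ in $\Crys(X/k)$, with $\mu(E^n_X)=0$ from Proposition~\ref{prop:chern}, and a Langton-type limit producing a proper nonzero sub- or quotient isocrystal of $\cE$ that contradicts irreducibility), but the decisive step is missing. You assert that the embeddings $B_{n+1}\hookrightarrow B_n$ are ``$p$-adically convergent, producing a subcrystal $B_\infty$ of some $E_{n_0}$'' and yourself call this ``the hard part''; that is exactly the content of the proposition, and no construction is given. Note that the $B_n$ live in different reductions $(E_n)_X$, and the naive candidate $\bigcap_n E^n$ is not visibly a crystal of finite presentation, so ``convergence'' has to be made into an actual object of $\Crys(X/W)$. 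The paper does this on the quotient side, which is what makes the limit painless: after showing $C^n:=B^{n+1}\cap G^n=0$ for $n\gg 0$ (where $G^n=E^n_X/B^n$), it deduces inclusions $B^{n+1}\subseteq B^n$ and $G^n\subseteq G^{n+1}$, proves both towers stabilize to $B$, $G$, so the sequences $0\to G\to E^{n+1}_X\to B\to 0$ split; the splitting yields that $Q_n:=E/E^n$ is flat over $W_n$, hence $(Q_n)_n$ defines via the equivalence \eqref{proj} a nonzero $p$-torsion free crystal $Q$ together with a surjection $E\to Q$ of rank $\rk G<\rk\cE$, and $\cE\to\Q\otimes Q$ contradicts irreducibility. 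Your $B_\infty$ would morally be the kernel of this surjection, but without the stabilization of $B_n,G_n$, the splitting, and the flatness statement, none of the limit exists; the proposal stops where the proof begins.

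Two further points. First, your claim that equality $\mu_{\max}(E\langle B\rangle_X)=\mu_{\max}(E_X)$ forces the rank of the first Harder--Narasimhan step to drop is false: if it were true, the lexicographic invariant would strictly decrease and the descent would terminate unconditionally, i.e.\ Langton's theorem would need no hypothesis on the generic object at all, which already fails classically for an unstable generic fibre. (You do not use the claim later, but it should be deleted; monotone invariants alone cannot finish the argument, which is why the limit construction is needed.) Second, the reduction to $E_X$ torsion free, which you ``suppress'', is not routine: the torsion subcrystals $B^n$ embed successively into $B^0$, but a descending chain of coherent subsheaves need not stabilize, and the paper proves this reduction (its claim (A)) by the very same limit construction, either contradicting irreducibility or taking the limit crystal $Q$ itself as the lattice with torsion free value. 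By contrast, the Harder--Narasimhan input you request from \cite{langersfund} is not an issue: subobjects of $E_X$ in $\Crys(X/k)$ are in particular subsheaves, so their slopes are bounded above and lie in $\frac{1}{r!}\Z$, and the maximal destabilizing subobject exists by a standard argument; the paper only ever uses this first step of the filtration.
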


\begin{proof}
We follow the proof of Langer \cite[Thm.~5.1]{langerlie} and Huybrechts-Lehn's book 
\cite[2.B]{hl}. Let us consider the following two claims: 
\begin{itemize}
 \itemsep0em 
\item[(A)]  There exists  $E \in \Crys(X/W)$   with 
$\cE = \Q \otimes E$ such that $E_X \in \Coh(X)$ is torsion free.  
\item[(B)]  There exists 
$E \in \Crys(X/W)$   with 
$\cE = \Q \otimes E$ such that $E_X \in \Crys(X/k)={\MIC}(X)^{\qn}$
is $\mu$-semistable.
\end{itemize}
To prove the proposition, we first prove the claim (A) and then 
prove the claim (B). However, since the proof of (A) and that of (B) 
are parallel, we will describe them simultaneously in the following. \par 
First take a $p$-torsion free crystal $E \in \Crys(X/W)$ with 
$\cE = \Q \otimes E$ in the case (A), and take 
a $p$-torsion free crystal $E \in \Crys(X/W)$  with 
$\cE = \Q \otimes E$ 
and $E_X$ torsion free in the case (B). (This is possible because, when 
we prove (B), we can assume the claim (A).) 
Put $E^0 := E$. If $E^0$ does not 
satisfy the conclusion of the claim, let 
$B^0$ be the maximal torsion $\cO_X$-submodule of $E^0_X$ in the case (A) and 
the maximal destabilizing subobject of $E^0_X$ in the category $\Crys(X/k)$ in 
the case (B). In the case (A), one can check 
(by looking at $E^0_X$ as an object $(E^0_X,\nabla)$ in $\MIC(X)$ and noting 
the fact that $fe = 0$ \, $(e \in E^0_X, f \in \cO_X)$ 
implies $f^2\nabla(e) = 0$) that 
$B^0$ is actually an object in $\Crys(X/k)$. 
Let $E^1$ be the kernel of $E^0 \lra E^0_X \lra E^0_X/B^0$. 
If $E^1$ satisfies the conclusion of the claim, we are done. 
Otherwise, let 
$B^1$ be the maximal torsion $\cO_X$-submodule 
(actually an object in $\Crys(X/k)$) of $E^1_X$ in the case (A) and 
the maximal destabilizing subobject of $E^1_X$ 
in the category $\Crys(X/k)$ in the case (B). 
If the claim is not true, we obtain a sequence 
$$ E = E^0 \supset E^1 \supset E^2 \supset \cdots. $$ 

Let $G^n := E^n_X/B^n=E^n/E^{n+1}$.  Note that 
in the case (A), the rank of $G^n$ is the same as the rank of $E^n_X$, which is the same as the rank of $\cE$. In addition, as $B^n\subset E^n_X$ is the maximal torsion submodule, $G^n$ is torsion free in $\Coh(X)$. In the case (B),
$G^n$ is nonzero  by definition of $B^n$, and torsion free by the maximality of $B^n$. 
By definition, one has exact sequences $0\to E^{n+1}\to E^{n}\to G^n\to 0$
and $0\to pE^n/pE^{n+1} \to E_1^{n+1} \to E^{n+1}/pE^n \to 0, $
both in $\Crys(X/W)$. As  $pE^n/pE^{n+1}\cong G^n$ and $E^{n+1}/pE^n=B^n$ in $\Crys(X/W)$, this yields the exact sequences 
\begin{equation}\label{eq:l1}
0 \lra B^n \lra E^n_X \lra G^n \lra 0, \,\,\,\, 
0 \lra G^n \lra E^{n+1}_X \lra B^n \lra 0 
\end{equation}
in $\Crys(X/k)$. 
From these, we see that the slope $\mu(E^n_X)$ of $E^n_X$ is constant and so 
equal to $\mu(E_X)$ in the case (B). 

Let $C^n$ be the kernel of the composite 
$B^{n+1} \ra E^{n+1}_X \ra B^n$. It is 
nothing but $B^{n+1} \cap G^n$, and this is zero in the case (A) 
because $B^{n+1}$ is torsion while $G^n$ is torsion free. 
In the case (B), 
if $C^n = 0$, $\mu(B^{n+1}) \leq \mu(B^n)$ due to the maximality of 
$B^n$. 
If $C^n \not= 0$, 
$\mu(C^n) \leq \mu_{\max}(G^n) < \mu(B^n)$ because $C^n$ is a subobject 
of $G^n$ and $B^n$ is the maximal destabilizing subobject. 
So, if $\mu(B^{n+1}) \leq \mu(C^n)$, we obtain the inequality 
$\mu(B^{n+1}) < \mu(B^n)$. On the other hand, if 
$\mu(B^{n+1}) > \mu(C^n)$, we have 
$\mu(B^{n+1}) < \mu(B^{n+1}/C^n) \leq \mu(B^n)$ because $B^n$ is 
$\mu$-semistable as  a crystal. Hence $\mu(B^{n+1}) < \mu(B^n)$ when 
$C^n \not= 0$. In conclusion, 
$\mu(B^n) \,(n \in \N)$ is non-increasing, and strictly decreasing when 
$C^n \not= 0$. But the latter case can happen only finitely many times, 
because $\mu(B^{n})$ should be contained in $\frac{1}{r!}\Z$ (where 
$r$ is the rank of $E$) and $> \mu(E_1)$. Therefore, 
$C^n = 0$ for $n \gg 0$ in the case (B). 

So we may assume that $C^n = 0$, namely, $B^{n+1} \cap G^n = 0$. 
This implies that we have the inclusions 
\begin{equation}\label{eq:l2}
\cdots \supseteq B^n \supseteq B^{n+1} \supseteq \cdots, \qquad 
\cdots \subseteq G^n \subseteq G^{n+1} \subseteq \cdots. 
\end{equation}
We may assume also that the rank of $G^n$ is constant and that 
$\mu(B^n) \,(n \in \N), \mu(G^n) \, \allowbreak (n \in \N)$ are constant in 
the case (B).  Note also that $G^n = G^{n+1}$ if and only if $B^{n} = B^{n+1}$. 

Next we prove that $G^n$ is constant for $n \gg 0$. 
In the case (A), the support of $B^n$ is non-increasing and so it is 
constant for $n \gg 0$. So, for $n \gg 0$, 
$B^n = B^{n+1}$ outside some codimension $2$ closed subscheme of $X$.
 Indeed,  if the support of the $B_n$ for $n$ large is in codimension $\ge 2$, there is nothing to prove, else it is a divisor, and $B_n$ on each generic point of the divisor is eventually constant. So $G^n = G^{n+1}$ outside a codimension $2$ closed subscheme. 
Hence the double dual of $G^n$ is constant and, as $G^n$ is torsion free, contains all the $G^n$. 
So the right tower   in \eqref{eq:l2} is stationary and then 
$G^n$ is constant for $n \gg 0$. 
In the case (B), the constancy of the rank and the slope and the torsion freeness of 
$G^n$ imply the equality $G^n = G^{n+1}.$ 

So we may assume that $B^n, G^n$ are constant. So we write it by 
$B, G$, respectively. Then the exact sequences \eqref{eq:l1} 
split, and so $E^n_X = B \oplus G$. Now define $Q_n := E/E^n$. 
Then $Q$ has a natural filtration whose graded quotients are 
$E^i/E^{i+1} \cong G$. This implies that $Q_n$ is nonzero and when regarded as an object in 
$\Crys(X/W_n)$, it is 
flat over $W_n$. So 
$Q = (Q_n)_n \in \Crys(X/W)$ is a nonzero $p$-torsion free crystal. 
Also, we have the canonical surjection $E \lra Q$, hence the surjection 
$\cE \lra \Q \otimes Q$. In the case (A), if it is not an isomorphism, 
this contradicts the irreducibility of $\cE$. If it is an isomorphism, 
$Q$ gives the lattice such that $Q_X = Q_1 = G$ is torsion free. 
In the case (B), since 
$B$ is non-zero and torsion free, $\cE \lra \Q \otimes Q$ 
is not an isomorphism, and this contradicts the irreducibility of $\cE$. 
This  finishes the proof.
\end{proof}

\begin{prop}\label{prop:mr}
Let $X$ be a  smooth projective variety over $k$ and 
let $G\in \Crys(X/k)$ be  of rank $r$
and $\mu$-semistable. 
Assume moreover  one of the following conditions:
\begin{itemize}
\itemsep0em 
\item[(1)] $r=1$. 
\item[(2)]  $\mu_{\max}(\Omega^1_X) < 2$, $r=2$ and $\mu(G) = 0$. 
\item[(3)]  $\mu_{\max}(\Omega^1_X) < 1$, $r=3$ and $\mu(G) = 0$. 
\item[(4)]  $\mu_{\max}(\Omega^1_X) < \dfrac{1}{N(r)}$, where 
$N(r) := \displaystyle\max_{a, b \geq 1, a+b \leq r} {\rm lcm}(a,b)$. 
\end{itemize}
Then $G$ is strongly $\mu$-semistable in $\Coh(X)$.
\end{prop}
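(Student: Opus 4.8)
The plan is to reduce the statement to a known criterion of Langer ensuring that $\mu$-semistability is preserved under Frobenius pullback, by bounding the ``obstruction'' to strong $\mu$-semistability via the slopes of $\Omega^1_X$. Recall that, by Langer's work on the Bogomolov inequality and the behaviour of the Harder--Narasimhan filtration under Frobenius (see \cite{langersfund}), if $G\in\Crys(X/k)=\MIC(X)^{\qn}$ is $\mu$-semistable as a crystal but $(F^*)^nG$ is not $\mu$-semistable for some $n$, then the maximal destabilizing subsheaf $H\subset (F^*)^nG$ (as a coherent sheaf, ignoring the connection) has a slope gap $\mu_{\max}((F^*)^nG)-\mu_{\min}((F^*)^nG)$ which is controlled: the $p$-curvature (or rather the Kodaira--Spencer-type map coming from $\nabla$) gives a nonzero $\cO_X$-linear map $H\to ((F^*)^nG/H)\otimes\Omega^1_X$, so that $\mu_{\min}(H)\le\mu_{\max}((F^*)^nG/H)+\mu_{\max}(\Omega^1_X)$. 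Combined with the fact that $G$ is $\mu$-semistable \emph{as a crystal} (so the connection-stable subobjects of $G$, hence of each $(F^*)^nG$, all have slope $\le\mu(G)$), this forces the HN slopes of $(F^*)^nG$ to lie in an interval of length at most $(r-1)\cdot\mu_{\max}(\Omega^1_X)$, or more precisely an interval whose length is bounded in terms of $\mu_{\max}(\Omega^1_X)$ and the possible ranks of HN-pieces.

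First I would set up the case $r=1$ trivially: a rank-$1$ torsion-free sheaf on $X$ is automatically $\mu$-semistable and remains so after any $F^*$, so (1) is immediate. For the remaining cases I would argue by contradiction: suppose $n$ is minimal with $(F^*)^nG$ not $\mu$-semistable, and write $H$ for its maximal destabilizing subsheaf, of rank $a$ with $1\le a\le r-1$, and set $b:=r-a$. Since $(F^*)^{n-1}G$ is $\mu$-semistable, the canonical connection on $(F^*)^nG$ (via \eqref{eqn:can}) does not preserve $H$, so $\nabla$ induces a nonzero $\cO_X$-linear map $H\to\big((F^*)^nG/H\big)\otimes\Omega^1_X$. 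Denoting by $H'$ the image of a nonzero HN-graded piece of $H$ and by $H''$ a quotient landing in a HN-graded piece of $(F^*)^nG/H$, one gets $\mu(H')\le\mu(H'')+\mu_{\max}(\Omega^1_X)$. Using that all HN-slopes of $(F^*)^nG$ are determined by integers divided by the ranks of the pieces (which are $\le a$ on the $H$-side and $\le b$ on the other), the strict inequality $\mu(H)>\mu((F^*)^nG/H)$ combined with $\mu((F^*)^nG)=\mu(G)$ (Frobenius multiplies degree by $p$ and one normalizes; here $\mu(G)=0$ in cases (2),(3),(4) so $\mu((F^*)^nG)=0$) forces the gap between these rational numbers to be at least $1/\mathrm{lcm}(a,b)\ge 1/N(r)$, contradicting $\mu_{\max}(\Omega^1_X)<1/N(r)$.

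For cases (2) and (3) the bound $N(r)$ would be too crude, so I would instead track the shape of the HN filtration more carefully. When $r=2$ the only destabilizing configuration has $a=b=1$, and the two slopes are integers summing to $0$ (since $\mu(G)=0$ and degrees are integers), hence differ by at least $2$; the map $H\to\big((F^*)^nG/H\big)\otimes\Omega^1_X$ then gives $2\le\mu_{\max}(\Omega^1_X)$, contradicting $\mu_{\max}(\Omega^1_X)<2$. When $r=3$ the worst case has $a=1,b=2$ or $a=2,b=1$; here the HN-pieces have ranks among $\{1,2\}$ and one analyzes the chain of slope inequalities coming from the induced maps between successive HN-graded pieces tensored with $\Omega^1_X$ --- each step costs at most $\mu_{\max}(\Omega^1_X)$ --- so that the total slope gap is at most $2\mu_{\max}(\Omega^1_X)$, while integrality forces it to be at least $1$; hence $\mu_{\max}(\Omega^1_X)<1$ is contradicted only if the gap were $<1$, giving the claim. (One must also handle the sub-case where the destabilizing subsheaf itself is not semistable, by passing to its own HN pieces, but the rank bookkeeping is the same.)

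The main obstacle, I expect, is making the ``slope-gap is controlled by $\mu_{\max}(\Omega^1_X)$'' step fully rigorous: one needs that a nonzero $\cO_X$-linear map $\phi\colon A\to B\otimes\Omega^1_X$ between $\mu$-semistable sheaves forces $\mu(A)\le\mu(B)+\mu_{\max}(\Omega^1_X)$, which requires knowing $\mu_{\min}(B\otimes\Omega^1_X)\ge\mu(B)+\mu_{\min}(\Omega^1_X)$ and more usefully $\mu_{\max}(B\otimes\Omega^1_X)\le\mu(B)+\mu_{\max}(\Omega^1_X)$ --- valid because tensoring by a fixed bundle shifts the HN filtration in a controlled way --- together with the fact that the composite $A\twoheadrightarrow\mathrm{image}\hookrightarrow B\otimes\Omega^1_X$ relates the minimal slope of $A$ and the maximal slope of $B\otimes\Omega^1_X$. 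Assembling these inequalities through the (possibly long) HN filtration of $(F^*)^nG$, and matching the combinatorial bound against the arithmetic constraint that all slopes lie in $\tfrac{1}{N(r)}\Z$ (after the normalization $\mu(G)=0$), is the technical heart; everything else is bookkeeping on ranks $a+b\le r$ and invoking \eqref{eqn:can} to guarantee the destabilizing subsheaf is not horizontal.
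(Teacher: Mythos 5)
Your overall strategy (contradiction via Frobenius pullbacks, Cartier descent to force a nonzero $\cO_X$-linear map from the destabilizing subsheaf into the quotient tensored with $\Omega^1_X$, then a slope-gap versus $\mu_{\max}(\Omega^1_X)$ comparison with an $1/{\rm lcm}(a,b)$ integrality bound) is the same family of argument as the paper's, which follows Mehta--Ramanathan. But the step you yourself single out as ``the technical heart'' is a genuine gap, not a routine verification: you need $\mu_{\max}(B\otimes\Omega^1_X)\le\mu(B)+\mu_{\max}(\Omega^1_X)$ (equivalently, subadditivity of $\mu_{\max}$ under tensor product) for $B$ a merely $\mu$-semistable HN piece. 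In characteristic $p$ this is false in general: tensor products of $\mu$-semistable sheaves need not be $\mu$-semistable, so ``tensoring by a fixed bundle shifts the HN filtration in a controlled way'' is exactly what fails, and Langer's general bounds only give it up to error terms in $\mu_{\max}(\Omega^1_X)/p$, which would ruin the clean constants in (2)--(4). The paper is structured precisely to avoid this point: the estimate is phrased as the vanishing of a map $f\colon T_X\to{\cal H}om(H',H'')$, using $\mu_{\min}(T_X)=-\mu_{\max}(\Omega^1_X)$, so that $\Omega^1_X$ never gets tensored with a higher-rank semistable sheaf; then in cases (2),(3) one of $H',H''$ has rank $1$, so ${H'}^{\vee}\otimes H''$ is automatically $\mu$-semistable, while in case (4) the argument is an induction on the rank which first proves the sheaf-level claim for smaller ranks, so that $H',H''$ are \emph{strongly} $\mu$-semistable, and only then invokes Ramanan--Ramanathan (\cite{rr}, Thm.~3.23) to get $\mu$-semistability of ${H'}^{\vee}\otimes H''$ of slope $\le -1/{\rm lcm}({\rm rank}\,H',{\rm rank}\,H'')\le -1/N(r)$. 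Without this induction-plus-strong-semistability input (or an equivalent substitute), your slope inequality is unproved, and with it the whole numerical content of (3) and (4).

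Two secondary points. First, your ``minimal $n$'' set-up presupposes $n\ge 1$ (``since $(F^*)^{n-1}G$ is $\mu$-semistable''), but the hypothesis is only $\mu$-semistability of $G$ \emph{as a crystal}, which is weaker than sheaf semistability; the base case $n=0$ must be handled with the crystal's own connection $\nabla$ (if $\ol{\nabla}$ vanishes on the destabilizing subsheaf, that subsheaf is a destabilizing subcrystal, contradicting the hypothesis), which is exactly the paper's second stage and is only gestured at in your first paragraph. Second, your rank-$3$ bookkeeping is off: ``each step costs at most $\mu_{\max}(\Omega^1_X)$, total gap at most $2\mu_{\max}(\Omega^1_X)$, while integrality forces at least $1$'' does not produce a contradiction from $\mu_{\max}(\Omega^1_X)<1$ (since $2\mu_{\max}(\Omega^1_X)<2$ is compatible with a gap $\ge1$). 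The correct accounting, as in the paper, uses a single nonzero map from the maximal destabilizing subsheaf to one HN-graded piece of the quotient: with $\mu(G)=0$, integrality of degrees, and the fact that one of the two pieces has rank $1$, the slope drop is $\le -2$ (rank $2$) resp.\ $\le -1$ (rank $3$), which is then compared once against $-\mu_{\max}(\Omega^1_X)=\mu_{\min}(T_X)$.
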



\begin{proof}
In the case (1), the only point of the assertion is that the Frobenius pull backs of $G$ remain torsion free, which is trivial because torsion freeness of 
$\cO_X$-modules is preserved by $F^*$ 
as $X$ is smooth so $F^*$ is faithfully flat. So we will prove 
the proposition in the cases (2), (3) or (4). 
The proof is a variant of that in \cite[Thm.~2.1]{mr}. 

First we prove the claim that any $\mu$-semistable sheaf $H$ of rank $r$ is 
strongly $\mu$-semistable in $\Coh(X)$ under one of the following conditions: 
\begin{itemize}
\itemsep0em 
\item[(a)]  $\mu_{\max}(\Omega^1_X) < 2$, $r=2$ and $\mu(H) = 0$. 
\item[(b)]  $\mu_{\max}(\Omega^1_X) < 1$, $r=3$ and $\mu(H) = 0$. 
\item[(c)]  $\mu_{\max}(\Omega^1_X) < \dfrac{1}{N(r)}$. 
In this case, we prove it by induction on $r$.
\end{itemize}

For this, it suffices to prove that $F^*H$ is $\mu$-semistable in $\Coh(X)$. 
Assume the contrary and let $H' \subset F^*H$ be the 
maximal destablizing subsheaf of $H$. Let 
$H'' := H/H'$. Then the connection
$\nabla_{\rm can}: F^*H \lra F^*H \otimes \Omega^1_X$ in \eqref{cartierd}
induces a linear map 
$\ol{\nabla}_{\rm can}: H' \lra H'' \otimes \Omega^1_X$. 
If we prove $\ol{\nabla}_{\rm can} = 0$, 
$(H', \nabla_{\rm can}|_{H'})$ defines a submodule with 
integrable connection of $(F^*H, \nabla_{\rm can})$ and so 
there exists a $\cO_X$-submodule $H'_0$ of $H$ with 
$H' = F^*H'_0$. Then we have 
$p \mu(H'_0) = \mu(H') > \mu(F^*H) = p\mu(H)$ and this 
contradicts the $\mu$-semistability of $H$. So 
it suffices to prove the equality $\ol{\nabla}_{\rm can} = 0$. 
To prove it, we may replace 
$H''$ by its graded quotients with respect to 
Harder-Narasimhan filtration. So we may assume that 
$H', H''$ are $\mu$-semistable and $\mu(H') > \mu(H'')$. 
Also, it suffices to prove that the map 
$$ f: T_X \lra {\cal H}om(H',H'') $$
(where $T_X$ denotes the tangent sheaf on $X$)
induced by $\ol{\nabla}_{\rm can}$ is equal to zero. 
Since $T_X$ is locally free and ${\cal H}om(H',H'')$ is torsion free as $H''$ is, 
it suffices to prove $f=0$ outside some codimension $2$ closed subscheme 
of $X$. Until the end of the proof of the claim, we will consider 
sheaves and morphisms of sheaves up to some codimension $2$ subscheme 
in $X$. Then ${\cal H}om(H',H'') \cong {H'}^{\vee} \otimes H''$. 
When (a) or (b) is satisfied, at least one of $H', H''$ is 
of rank $1$. So ${H'}^{\vee} \otimes H''$ is $\mu$-semistable 
of slope $- \mu(H') + \mu(H'')$, which is $\leq -2$ in 
the case (a) and $\leq -1$ in the case (b) (we use the assumption 
$\mu(H) = 0$ here). 
Hence $- \mu(H') + \mu(H'') < -\mu_{\max}(\Omega^1_X) = \mu_{\min}(T_X)$. 
So we see that $f=0$.  
%

When (c) is satisfied, $H', H''$ are strongly $\mu$-semistable 
by induction hypothesis. Then, 
by \cite[Thm.~3.23]{rr} 
(see also \cite[Cor.~A.3.1]{langermixed}), 
${H'}^{\vee} \otimes H''$ 
is $\mu$-semistable of slope $- \mu(H') + \mu(H'') \leq 
\dfrac{-1}{{\rm lcm}({\rm rank} H', {\rm rank} H'')} \leq -\dfrac{1}{N(r)} 
< -\mu_{\max}(\Omega^1_X) = \mu_{\min}(T_X)$. 
So we see that $f=0$ also in this case. 

Now we prove the proposition. 
In the proof, we regard $G$ as an object in $\MIC(X)^{\qn}$ and so 
we denote it by $(G,\nabla)$. By the argument above, 
it suffices to prove that 
$G$ is $\mu$-semistable as sheaf. 
Assume the contrary and let $H' \subset G$ be the 
maximal destablizing subsheaf of $G$. Let 
$H'' := G/H'$. Then the connection 
$\nabla: G \lra G \otimes \Omega^1_X$ induces a linear map 
$\ol{\nabla}: H \lra H' \otimes \Omega^1_X$. 
It suffices to prove that $\ol{\nabla} = 0$: Indeed, if this is 
the case, $(H, \nabla|_H)$ defines a destabilizing 
subobject of $(G, \nabla)$, which is a contradiction. 

We prove that $\ol{\nabla} = 0$ in a similar way to 
the proof of $\ol{\nabla}_{\rm can} = 0$ above. 
We replace 
$H''$ by its graded quotients with respect to 
Harder-Narasimhan filtration so that 
$H', H''$ are $\mu$-semistable and $\mu(H') > \mu(H'')$, and 
we prove that the map 
$$ f: T_X \lra {\cal H}om(H',H'') $$
induced by $\ol{\nabla}$ is equal to zero 
outside some codimension $2$ closed subscheme 
of $X$. Working again up to some codimension $2$ subscheme 
in $X$, we have ${\cal H}om(H',H'') \cong {H'}^{\vee} \otimes H''$. 
When (1) or (2) is satisfied, at least one of $H', H''$ is 
of rank $1$. So ${H'}^{\vee} \otimes H''$ is $\mu$-semistable 
of slope $- \mu(H') + \mu(H'')$, which is $\leq -2$ in 
the case (1) and $\leq -1$ in the case (2) (we use the assumption 
$\mu(G) = 0$ here). In the case (3), $H', H''$ are strongly $\mu$-semistable 
by the claim we proved above. Then, 
${H'}^{\vee} \otimes H''$ 
is $\mu$-semistable of slope $- \mu(H') + \mu(H'') \leq 
\dfrac{-1}{{\rm lcm}({\rm rank} H', {\rm rank} H'')} \leq -\dfrac{1}{N(r)} 
< -\mu_{\max}(\Omega^1_X) = \mu_{\min}(T_X)$. 
So we see that $f=0$ also in this case.

\end{proof}

Now we give a proof of Theorem \ref{thm2}: 

\begin{proof}[Proof of Theorem \ref{thm2}] 

First assume that $\cE$ is irreducible. In this case, 
any $F^n$-division $\cE^{(n)}$ of $\cE$ is also irreducible. 
Then, by Propositions \ref{prop:chern}, \ref{prop:langton} and \ref{prop:mr}, 
each $\cE^{(n)}$ admits a lattice $E^{(n)}$ such that 
$E^{(n)}_X$ is strongly $\mu$-semistable as $\cO_X$-module. 
So, by Theorem \ref{thm1}, we see that $\cE$ is constant. 

In the general case, $\cE$ has a filtration whose graded quotients are 
irreducible. So, by the previous case, $\cE$ can be written as 
an iterated extension of constant convergent isocrystals. 
Since we have $H^1_{\conv}(X/K, \cO_{X/K}) 
= \Q \otimes H^1_{\crys}(X/W, \cO_{X/W}) = 0,$ 
 where the second equality is proven in 
Proposition~\ref{rem:eo} (2), 
this finishes the proof. 
\end{proof}

We give another application of Proposition 
\ref{prop:langton}. It seems that  
the following question is frequently asked among 
experts: 
\begin{question}
Let $X$ be a smooth variety of finite type over $k$ and let 
$\cE \in \Conv(X/K)$. Does there exist 
a locally free $E\in \Crys(X/W)$  with 
$\cE = \Q \otimes E$? 
\end{question}

We give the following partial answer to this question, 
using Proposition~\ref{prop:langton}: 

\begin{thm}\label{thm:lf}
Let $X$ be a  smooth projective variety over $k$, let 
$\cE \in \Conv(X/K)$. Assume one of the 
following: 
\begin{itemize}
\itemsep0em 
\item[(1)] The rank of 
irreducible constituents of $\cE$ are $1$. 
\item[(2)] $\mu_{\max}(\Omega_X^1) < 2$ and the rank of 
irreducible constituents of $\cE$ are $\leq 2$. 
\item[(3)]  $\mu_{\max}(\Omega_X^1) < 1$ and the rank of 
irreducible constituents of $\cE$ are $\leq 3$. \\ 
\item[(4)]  $r \geq 4$, $\mu_{\max}(\Omega_X^1) < 
\dfrac{1}{N(r)}$ and the rank of 
irreducible constituents of $\cE$ are $\leq r$, where 
$N(r) := \displaystyle\max_{a, b \geq 1, a+b \leq r} {\rm lcm}(a,b)$. \\ 
\item[(5)]  $X$ lifts to a smooth scheme $\wt{X}$ over $W_2$ and the rank of 
irreducible constituents of $\cE$ are $\leq p$. 

Then there exists  $E \in \Crys(X/W)$   locally free with 
$\cE = \Q \otimes E$. 
\end{itemize}
\end{thm}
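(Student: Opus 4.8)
The plan is to reduce to the case where $\cE$ is irreducible and then to produce a locally free lattice out of the $\mu$-semistable lattice supplied by Proposition~\ref{prop:langton}, using Proposition~\ref{prop:chern} together with Langer's Theorem~\ref{thm:langer} in cases (1)--(4), and the inverse Cartier transform of Ogus--Vologodsky in case (5). For the reduction: if $0 \to \cE' \to \cE \to \cE'' \to 0$ is exact in $\Conv(X/K)$ with $\cE'$ and $\cE''$ nonzero, then each of $\cE',\cE''$ inherits the relevant hypothesis (the ranks of their irreducible constituents are bounded by those of $\cE$, and $N(r') \leq N(r)$ for $r' \leq r$ keeps the slope bounds valid; the lifting hypothesis in (5) is a condition on $X$). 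Arguing by induction on $\rk\cE$ exactly as in the proof of Proposition~\ref{prop:rk1lattice}, I would pick any lattice $E$ of $\cE$, obtaining an exact sequence $0 \to E' \to E \to E'' \to 0$ in $\Crys(X/W)$ with $E'$ a lattice of $\cE'$ and $E''$ a lattice of $\cE''$; by induction there are locally free lattices $E'_0$ of $\cE'$ and $E''_0$ of $\cE''$, which after multiplying by suitable powers of $p$ fit into injections $E''_0 \hra E''$ and $E' \hra E'_0$ in $\Crys(X/W)$; pulling the extension back along $E''_0 \hra E''$ and pushing it out along $E' \hra E'_0$ yields a locally free $E'''_0 \in \Crys(X/W)$ with $\Q \otimes E'''_0 = \cE$. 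So one may assume $\cE$ irreducible.

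Assume now $\cE$ irreducible. By Proposition~\ref{prop:langton} there is a lattice $E$ of $\cE$ such that $(E_X,\nabla) \in \Crys(X/k) = \MIC(X)^{\qn}$ is $\mu$-semistable as a crystal; in particular $E_X$ is torsion free, so by Proposition~\ref{prop:chern} its crystalline --- hence numerical --- Chern classes vanish and $\mu(E_X) = 0$. Since $E$ is locally free as soon as $E_X$ is, it suffices to prove $E_X$ locally free. In cases (1)--(4), the slope hypothesis on $\Omega^1_X$ together with $\mu(E_X) = 0$ is precisely the hypothesis of Proposition~\ref{prop:mr} (in the case matching $\rk\cE$), so $E_X$ is strongly $\mu$-semistable in $\Coh(X)$; having also vanishing Chern classes, Theorem~\ref{thm:langer} gives it a filtration whose graded quotients are locally free, and an iterated extension of locally free sheaves is locally free (a locally free quotient splits the extension locally). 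Hence $E_X$, and so $E$, is locally free.

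For case (5), $\rk\cE \leq p$ and $X$ lifts to a smooth $\wt X$ over $W_2$. With $E$ as above, $E_X$ is torsion free of rank $\leq p$, so $(E_X,\nabla) \in \MIC_{p-1}(X)$ and the inverse Cartier transform $C^{-1}\colon \HIG_{p-1}(X) \xrightarrow{\sim} \MIC_{p-1}(X)$ of \eqref{cartierf} is available; let $(H,\theta) \in \HIG_{p-1}(X)$ be such that $C^{-1}(H,\theta) \cong (E_X,\nabla)$. Filtering $(H,\theta)$ by the sub-Higgs-sheaves $\ker(\theta^i)$, whose graded pieces carry the zero Higgs field and on which $C^{-1}$ is Cartier descent $G \mapsto (F^*G,\nabla_{\rm can})$, one gets $[E_X] = [F^*H]$ in $K_0(X)$, and the same applied to any sub-Higgs-sheaf; this shows that $H$ is torsion free, $\mu(H) = 0$, the numerical Chern classes of $H$ vanish (as $F^*$ multiplies codimension-$i$ numerical cycles --- a torsion free group --- by $p^i$), and that $C^{-1}$ scales the slopes of subobjects uniformly by $p$. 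Since $C^{-1}$ is moreover an equivalence of abelian categories, and the $\mu$-semistability of $(E_X,\nabla)$ as a crystal coincides (because $\rk\leq p$) with its $\mu$-semistability in $\MIC_{p-1}(X)$, I would conclude that $(H,\theta)$ is $\mu$-semistable in $\HIG_{p-1}(X)$; then the Higgs (restricted-Lie-algebroid) analogue of Theorem~\ref{thm:langer} --- a $\mu$-semistable Higgs sheaf with vanishing Chern classes and nilpotent Higgs field of length $\leq p-1$ is an iterated extension of $\mu$-stable locally free Higgs sheaves --- gives $(H,\theta)$ a filtration by sub-Higgs-sheaves with locally free graded quotients, whence $H$, and therefore $E_X = C^{-1}(H,\theta)$, is locally free. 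The hard part will be precisely this case (5): justifying the formal properties of the inverse Cartier transform used above (preservation of torsion freeness and local freeness, compatibility with the Higgs filtration for the $K_0$ and Chern class computations, uniform scaling of slopes) and invoking the Higgs-module version of Langer's structure theorem; in cases (1)--(4) this difficulty is entirely absorbed into Proposition~\ref{prop:mr} and Theorem~\ref{thm:langer}.
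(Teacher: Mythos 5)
Your proposal is correct and follows essentially the route of the paper: reduce to irreducible $\cE$, then in cases (1)--(4) combine Propositions \ref{prop:chern}, \ref{prop:langton}, \ref{prop:mr} with Theorem \ref{thm:langer}, and in case (5) pass through the inverse Cartier transform. Two minor points of divergence. For the reduction, you splice locally free lattices of $\cE'$ and $\cE''$ by pullback/pushout of an extension of lattices, exactly as in the proof of Proposition \ref{prop:rk1lattice}; the paper instead produces the extension from the identification $H^1_{\conv}(X/K,{\cE''}^{\vee}\otimes\cE') = \Q\otimes H^1_{\crys}(X/W,{E''}^{\vee}\otimes E')$ --- the two arguments are interchangeable. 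In case (5), what you re-derive by hand via the kernel filtration of $\theta$ (that $C^{-1}$ is exact, restricts to Cartier descent on zero Higgs field, hence scales slopes and Chern classes of all subobjects by the expected $p$-powers, so that $(H,\theta)$ is $\mu$-semistable with vanishing Chern classes) is exactly what the paper quotes from \cite[Lem.~2, Cor.~1]{langerhiggs}; and the ``Higgs-module version of Langer's structure theorem'' that you flag as the remaining hard point is not something you need to prove: it is \cite[Thm.~11]{langerhiggs} (local freeness of $\mu$-semistable Higgs sheaves with vanishing Chern classes, available precisely because of the $W_2$-lift assumed in (5)), which is what the paper cites. So no idea is missing; what your write-up needs is these two citations, plus the routine check you already list, that the underlying module of $C^{-1}(H,\theta)$ is locally $F^*H$, so local freeness of $H$ gives that of $E_X$ and hence of $E$.
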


The case (1) reproves a weaker version of 
Proposition~\ref{prop:rk1lattice}. Also, 
when $\mu_{\max}(\Omega_X^1) \leq 0$, 
any convergent isocrystal $\cE$ on $X$ admits 
a locally free crystal $E$ on $(X/W)_{\crys}$ with 
$\cE = \Q \otimes E$. 

\begin{proof}
First we prove the theorem in the cases 
(1), (2), (3) or (4) by induction on the rank of $\cE$. 
When $\cE$ is irreducible, there exists a lattice $E$ of 
$\cE$ such that $E_X \in \Coh(X)$ is strongly $\mu$-semistable, by 
Propositions~\ref{prop:chern}, 
\ref{prop:langton} and \ref{prop:mr}. This, together with 
Proposition \ref{prop:chern}, Theorem~\ref{thm:langer} implies that 
$E_X$ is locally free. Hence $E$ is also locally free. When 
$\cE$ is not irreducible, 
we have an irreducible convergent subisocrystal $\cE' \subsetneq \cE$. 
Put $\cE'' := \cE/\cE'$. 
Then, by induction hypothesis, there exist locally free lattices 
$E', E''$ of $\cE', \cE''$, respectively. Then 
$H^1_{\conv}(X/K, {\cE''}^{\vee} \otimes \cE') = 
\Q \otimes H^1_{\crys}(X/W, {E''}^{\vee} \otimes E')$, and 
from this we see that there exists an extension $E$ 
of $E'$ by $E''$ in $\Crys(X/W)$ with $\cE \cong \Q \otimes E$. 
This $E$ is locally free by construction, and so 
the theorem is true for $\cE$. 

Next we prove the theorem in the case (5). 
By the argument in the previous paragraph, 
we may assume that $\cE$ is irreducible. Using 
Proposition~\ref{prop:langton}, we take a lattice $E$ of 
$\cE$ such that the restriction $(E_X, \nabla)$ of $E$ to 
$\Crys(X/k) = \MIC(X)^{\qn}$ is 
$\mu$-semistable. By assumption, ${\rm rank}\,E_X \leq p$. 
Hence $(E_X, \nabla)$ is contained in $\MIC_{p-1}(X)$. 
So, by \cite{ov}, 
there exists a Higgs module $(H, \theta) \in \HIG_{p-1}(X)$ such that 
$C^{-1}(H, \theta) = (E_X, \nabla)$, where 
$C^{-1}$ is the inverse Cartier transform. By Proposition~\ref{prop:chern}, $E_X$ has vanishing Chern classes. 
From this and the $\mu$-semistability of $(E_X, \nabla)$, we see 
that $(H, \theta)$ is $\mu$-semistable Higgs module with 
vanishing Chern classes, by \cite[Lem.~2, Cor.~1]{langerhiggs}. 
Then, by \cite[Thm.~11]{langerhiggs}, 
$H$ is locally free. Hence so is $E_X$, and then $E$ is locally free. 
\end{proof}

\end{document}